\crefname{assumption}{Assumption}{Assumptions}
\definecolor{verylightgray}{rgb}{0.85,0.85,0.85}
\definecolor{boxred}{rgb}{0.7,0,0}
\newcommand*{\arXiv}[1]{\bgroup\color{blue}\href{https://arxiv.org/abs/#1}{arXiv:#1}\egroup}
\newcommand*{\doi}[1]{\bgroup\color{blue}\href{https://doi.org/#1}{doi:#1}\egroup}
\newcommand*{\email}[1]{\bgroup\color{blue}\href{mailto:#1}{#1}\egroup}
\renewcommand*{\url}[1]{\bgroup\color{blue}\href{#1}{#1}\egroup}
\setlist[enumerate]{nosep}
\setlist[itemize]{nosep}
\renewcommand{\qedsymbol}{$\blacksquare$}
\renewenvironment{proof}[1][\proofname]{\noindent{\bfseries\sffamily #1.} }{\hfill\qedsymbol\medskip}
\let\oldtitle\title
\renewcommand{\title}[1]{\oldtitle{#1}\newcommand{\theshorttitle}{#1}}
\newcommand{\shorttitle}[1]{\renewcommand{\theshorttitle}{#1}}
\let\oldauthor\author
\renewcommand{\author}[1]{\oldauthor{#1}\newcommand{\theshortauthor}{#1}}
\newcommand{\shortauthor}[1]{\renewcommand{\theshortauthor}{#1}}
\newcommand{\theabstract}[1]{\par\bgroup\noindent\textbf{\textsf{Abstract.}} #1\egroup}
\newcommand{\thekeywords}[1]{\par\smallskip\bgroup\noindent\textbf{\textsf{Keywords.}}\newcommand{\and}{ $\bullet$ } #1\egroup}
\newcommand{\themsc}[1]{\par\smallskip\bgroup\noindent\textbf{\textsf{2020 Mathematics Subject Classification.}}\newcommand{\and}{ $\bullet$ } #1\egroup}
\newcommand*{\affilref}[1]{\ref{affiliation#1}}
\newcommand*{\affiliation}[3]{
	\footnotetext[#1]{\label{affiliation#2} #3}
}
\numberwithin{equation}{section}
\numberwithin{figure}{section}
\numberwithin{table}{section}
\newtheorem{theorem}{\sffamily Theorem}[section]
\newtheorem{corollary}[theorem]{\sffamily Corollary}
\newtheorem{proposition}[theorem]{\sffamily Proposition}
\newtheorem{lemma}[theorem]{\sffamily Lemma}
\theoremstyle{definition}
\newtheorem{definition}[theorem]{\sffamily Definition}
\newtheorem{remark}[theorem]{\sffamily Remark}
\newtheorem{example}[theorem]{\sffamily Example}
\newtheorem{assumption}[theorem]{\sffamily Assumption}
\crefname{conjecture}{Conjecture}{Conjectures}
\crefname{question}{Question}{Questions}
\newcommand*{\defeq}{\coloneqq}
\newcommand*{\qefed}{\eqqcolon}
\newcommand*{\defterm}{\textbf}
\DeclareMathOperator{\range}{\mathrm{ran}}
\DeclareMathOperator{\supp}{\mathrm{supp}}
\DeclareMathOperator{\spn}{\mathrm{span}}
\DeclareMathOperator*{\argmin}{arg\,min}
\renewcommand{\emptyset}{\varnothing}
\renewcommand{\geq}{\geqslant}
\renewcommand{\leq}{\leqslant}
\newcommand*{\Gammalim}{\mathop{\ensuremath{\Gamma}\text{-}\mathrm{lim}}}
\newcommand*{\N}{\mathbb{N}}
\newcommand*{\Naturals}{\mathbb{N}}
\newcommand*{\one}{\mathds{1}}
\newcommand*{\quark}{\setbox0\hbox{$x$}\hbox to\wd0{\hss$\cdot$\hss}}
\newcommand*{\R}{\mathbb{R}}
\newcommand*{\Reals}{\mathbb{R}}
\newcommand*{\eReals}{\overline{\Reals}}
\newcommand*{\rd}{\mathrm{d}}
\newcommand*{\Torus}{\mathbb{T}}
\newcommand*{\ud}{\, \rd}
\newcommand*{\Ball}[2]{\bgroup \color{green} B_{#2}(#1) \egroup}
\newcommand*{\cBall}[2]{B_{#2}(#1)}
\newcommand*{\ccBall}[2]{\bar{B}_{#2}(#1)}
\newcommand*{\BallExtra}[3]{\bgroup \color{green} B_{#2}^{#3}(#1) \egroup}
\newcommand*{\cBallExtra}[3]{B_{#2}^{#3}(#1)}
\newcommand*{\oRatio}[1]{R_{#1}}
\newcommand*{\cRatio}[1]{\bar{R}_{#1}}
\newcommand*{\rv}[1]{\boldsymbol{#1}}
\newcommand*{\prob}[1]{\mathcal{P}(#1)}
\newcommand*{\Normal}{\mathcal{N}}
\newcommand{\Borel}[1]{\mathcal{B}(#1)}
\newcommand{\cylindrical}[1]{\mathcal{E}(#1)}
\newcommand{\absval}[1]{\lvert #1 \rvert}
\newcommand{\innerprod}[2]{\langle #1 , #2 \rangle}
\newcommand{\norm}[1]{\lVert #1 \rVert}
\newcommand{\round}[1]{\lfloor #1 \rfloor}
\newcommand{\set}[2]{\{ #1 \mid #2 \}}
\newcommand{\bigabsval}[1]{\bigl\vert #1 \bigr\vert}
\newcommand{\bignorm}[1]{\bigl\Vert #1 \bigr\Vert}
\newcommand{\Absval}[1]{\left\vert #1 \right\vert}
\newcommand{\Set}[2]{\left\{ #1 \,\middle\vert\, #2 \right\}}
\newcommand{\todo}[1]{\bgroup\color{red}TODO: #1\egroup}
\newcommand{\ba}[1]{\bgroup\color{olive}Birzhan: #1\egroup}
\newcommand{\tjs}[1]{\bgroup\color{cyan}Tim: #1\egroup}
\newcommand{\hcl}[1]{\bgroup\color{brown}Han: #1\egroup}
\newcommand{\ik}[1]{\bgroup\color{violet}Ilja: #1\egroup}
\newcommand{\change}[1]{\bgroup\color{olive}#1\egroup}
\DeclareAcronym{BIP}{short=BIP, long=Bayesian inverse problem}
\DeclareAcronym{DFG}{short=DFG, long=Deutsche Forschungs\-gemein\-schaft}
\DeclareAcronym{MAP}{short=MAP, long=maximum a posteriori}
\DeclareAcronym{OM}{short=OM, long=Onsager--Machlup}
\DeclareAcronym{SPSD}{short=SPSD, long=self-adjoint and positive semi-definite}
\renewcommand{\change}[1]{#1}
\title{\texorpdfstring{$\boldsymbol{\mathsf{\Gamma}}$}{Gamma}-convergence of Onsager--Machlup functionals}
\shorttitle{$\mathsf{\Gamma}$-convergence of Onsager--Machlup functionals: Part I}
\author{%
	Birzhan~Ayanbayev\textsuperscript{\affilref{Warwick}}%
	\and
	Ilja~Klebanov\textsuperscript{\affilref{FUB}}%
	\and
	Han Cheng Lie\textsuperscript{\affilref{Potsdam}}%
	\and
	T.~J.~Sullivan\textsuperscript{\affilref{Warwick}}%
}
\begin{document}

\maketitle

\affiliation{1}{Warwick}{Mathematics Institute and School of Engineering, University of Warwick, Coventry, CV4 7AL, United Kingdom (\email{birzhan.ayanbayev@warwick.ac.uk}, \email{t.j.sullivan@warwick.ac.uk})}
\affiliation{2}{FUB}{Freie Universit{\"a}t Berlin, Arnimallee 6, 14195 Berlin, Germany (\email{klebanov@zedat.fu-berlin.de})}
\affiliation{3}{Potsdam}{Institut f\"ur Mathematik, Universit\"at Potsdam, Campus Golm, Haus 9, Karl-Liebknecht-Stra{\ss}e 24--25, Potsdam OT Golm 14476, Germany (\email{hanlie@uni-potsdam.de})}

\begin{abstract}
	\theabstract{The Bayesian solution to a statistical inverse problem can be summarised by a mode of the posterior distribution, i.e.\ a MAP estimator.
The MAP estimator essentially coincides with the (regularised) variational solution to the inverse problem, seen as minimisation of the Onsager--Machlup functional of the posterior measure.
An open problem in the stability analysis of inverse problems is to establish a relationship between the convergence properties of solutions obtained by the variational approach and by the Bayesian approach.
To address this problem, we propose a general convergence theory for modes that is based on the $\Gamma$-convergence of Onsager--Machlup functionals, and apply this theory to Bayesian inverse problems with Gaussian and edge-preserving Besov priors.
Part~II of this paper considers more general prior distributions.
}
	\thekeywords{Bayesian inverse problems%
\and%
$\Gamma$-convergence%
\and%
maximum a posteriori estimation%
\and%
Onsager--Machlup functional%
\and%
small ball probabilities%
\and%
transition path theory%}
	\themsc{49Q20% CALCULUS OF VARIATIONS AND OPTIMAL CONTROL; OPTIMIZATION > Manifolds and measure-geometric topics > Variational problems in a geometric measure-theoretic setting
\and
60B11% PROBABILITY THEORY AND STOCHASTIC PROCESSES > Probability theory on algebraic and topological structures > Probability theory on linear topological spaces
\and
49J45% CALCULUS OF VARIATIONS AND OPTIMAL CONTROL; OPTIMIZATION > Existence theories > Methods involving semicontinuity and convergence; relaxation
\and
49K40% CALCULUS OF VARIATIONS AND OPTIMAL CONTROL; OPTIMIZATION > Optimality conditions > Sensitivity, stability, well-posedness
\and
62F15% STATISTICS > Parametric inference > Bayesian inference
}
\end{abstract}

\section{Introduction}

In diverse applications such as Bayesian inference and the transition path analysis of diffusion processes, it is important to be able to summarise a probability measure $\mu$ on a possibly infinite-dimensional space $X$ by a single distinguished point of $X$ --- a point of maximum probability under $\mu$ in some sense, i.e.\ a \emph{mode} of $\mu$.
If $\mu$ is an absolutely continuous measure on a finite-dimensional Euclidean space $X$, then the modes of $\mu$ are the maximisers of its Lebesgue density.
In the Bayesian statistical context, if $\mu$ is the posterior measure, then the modes of $\mu$ are precisely the \ac{MAP} estimators.
If $X$ is an infinite-dimensional Banach space $X$, then a Lebesgue density is not available.
In this case it has become common to define modes using the posterior probabilities of norm balls in the small-radius limit.
Under suitable conditions, such modes admit a variational characterisation as the minimisers of an appropriate \ac{OM} functional.
Heuristically, such an \ac{OM} functional plays the role of the negative logarithm of the ``Lebesgue density'' of $\mu$, but the rigorous formulation of this relationship requires some care.
In a statistical context, this variational characterisation of modes suggests a connection between the fully Bayesian approach and the regularised variational approach to inverse problems:
the negative logarithm of the prior acts as a regulariser for the misfit (i.e.\ for the negative log-likelihood).

A significant challenge to exploiting this connection is the lack of a suitable convergence theory.
This is because the stability properties of \ac{MAP} estimators are poorly understood.
In particular, it is not known under what circumstances mild perturbations of the setup of a \ac{BIP} lead to mild perturbations of the posterior distribution \emph{and} to mild perturbations of its \ac{MAP} estimators.
\change{Typical examples of perturbations include those arising from finite-dimensional truncation of an infinite-dimensional prior;
numerical approximation of an ideal forward operator within the likelihood, e.g.\ the solution operator of a differential equation;
perturbation of observed data;
or limiting procedures such as small-noise limits.}

In the last decade, beginning with the seminal work of \citet{Stuart2010}, many articles have studied the well-posedness and stability of \ac{BIP}s in function spaces.
However, in general, the stability of the posterior and of its \ac{MAP} estimators are ``orthogonal'' questions:
two posterior probability measures can be arbitrarily close in a strong sense such as Kullback--Leibler (relative entropy) distance and still have \ac{MAP} estimators that are at constant distance from one another;
conversely, even equality of \ac{MAP} estimators says nothing about the similarity of the full posteriors.
The situation vis-a-vis convergence is even less satisfying, as the following examples show:

\begin{example}
	\label{example:MAP_counterexamples}
	\begin{enumerate}[label=(\alph*)]
		\item \label{item:same_MAP_far_apart_KL}
		The normal distributions $\mu^{(1)} \defeq \Normal(0, 1)$ and $\mu^{(\sigma)} \defeq \Normal(0, \sigma^{2})$ on $\Reals$ have the same unique mode at $0$, but can be very far apart in the Kullback--Leibler sense:
		\begin{align}
			\label{eq:same_MAP_far_apart_KL}
			\textup{KL} \bigl( \mu^{(1)} \big\| \mu^{(\sigma)} \bigr) & = \frac{\sigma^{-1} - 1 + \log \sigma}{2} \to + \infty \quad \text{as $\sigma \to 0$ or $\sigma \to +\infty$.}
		\end{align}

		\item \label{item:close_KL_far_apart_MAP}
		In the other direction, fix a large $r \gg 1$ and consider for $t \in (-1, 1)$ the following Gaussian mixture distribution $\mu^{(t)}$ on $\Reals$ with Lebesgue density $\rho^{(t)} \colon \Reals \to [0, \infty)$ illustrated in \Cref{fig:MAP_counterexamples}(\subref{fig:close_KL_far_apart_MAP}) and given by
		\begin{align}
			\label{eq:close_KL_far_apart_MAP}
			\rho^{(t)} (x) \defeq \frac{(1 + t) \exp( - \frac{1}{2} (x - r)^{2} ) + (1 - t) \exp( - \frac{1}{2} (x + r)^{2} )}{2 \sqrt{2 \pi}} .
		\end{align}
		For large values of $r$, $\rho^{(t)}$ has two local maximisers near $r$ and $-r$.
		When $t > 0$, the local maximiser near $r$ is the unique mode;
		when $t < 0$, the local maximiser near $- r$ is the unique mode.
		However, $\mu^{(t)}$ and $\mu^{(-t)}$ are very close in the Kullback--Leibler sense:
		e.g.\ for $r = 5$, $\textup{KL} \bigl( \mu^{(t)} \big\| \mu^{(-t)} \bigr) \approx 10^{5 / 4} t^{9 / 4}$ as $t \to 0$.
		
		One might argue that at least the cluster points as $t \to 0$ of the modes of the measures $\mu^{(t)}$ yield the two modes at $x = \pm r$ of the symmetric Gaussian mixture $\mu^{(0)}$.
		However, even this situation cannot be expected to hold true in general, as the next example shows.
		
		\item \label{item:pointwise_convergent_density}
		For $n \in \Naturals$, let $\mu^{(n)}$ be the probability measure on $\Reals$ with Lebesgue density $\rho^{(n)} \colon \Reals \to [0, \infty)$ illustrated in \Cref{fig:MAP_counterexamples}(\subref{fig:pointwise_convergent_density}) and given by
		\begin{align}
			\label{eq:pointwise_convergent_density}
			\rho^{(n)} (x)
			\defeq
			\frac{ \exp \bigl( - \tfrac{1}{2} (x - 1)^{2} \bigr) + \one [ x \geq  0 ] 4 n^{2} x^{2} \exp ( - n^{2} x^{2} ) }{ \sqrt{2 \pi} + \sqrt{\pi} / n } ,
		\end{align}
		where $\one [P] \defeq 1$ if $P$ is true and $\one [P] \defeq 0$ if $P$ is false.
		The densities converge pointwise but not uniformly to the Gaussian distribution $\mu^{(\infty)} = \Normal(1, 1)$ with density $\rho^{(\infty)} (x) \propto \exp \bigl( - \tfrac{1}{2} (x - 1)^{2} \bigr)$, which has a unique mode at $x = 1$.
		Convergence in the Kullback--Leibler sense also holds, with $\textup{KL} \bigl( \mu^{(\infty)} \big\| \mu^{(n)} \bigr) \approx \tfrac{1}{n}$.
		However, each $\mu^{(n)}$ has a unique mode at approximately $x \approx \tfrac{1}{n}$ and the unique cluster point of this sequence of modes is $x = 0 \neq 1$.
		Thus, even in finite-dimensional settings, pointwise convergence of densities (and hence of \ac{OM} functionals) does not imply convergence of modes.
	\end{enumerate}
\end{example}

\begin{figure}
	\begin{subfigure}{0.5\textwidth}
		\centering
		\includegraphics[height=15em]{./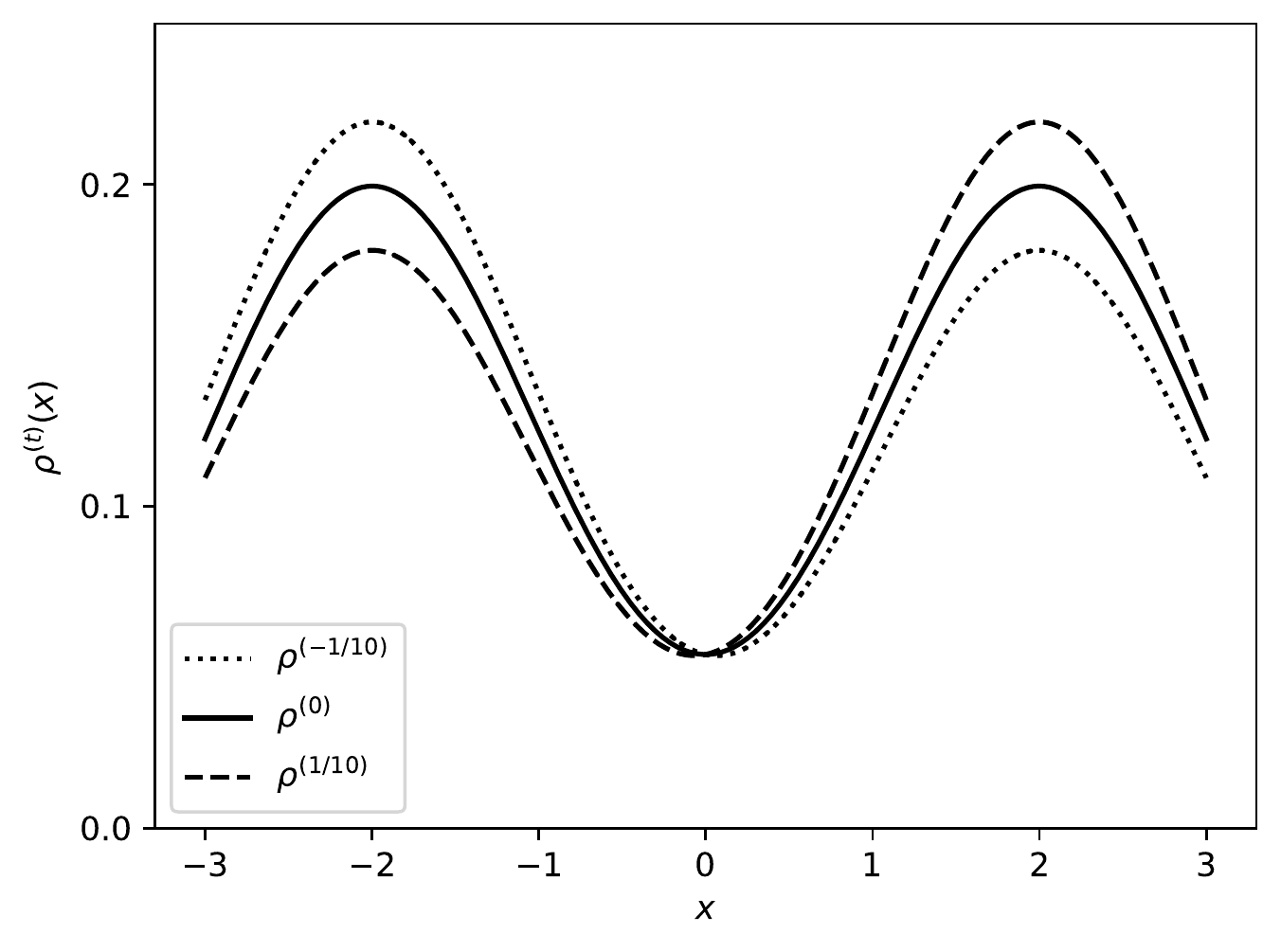}
		\caption{}
		\label{fig:close_KL_far_apart_MAP}
	\end{subfigure}
	\begin{subfigure}{0.5\textwidth}
		\centering
		\includegraphics[height=15em]{./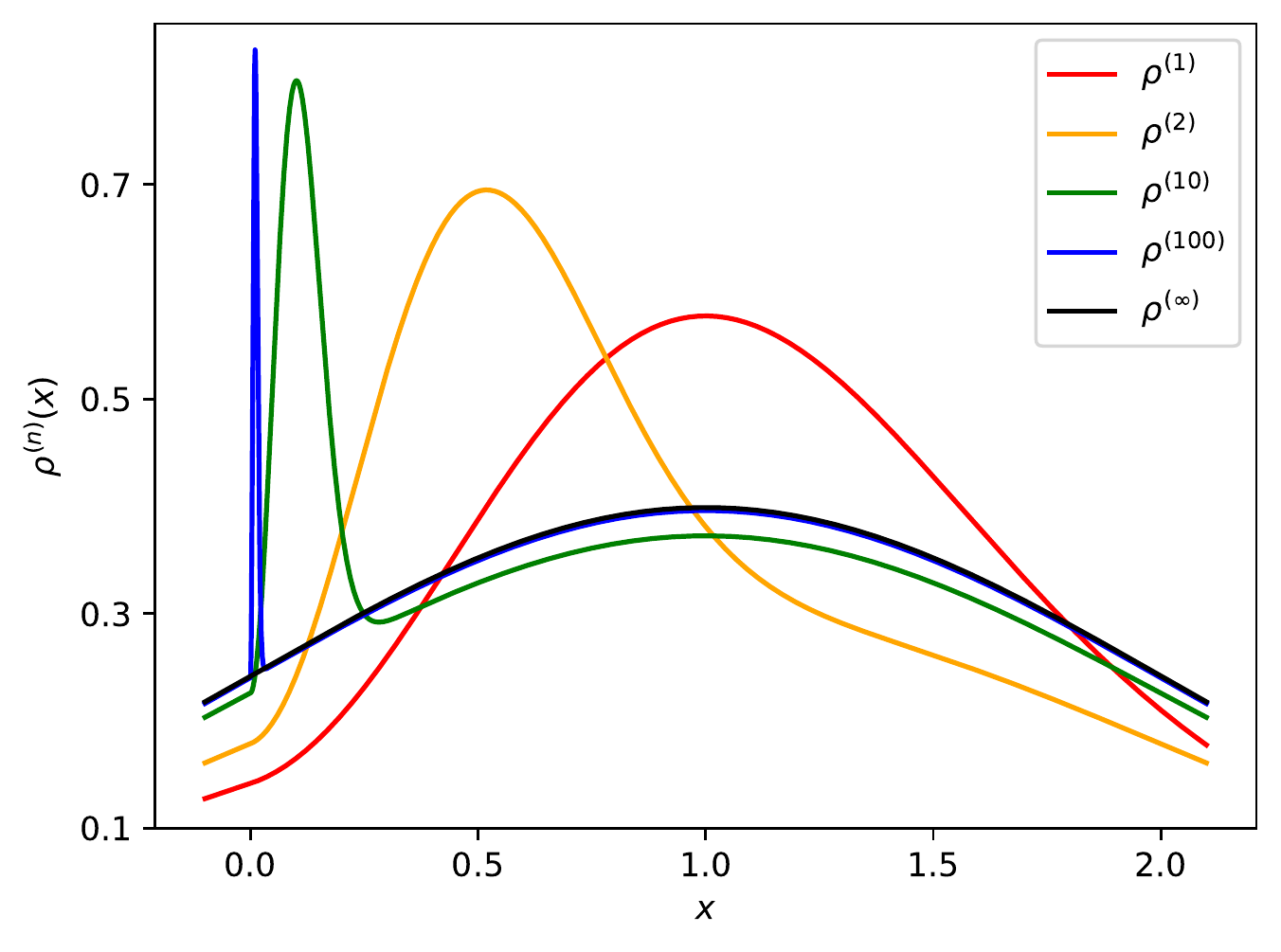}
		\caption{}
		\label{fig:pointwise_convergent_density}
	\end{subfigure}
	\caption{(a)~The densities $\rho^{(t)}$ from \eqref{eq:close_KL_far_apart_MAP} for $t \approx 0$ are close in the Kullback--Leibler sense but their modes are far apart, at $x \approx \pm r = \pm 2$ respectively.
	However, the cluster points of the modes of $\rho^{(t)}$ as $t \to 0$ yield the modes of $\rho^{(0)}$.\newline
	(b)~The densities $\rho^{(n)}$ from \eqref{eq:pointwise_convergent_density} for $n \in \{ 1, 2, 10, 100, \infty \}$.
	For $n \in \Naturals$, the unique mode of $\rho^{(n)}$ is at $x \approx \frac{1}{n}$, whereas the unique mode of $\rho^{(\infty)}$ is at $x = 1$, even though $\rho^{(n)} \to \rho^{(\infty)}$ pointwise.}
	\label{fig:MAP_counterexamples}
\end{figure}

Given the variational characterisation of modes as minimisers of an \ac{OM} functional, it seems natural to assess the convergence (and hence stability) of modes using a notion of convergence that is appropriate for variational problems, i.e.\ one for which the convergence of functionals implies the convergence of minimisers.
The notion of $\Gamma$-convergence, as introduced by De Giorgi and collaborators from the 1970s onwards \citep{DeGiorgi2006}, fulfils exactly this r\^ole and, in particular, overcomes the shortcomings of pointwise convergence as illustrated in \Cref{example:MAP_counterexamples}\ref{item:pointwise_convergent_density}.
Indeed, if the densities in \Cref{example:MAP_counterexamples}\ref{item:pointwise_convergent_density} were uniformly convergent, then they would be continuously convergent and hence $\Gamma$-convergent as well, and the pathological non-convergence of modes would have been avoided. Therefore, the strategy followed by this article consists in the following:
\begin{enumerate}
	\item Formulate the problem of finding modes of probability measures on potentially infinite-dimensional spaces as a variational problem for the associated \ac{OM} functionals; and
	\item Study the $\Gamma$-convergence properties of such problems, in order to obtain criteria for the convergence and stability of such modes.
\end{enumerate}

The remainder of this article is structured as follows.
\Cref{sec:related} gives an overview of related work in the theory of modes for measures on infinite-dimensional spaces and \Cref{sec:notation} sets out some notation and basic results for the rest of the article.
\Cref{sec:modes_and_OM_functionals} explores the correspondence between modes and minimisers of \ac{OM} functionals, and hence demonstrates that $\Gamma$-convergence of \ac{OM} functionals is the correct notion of convergence to ensure convergence of modes.
\Cref{sec:convergence_of_OM_functionals} develops this idea in two prototypical settings, namely Gaussian and Besov-1 measures, which are frequently used as Bayesian prior distributions;
these results can then be transferred to measures that are absolutely continuous with respect to these paradigmatic examples and can be interpreted as the corresponding posterior measures.
More general prior measures, which include Cauchy measures and Besov-$p$ measures for $1\leq p\leq 2$ are treated in Part~II of this paper \citep{AyanbayevKlebanovLieSullivan2021_II}.
In \Cref{sec:MAP_in_BIP}, these ideas are then applied to the convergence and stability of \ac{MAP} estimators for \ac{BIP}s, for which Gaussian and Besov-1 measures are prototypical prior distributions.
Some conclusions and suggestions for further work are given in \Cref{sec:closing}.
Standard definitions and results relating to $\Gamma$-convergence are collected in \Cref{sec:Gamma} and technical supporting results are given in \Cref{sec:technical}.

\section{Overview of related work}
\label{sec:related}

In stochastic analysis and mathematical physics, the interpretation of the minimisers of \acl{OM} functionals over path spaces as most probable paths appears to be due to \citet{DuerrBach1978}.
The \ac{OM} functionals of diffusion processes, and hence the determination of maximum a posteriori paths, have been considered by e.g.\ \citet{Zeitouni1989} and \citet{DemboZeitouni1991}.
It is important to note that simply determining the \ac{OM} functional on some $n$-dimensional approximation space and then taking a limit as $n \to \infty$ can fail to yield the correct \ac{OM} functional as defined in terms of ratios of small ball probabilities by \eqref{eq:Onsager--Machlup}.
This is because the space on which the \ac{OM} functional is finite can be ``smoother'' than the full space on which small ball probabilities are defined \citep{DashtiLawStuartVoss2013}.

Recent years have seen a growing interest in the well-posedness and stability of \ac{BIP}s in function spaces, a perspective proposed by a seminal article of \citet{Stuart2010} that has stimulated many follow-on works and generalisations (e.g.\ \citealp{DashtiHarrisStuart2012, Hosseini2017, Latz2020, Sprungk2020, Sullivan2017}).
There is also complementary theory of discretisation invariance, sometimes referred to as the ``Finnish school'', which in some sense treats the finite-dimensional discrete versions of \ac{BIP}s as the primary objects of interest but pays careful attention to their limiting properties as the discretisation dimension tends to infinity (e.g.\ \citealp{Lehtinen1989, LassasSiltanen2004, LassasSaksmanSiltanen2009, Lasanen2012a, Lasanen2012b}).
However, the robustness studies in these works have focussed on the robustness of the posterior measure in a distributional sense such as the Hellinger, Kullback--Leibler, or Wasserstein sense and, as \Cref{example:MAP_counterexamples} shows, these are insufficient to ensure robustness of modes or \ac{MAP} estimators.
\change{Our results can be seen as contributions to this field in the sense that they establish stability/convergence of \ac{MAP} estimators in a setting that is not limited to finite-dimensional or even linear spaces.}

In the \ac{BIP} context, the definition of a \ac{MAP} estimator as the centre of a norm ball that has maximum posterior probability in a small-radius limit appears to be due to \citet{DashtiLawStuartVoss2013}.
As \citet{DashtiLawStuartVoss2013} note, a similar definition of a \emph{maximal point} was given earlier by \citet{Hegland2007}, but that analysis was implicitly limited to the finite-dimensional setting, since it assumed finiteness of the Cameron--Martin norm.
The context of \citet{DashtiLawStuartVoss2013} was limited to a separable Hilbert space\footnote{\citet{DashtiLawStuartVoss2013} stated their results in separable Banach spaces.
However, as pointed out by \citet{Wacker2020}, their proof techniques are valid only in the case that $X$ is a Hilbert space and not in the Banach space case.} $X$ equipped with a Bayesian posterior measure $\mu$ that was absolutely continuous with respect to a centred non-degenerate Gaussian reference measure $\mu_{0}$.
In this setting, \citet{DashtiLawStuartVoss2013} established the existence of \ac{MAP} estimators and characterised them as the minimisers of the \ac{OM} functional, which they further identified as the sum of the log-likelihood and the \ac{OM} functional of $\mu_{0}$.
When read in the context of a general probability measure on a metric space, rather than the original setting of a Bayesian posterior on a Hilbert space, the definition of \citet{DashtiLawStuartVoss2013} is essentially the definition of a \emph{strong mode} (\Cref{defn:strong_mode}).

The work of \citet{DashtiLawStuartVoss2013} has been extended in multiple ways.
\citet{DunlopStuart2016} proved the connection between \ac{MAP} estimators and \ac{OM} functionals in the setting of piecewise continuous inversion, where the prior is defined in terms of a combination of Gaussian random fields.
Recently, \citet{Kretschmann2019} has corrected some technical deficiencies of \citet{DashtiLawStuartVoss2013}.

The definition of a (strong) \ac{MAP} estimator for $\mu$ given by \citet{DashtiLawStuartVoss2013} was relaxed to that of a \emph{weak \ac{MAP} estimator} by \citet{HelinBurger2015}, in which comparisons between the masses of balls are only performed for balls whose centres differ by an element of a topologically dense subspace $E$ of $X$.
\citet{HelinBurger2015} showed that this weak \ac{MAP} estimator has a close relationship with the zeroes of the logarithmic derivative $\beta_{h}^{\mu} \defeq \frac{\rd (d_{h} \mu)}{\rd \mu}$ of $\mu$, where
\[
	d_{h} \mu (A) \defeq \lim_{t \to 0} \frac{\mu (A + t h) - \mu (A)}{t}
	\quad
	\text{for measurable $A \subseteq X$}
\]
is the Fomin (directional) derivative of $\mu$ in the direction $h \in X$.
The initial analysis of the weak \ac{MAP} estimator relied upon the existence of a continuous representative for $\beta_{h}^{\mu}$, which could not be guaranteed for several important applications, notably the edge-preserving Besov prior with $p = 1$.
By focussing on the Radon--Nikodym derivative $r_{h}^{\mu} \defeq \frac{\rd \mu ( \quark - h )}{\rd \mu}$ instead of $\beta_{h}^{\mu}$, the analysis of \citet{AgapiouBurgerDashtiHelin2018} remedied this shortcoming, posed the definitions and results in more general terms of modes of probability measures rather than \ac{MAP} estimators of Bayesian posteriors, and also considered local (rather than global) strong and weak modes.
The equivalence of strong and weak modes when $E$ is dense in $X$ and under a \emph{uniformity condition} on $\mu$ was established by \citet{LieSullivan2018}, who worked in the more general context of measures on metrisable topological vector spaces.

Finally, we mention that the definition of a strong mode is unsuitable for probability measures with bounded support, and especially to such measures with essential discontinuities in the density.
Examples of such measures include uniform measures on bounded subsets of $X$. The definition is unsuitable because it excludes ``obvious'' modes on the boundary of the support.
The recent \emph{generalised mode} of \citet{Clason2019GeneralizedMI} addresses this deficiency.

\section{Preliminaries and notation}
\label{sec:notation}

\subsection{General notation and assumptions}

Throughout this article, $X$ will denote either a topological space, a metric space, a separable Banach or a Hilbert space.
When thought of as a measurable space, $X$ will be equipped with its Borel $\sigma$-algebra $\Borel{X}$, which is generated by the collection of all open sets.
When $X$ is a metric space, we write $\cBall{x}{r}$ for the open ball in $X$ of radius $r$ centered on $x$.
The set of all probability measures on $(X, \Borel{X})$ will be denoted $\prob{X}$;
we denote typical probability measures by $\mu$, $\mu_{0}$, $\mu^{(n)}$ for $n\in\Naturals \cup \{ \infty \}$ etc.
The topological support of $\mu\in\prob{X}$ defined on a metric space $X$ is
\begin{equation}
	\label{eq:support_of_measure}
	\supp (\mu) \defeq \set{ x \in X }{ \text{for all $r > 0$, } \mu ( \cBall{x}{r} ) > 0 } ,
\end{equation}
which is always a closed subset of $X$.

We write $\eReals$ for the extended real line $\Reals \cup \{ \pm \infty \}$, i.e.\ the two-point compactification of $\Reals$.

For $0 < p \leq \infty$, we write $\ell^{p} \defeq \ell^{p}(\Naturals)$ for the real sequence space of $p$\textsuperscript{th}-power summable sequences, and bounded sequences in the case $p=\infty$.
Furthermore, given $\gamma=(\gamma_{k})_{k \in \Naturals} \in \Reals_{> 0}^{\Naturals}$, we write
\begin{equation}
	\label{eq:gamma_weighted_sequence_space}
	\ell^{p}_{\gamma}
	\defeq
	\Set{ h \in \Reals^{\Naturals} }{ ( h_{k} / \gamma_{k} )_{k \in \Naturals} \in \ell^{p} },
	\qquad
	\norm{ h }_{\ell^{p}_{\gamma}}
	\defeq
	\bignorm{ ( h_{k} / \gamma_{k} )_{k \in \Naturals}}_{\ell^{p}}
\end{equation}
for the corresponding weighted $\ell^{p}$ space.
It is well known that $\norm{ \quark }_{\ell^{p}}$ (and hence $\norm{ \quark }_{\ell_{\gamma}^{p}}$) is a complete quasinorm when $p>0$, a Banach norm when $p \geq 1$, and a Hilbert norm when $p = 2$.

\subsection{Onsager--Machlup functionals}
\label{ssec:OM_functionals}

We recall here the definition of an \acl{OM} functional for a measure. The minimisers of \ac{OM} functionals will turn out to be the modes of the measure (see \Cref{sec:modes_and_OM_functionals}).
We also stress a property that is already implicitly used without a name in the modes literature, one that essentially ensures that the modes must lie in the domain of the \ac{OM} functional.

\begin{definition}
	\label{defn:Onsager--Machlup}
	Let $X$ be a metric space and let $\mu \in \prob{X}$.
	We say that $I = I_{\mu} = I_{\mu, E} \colon E \to \Reals$, with\footnote{\label{footnote:support_avoids_0/0}We insist on defining the real-valued version of $I$ only on a subset of $\supp (\mu)$ since, for $x_{2} \notin \supp (\mu)$, the ratio in the limit in \eqref{eq:Onsager--Machlup} is infinite or even undefined for small enough $r > 0$.} $\emptyset\neq E \subseteq \supp(\mu) \subseteq X$, is an \defterm{Onsager--Machlup functional} (\ac{OM} functional) for $\mu$ if
	\begin{equation}
		\label{eq:Onsager--Machlup}
		\lim_{r \searrow 0} \frac{ \mu ( \cBall{x_{1}}{r} ) }{ \mu ( \cBall{x_{2}}{r} ) }
		=
		\exp ( I(x_{2}) - I(x_{1}) )
		\text{ for all $x_{1}, x_{2} \in E$.}
	\end{equation}
	We say that \defterm{property $M(\mu, E)$} is satisfied if, for some $x^{\star} \in E$,
	\begin{equation}
		\label{eq:liminf_small_ball_prob}
		x \in X \setminus E \implies \lim_{r \searrow 0} \frac{ \mu ( \cBall{x}{r} ) }{ \mu ( \cBall{x^{\star}}{r} ) } = 0 ,
	\end{equation}
	and in this situation we extend $I$ to a function $I \colon X \to \eReals$ with $I(x) \defeq + \infty$ for $x \in X \setminus E$.
\end{definition}

\begin{remark}[The $M$-property]%
	\Cref{lem:vanishing_small_ball_prob} establishes some basic facts about property $M(\mu, E)$.
	In particular, \Cref{lem:vanishing_small_ball_prob}\ref{item:vanishing_small_ball_prob_xstar_or_xstarstar} shows that property $M (\mu, E)$ does not depend on the choice of reference point $x^{\star} \in E$, provided that $\mu$ has an \ac{OM} functional on $E$.

	Every measure $\mu$ admits an \ac{OM} functional if $E$ is taken to be small enough, e.g.\ a singleton subset of $\supp (\mu)$.
	Therefore, there is a natural desire to have $E$ be ``maximal'' in some sense.
	Property $M(\mu, E)$ means that the set $E$ is the ``maximal'' set on which the \ac{OM} functional assumes finite values.
	
	It is tempting but incorrect to read property $M(\mu, E)$ as saying that $\mu$ somehow concentrates upon $E$.
	A straightforward counterexample is given by any non-degenerate Gaussian measure $\mu$ with infinite-dimensional Cameron--Martin space $H(\mu)$, \change{such as the law $\mu$ of standard Brownian motion on $X = C([0, 1]; \Reals)$ with $H(\mu) = H^{1}([0, 1]; \Reals)$.
	In this situation,} property $M(\mu, H(\mu))$ holds \citep[Lemma~3.7]{DashtiLawStuartVoss2013} and yet $\mu(H(\mu)) = 0$ \citep[Theorem~2.4.7]{Bogachev1998Gaussian}.
	Rather, the purpose of property $M(\mu, E)$ is to ensure that the global weak modes (see \Cref{defn:global_weak_mode}) of $\mu$ lie in $E$ and are precisely the minimisers of its extended \ac{OM} functional (see \Cref{prop:weak_mode_OM_minimiser}).
	Furthermore, it is essentially the $\limsup$ part of the limit in \eqref{eq:liminf_small_ball_prob} that ensures this;
	\Cref{example:Minimiser_of_OM_not_weak_mode} shows that if we weaken \eqref{eq:liminf_small_ball_prob} by considering the limit inferior instead of the limit, then --- even for very simple choices of $E$ --- the desired correspondence may break down.
\end{remark}

\begin{remark}[Topological considerations]%
	Note that in defining the \ac{OM} functional here and various notions of mode / \ac{MAP} estimator later on, we use \emph{open} balls (following e.g.\ \citet{DashtiLawStuartVoss2013} and \citet{AgapiouBurgerDashtiHelin2018}) rather than closed balls $\ccBall{x}{\varepsilon}$ (following e.g.\ \citet{Bogachev1998Gaussian}).
	However, \Cref{prop:Open_Closed_Coincide} shows that these two notions yield the same definition of \ac{OM} functionals and global weak modes.
\end{remark}

\begin{remark}[Uniqueness of \ac{OM} functionals]%
	\label{remark:UniquenessOMfunctionals}%
	Note that \ac{OM} functionals are at best unique up to the addition of real constants.
	Whenever we talk about $\Gamma$-convergence and equicoercivity of sequences of \ac{OM} functionals, which are at the core of this work, we always mean the existence of representatives that fulfil these properties.
	Further, whenever we apply results that require both $\Gamma$-convergence and equicoercivity (such as \Cref{thm:fundamental_Gamma}), we need to make sure that the \emph{same} representatives can be chosen for both properties.
\end{remark}

\begin{remark}[\ac{OM} functionals and changes of metric]%
	\label{remark:OM_and_change_of_metric}%
	Unfortunately, the choice of metric on a space $X$ can affect the \ac{OM} functional of a measure $\mu$ on $X$, even beyond the non-uniqueness alluded to in \Cref{remark:UniquenessOMfunctionals}, and even if the two metrics are Lipschitz equivalent.
	An explicit example of this is furnished by the finite measure $\mu$ of \citet[Example~5.6]{LieSullivan2018};
	see \Cref{example:OM_and_change_of_metric} for details.
\end{remark}

\subsection{Modes and MAP estimators}

In finite-dimensional spaces, for probability measures that are either purely discrete or possess a continuous Lebesgue density, modes (as points of maximum probability) are easily defined as being global maximisers of the probability mass function or probability density function as appropriate.
For probability measures on infinite-dimensional spaces, however, the situation is more delicate as there is no infinite-dimensional analogue of Lebesgue measure to serve as a uniform reference.
Therefore, it has become common to define modes by examining the masses of norm balls in the small-radius limit.
The following definition of a \emph{strong mode} is a slight generalisation of the definition of a \emph{\ac{MAP} estimator} for a Bayesian posterior measure on a normed space as given by \citet{DashtiLawStuartVoss2013}.

\begin{definition}
	\label{defn:strong_mode}
	Let $X$ be a metric space.
	A \defterm{strong mode} of $\mu \in \prob{X}$ is any $u \in X$ satisfying
	\begin{equation}
		\label{eq:strong_mode}
		\lim_{r \searrow 0} \frac{\mu(\cBall{u}{r})}{M_{r}}=1 ,
	\end{equation}
	where $M_{r} \defeq \sup_{w\in X} \mu(\cBall{w}{r}) \in (0, 1]$.
	Since $\mu(\cBall{u}{r}) \leq M_{r}$, the ratio inside the limit in \eqref{eq:strong_mode} is at most one, and so it is equivalent to define a strong mode as being any $u \in X$ for which
	\[
		\lim_{r \searrow 0} \frac{\mu(\cBall{u}{r})}{M_{r}} \geq 1
		\text{ or }
		\liminf_{r \searrow 0} \frac{\mu(\cBall{u}{r})}{M_{r}} \geq 1
		\text{ or }
		\limsup_{r \searrow 0} \frac{M_{r}}{\mu(\cBall{u}{r})} \leq 1
		.
	\]
\end{definition}

A related notion of mode for a measure is the \emph{weak mode} or \emph{weak \ac{MAP} estimator} \citep[Definition 4]{HelinBurger2015}, which are points that dominate all other points within an affine subspace, in terms of small ball probabilities.
Since we are only interested in \emph{global} weak modes, we simplify the definition slightly and at the same time generalise this concept to metric spaces. The original definition relies on subtraction and thus only applies in the case of linear spaces.

\begin{definition}
	\label{defn:global_weak_mode}
	For a metric space $X$, a \defterm{global weak mode} of $\mu \in \prob{X}$ is any $u \in \supp (\mu)$ satisfying, for any point $u' \in X$,
	\begin{equation}
		\label{eq:global_weak_mode}
		\limsup_{r \searrow 0} \frac{\mu (\cBall{u'}{r})}{\mu (\cBall{u}{r})} \leq 1 .
	\end{equation}
\end{definition}

\begin{remark}
	\label{rem:limsup_vs_lim}
\change{The definition \eqref{eq:global_weak_mode} differs further from that of \citet[Definition~4]{HelinBurger2015} in that we use a limit superior instead of a limit.
We suspect that this is the way it was intended to be defined --- if a point dominates every other point in terms of small ball probabilities (in the sense that the ratio in \eqref{eq:global_weak_mode} becomes $\leq 1$ for sufficiently small $r > 0$), it should be called a weak mode.
This suspicion is based on \citet[Lemma 3]{HelinBurger2015}, where the authors prove that every strong mode is a weak mode, which is clearly a desirable property given the terminology ``strong'' and ``weak'', but their proof is incorrect given their original definition.
The reason is that the ratio in \eqref{eq:global_weak_mode} can drop below $\leq 1$ as $r\searrow 0$ for certain points $u,u'\in X$ without the limit existing (e.g.\ it might oscillate between $0$ and $\tfrac{1}{2}$), such that $u$ cannot be a weak mode in the original definition, while it can still be a strong mode.
\cref{example:Minimiser_of_OM_not_weak_mode} shows that such oscillations can occur and a slight modification provides a concrete counterexample to \citet[Lemma 3]{HelinBurger2015}.
}
\end{remark}

\begin{lemma}
	\label{lem:strong_mode_implies_global_weak_mode}
	Let $\mu\in \prob{X}$ and let $u\in\supp(\mu)$ be a strong mode of $\mu$. Then $u$ is a global weak mode of $\mu$.
\end{lemma}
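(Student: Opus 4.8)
The plan is to read off the conclusion almost directly from the two definitions, using the crucial fact that $M_{r}$ is a supremum over \emph{all} ball centres. First I would record the well-definedness of the relevant ratios: since $u \in \supp(\mu)$, the defining property \eqref{eq:support_of_measure} of the topological support gives $\mu(\cBall{u}{r}) > 0$ for every $r > 0$, so the quotients $\mu(\cBall{u'}{r})/\mu(\cBall{u}{r})$ and $M_{r}/\mu(\cBall{u}{r})$ appearing below are all well defined.

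Next I would fix an arbitrary competitor $u' \in X$ and observe the elementary bound at the heart of the argument: for every $r > 0$, the ball $\cBall{u'}{r}$ is one of the balls over which the supremum defining $M_{r} = \sup_{w \in X} \mu(\cBall{w}{r})$ is taken, so $\mu(\cBall{u'}{r}) \leq M_{r}$. Dividing by the strictly positive quantity $\mu(\cBall{u}{r})$ yields, for every $r > 0$,
\[
	\frac{\mu(\cBall{u'}{r})}{\mu(\cBall{u}{r})} \leq \frac{M_{r}}{\mu(\cBall{u}{r})} .
\]

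Finally I would convert the strong-mode hypothesis into a statement about the right-hand side. By \eqref{eq:strong_mode} we have $\mu(\cBall{u}{r})/M_{r} \to 1$ as $r \searrow 0$; since this limit is a strictly positive real number, the reciprocal satisfies $M_{r}/\mu(\cBall{u}{r}) \to 1$ as well. Taking the limit superior in the displayed inequality then gives
\[
	\limsup_{r \searrow 0} \frac{\mu(\cBall{u'}{r})}{\mu(\cBall{u}{r})} \leq \lim_{r \searrow 0} \frac{M_{r}}{\mu(\cBall{u}{r})} = 1 .
\]
As $u' \in X$ was arbitrary, this is exactly the defining inequality \eqref{eq:global_weak_mode} of a global weak mode, and $u \in \supp(\mu)$ holds by hypothesis, so $u$ is a global weak mode.

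I do not expect a genuine obstacle here: the statement is essentially immediate once one notices that $\mu(\cBall{u'}{r}) \leq M_{r}$ for \emph{every} centre $u'$, which is precisely what lets a strong mode dominate all competitors. The only points requiring a little care are the two bookkeeping issues flagged above — ensuring the denominator is positive (via $u \in \supp(\mu)$) so the ratios make sense, and correctly inverting the strong-mode limit to control $M_{r}/\mu(\cBall{u}{r})$ rather than its reciprocal. This also makes transparent why the terminology is consistent: every strong mode is a global weak mode.
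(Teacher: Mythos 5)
Your proof is correct and follows essentially the same route as the paper's: bound $\mu(\cBall{u'}{r})$ by $M_{r}$, divide by the positive quantity $\mu(\cBall{u}{r})$, and invert the strong-mode limit to conclude the limit superior is at most $1$. The paper's proof is a one-line version of exactly this chain of inequalities; your additional remarks on well-definedness of the ratios and on inverting the limit are correct bookkeeping that the paper leaves implicit.
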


\begin{proof}
	Since $u\in \supp(\mu)$ is a strong mode of $\mu$, we obtain, for any point $u' \in X$,
	\[
		\limsup_{r \searrow 0} \frac{\mu (\cBall{u'}{r})}{\mu (\cBall{u}{r})}
		\leq
		\limsup_{r \searrow 0} \frac{M_{r}}{\mu (\cBall{u}{r})}
		=
		\lim_{r \searrow 0} \frac{M_{r}}{\mu (\cBall{u}{r})}
		=
		1.
	\]
\end{proof}

Sufficient conditions for the converse implication are given by \citet{LieSullivan2018}.
The relationship between modes and \ac{OM} functionals will be examined in \Cref{sec:modes_and_OM_functionals}.

\subsection{Generalised inverses}

We adopt the following definition of the Moore--Penrose pseudoinverse of an operator \citep[Definition~2.2]{EnglHankeNeubauer1996}:

\begin{definition}
	\label{def:pseudo_inverse_operator}
	For a bounded linear operator $A \colon X \to Y$ between Hilbert spaces $X$ and $Y$, the \defterm{Moore--Penrose pseudoinverse} $A^{\dagger}$ of $A$ is the unique extension of $(A|_{(\ker A)^{\perp}})^{-1}$ to a (generally unbounded) linear operator $A^{\dagger} \colon \range A \oplus (\range A)^{\perp} \to X$ subject to the restriction that $\ker A^{\dagger} = (\range A)^{\perp}$.
\end{definition}

\begin{remark}
    \label{rem:minimum_norm_properties_pseudoinverse}
	For $y \in \range A \oplus (\range A)^{\perp}$,
	\[
		A^{\dagger} y
		=
		\argmin \Set{ \norm{ x }_{X} }{ \vphantom{\big|}x \text{ minimises } \norm{ A x - y } } .
	\]
	In particular, for $y \in \range A$, $A^{\dagger} y$ is the minimum-norm solution of $A x = y$ \citep[Theorem~2.5]{EnglHankeNeubauer1996}.
\end{remark}

\begin{remark}
	\label{remark:SquareRootOfSPSDOperatorAndItsPseudoInverse}
	For a \ac{SPSD} and compact operator $C = \sum_{n \in \Naturals} \sigma_{n}^{2}\, e_{n} \otimes e_{n}\colon X \to X$ on a Hilbert space $X$, $(e_{n})_{n\in\Naturals}$ being an orthonormal system in $X$ and $\sigma_{n} \geq 0$ for each $n\in\Naturals$, we denote the \ac{SPSD} operator square root of $C$ by $C^{1/2}$ and furthermore set
	\[
		C^{\dagger/2}
		\defeq
		(C^{1/2})^{\dagger}
		=
		\sum_{n \in \Naturals\, : \, \sigma_{n} \neq 0} \sigma_{n}^{-1} \, e_{n} \otimes e_{n} .
	\]
	Note that $(C^{\dagger})^{1/2}$ can differ from $(C^{1/2})^{\dagger}$ since it may have a smaller domain.
\end{remark}

\section{Modes, Onsager--Machlup functionals, and their convergence}
\label{sec:modes_and_OM_functionals}

The purpose of this section is to firmly establish the intuitively plausible relationship between the modes of a probability measure $\mu$ and its \ac{OM} functional $I$, namely that the global weak modes of $\mu$ are exactly the global minimisers of $I$.
Once this is done, it is a relatively simple matter to give sufficient conditions for the global weak modes of a sequence of measures to converge to the global weak modes of a limiting measure:
$\Gamma$-convergence and equicoercivity of the associated \ac{OM} functionals.

\begin{proposition}[Global weak modes and \ac{OM} functionals]
	\label{prop:weak_mode_OM_minimiser}
	Let $X$ be a metric space and let $I \colon E \to \Reals$ be an \ac{OM} functional for $\mu \in \prob{X}$, defined on a nonempty subset $E \subseteq X$ with property $M(\mu,E)$.
    Then $u \in E$ is a global weak mode of $\mu$ if and only if $u$ is a minimiser of the extended \ac{OM} functional $I\colon X \to \eReals$.
\end{proposition}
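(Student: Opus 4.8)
The plan is to unwind both definitions and observe that the equivalence reduces to comparing ratios of small-ball probabilities, split according to whether the competing point lies in $E$ or not. Note first that since $E \subseteq \supp(\mu)$ by the standing hypothesis on \ac{OM} functionals, every $u \in E$ automatically meets the support requirement in \Cref{defn:global_weak_mode}, so that membership is never at issue. The only genuine content is the interplay between the defining limit \eqref{eq:Onsager--Machlup} of the \ac{OM} functional --- which supplies an honest \emph{limit} of ratios, not merely a limit superior --- and the limit in \eqref{eq:liminf_small_ball_prob} furnished by property $M(\mu, E)$.

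For the direction that a minimiser $u \in E$ of the extended functional is a global weak mode, I would fix an arbitrary $u' \in X$ and estimate $\limsup_{r \searrow 0} \mu(\cBall{u'}{r}) / \mu(\cBall{u}{r})$. If $u' \in E$, then \eqref{eq:Onsager--Machlup} turns this ratio into a genuine limit equal to $\exp(I(u) - I(u'))$; since $u$ minimises $I$ we have $I(u) \leq I(u')$, so the limit is $\leq 1$. If instead $u' \in X \setminus E$, I would factor
\[
	\frac{\mu(\cBall{u'}{r})}{\mu(\cBall{u}{r})}
	=
	\frac{\mu(\cBall{u'}{r})}{\mu(\cBall{x^{\star}}{r})} \cdot \frac{\mu(\cBall{x^{\star}}{r})}{\mu(\cBall{u}{r})} ,
\]
where $x^{\star} \in E$ is the reference point from property $M(\mu, E)$. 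The first factor tends to $0$ by \eqref{eq:liminf_small_ball_prob}, while the second tends to the finite positive constant $\exp(I(u) - I(x^{\star}))$ by \eqref{eq:Onsager--Machlup}; hence the product tends to $0 \leq 1$. In both cases the weak-mode inequality \eqref{eq:global_weak_mode} holds, so $u$ is a global weak mode.

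For the converse, I would suppose $u \in E$ is a global weak mode and show $I(u) \leq I(x)$ for every $x \in X$. For $x \in X \setminus E$ this is immediate from $I(x) = +\infty$. For $x \in E$, the limit \eqref{eq:Onsager--Machlup} again upgrades the limit superior in \eqref{eq:global_weak_mode} to an actual limit, yielding $\exp(I(u) - I(x)) \leq 1$ and hence $I(u) \leq I(x)$ upon taking logarithms. Thus $u$ minimises the extended functional.

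The computations themselves are routine; the one step requiring care --- and the conceptual heart of the statement --- is the treatment of competitors $u' \notin E$ in the first direction. This is precisely where property $M(\mu, E)$ is indispensable: without the vanishing-ratio (equivalently, the $\limsup$) content of \eqref{eq:liminf_small_ball_prob}, a point outside $E$ could in principle dominate $u$ and destroy the weak-mode property, exactly as foreshadowed in the remark following \Cref{defn:Onsager--Machlup}. I would also remark that the argument does not depend on which $x^{\star}$ is used, since \Cref{lem:vanishing_small_ball_prob} guarantees that property $M(\mu, E)$ is independent of the reference point once an \ac{OM} functional on $E$ exists.
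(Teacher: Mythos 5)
Your proof is correct and follows essentially the same route as the paper's: both split the competing point $u'$ according to whether it lies in $E$ (using the genuine limit from \eqref{eq:Onsager--Machlup}) or in $X \setminus E$ (using property $M(\mu,E)$ to force the ratio to $0$), and then translate the resulting inequality $\exp(I(u)-I(u')) \leq 1$ into $I(u) \leq I(u')$. The only cosmetic difference is that you inline the factorisation through $x^{\star}$ showing that the reference point is immaterial, whereas the paper delegates this to \Cref{lem:vanishing_small_ball_prob} and presents the argument as a single chain of equivalences.
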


\begin{proof}
	By property $M(\mu,E)$ and \Cref{lem:vanishing_small_ball_prob}\ref{item:vanishing_small_ball_prob_not_weak_mode}, any global weak mode of $\mu$ must lie in $E$, and in addition any minimiser of $I\colon X \to \eReals$ must also lie in $E$, where $E$ is the set on which $I$ takes real values.
	Let $u\in E$ be arbitrary.
	Then $\lim_{r \searrow 0} \frac{\mu(\cBall{u'}{r})}{\mu(\cBall{u}{r})}$ exists for any $u' \in E$ by definition of $I$ being the \ac{OM} functional, and the same limit exists and equals 0 for $u' \in X\setminus E$, because property $M(\mu,E)$ holds.
	Thus,
	\begin{align*}
		u \in E \text{ is a global weak mode}
		& \iff
		\text{for all $u' \in X$, } \limsup_{r \searrow 0} \frac{\mu(\cBall{u'}{r})}{\mu(\cBall{u}{r})} \leq 1 \\
		& \iff
		\text{for all $u' \in X$, } \lim_{r \searrow 0} \frac{\mu(\cBall{u'}{r})}{\mu(\cBall{u}{r})} \leq 1 \\
		& \iff
		\text{for all $u' \in X$, } \exp (I(u)-I(u')) \leq 1 \\
		& \iff
		\text{for all $u' \in X$, } I(u) \leq I(u') ,
	\end{align*}
	as claimed.
\end{proof}

Property $M(\mu,E)$ was essential in the above argument in order for points outside $E$ to be treated in a consistent way.
Recall that, in \cref{defn:Onsager--Machlup}, for a given $\mu \in \prob{X}$, we initially defined the \ac{OM} functional of $\mu$ to be a function $I \colon E \to \Reals$.
Only under property $M(\mu,E)$ can we sensibly extend $I$ to a $\eReals$-valued function on $X$ by setting $I(x) \defeq +\infty$ for $x\in X\setminus E$.
The motivation for this extension is that, by \Cref{lem:vanishing_small_ball_prob}\ref{item:vanishing_small_ball_prob_not_weak_mode}, no point of $X \setminus E$ can be a global weak mode for $\mu$, and hence cannot be a strong mode for $\mu$.

Unfortunately, without additional assumptions, an analogous result to	\Cref{prop:weak_mode_OM_minimiser} cannot hold for strong modes, as demonstrated in \Cref{example:Minimiser_of_OM_not_strong_mode}.
\change{The main idea behind the measure $\mu$ constructed therein is that $u=1$ ``dominates'' any other (fixed) point $u' \in X = \Reals$ in the limit $r \searrow 0$, i.e.\ $\lim_{r \searrow 0} \frac{\mu(\cBall{u'}{r})}{\mu(\cBall{u}{r})} \leq 1$, hence $u=1$ is a global weak mode; but for certain arbitrarily small radii $r_{n}$, $n\in\Naturals$, there exist points $u_{n} \in X$ that ``dominate'' $u$ by a margin, in fact $\liminf_{r \searrow 0} \frac{\mu(\cBall{u}{r})}{M_{r}} \leq \tfrac{1}{\sqrt{2}}$, hence $u=1$ cannot be a strong mode.
Moreover, for $E = \Naturals \subseteq X$, property $M(\mu,E)$ holds and an \ac{OM} functional $I_{\mu,E}\colon E \to \Reals$ exists and has $u=1$ as its minimizer.
The construction is based on suitably chosen singularities of the Lebesgue density $\rho$ of $\mu$.
}

The following result, which is an almost immediate consequence of the preceding discussion, provides clear criteria for the convergence of global weak modes along sequences of probability measures.
(Definitions and basic properties of $\Gamma$-convergence, equicoercivity, etc.\ are collected in \Cref{sec:Gamma}.)

\begin{theorem}[$\mathsf{\Gamma}$-convergence and equicoercivity imply convergence of modes]
	\label{thm:convergence_of_modes}
	Let $X$ be a metric space and let, for $n \in \Naturals \cup \{ \infty \}$, $\mu^{(n)} \in \prob{X}$ have \ac{OM} functionals $I^{(n)} \colon E^{(n)} \to \Reals$, such that property $M(\mu^{(n)},E^{(n)})$ is satisfied.
	Extend each $I^{(n)}$ to take the value $+ \infty$ on $X \setminus E^{(n)}$.
	Suppose that the sequence $(I^{(n)})_{n \in \Naturals}$ is equicoercive and $\Gamma$-converges to $I^{(\infty)}$.
	Then, if $u^{(n)}$ is a global weak mode of $\mu^{(n)}$, $n\in\Naturals$, every convergent subsequence of $(u^{(n)})_{n \in \Naturals}$ has as its limit a global weak mode of $\mu^{(\infty)}$.
\end{theorem}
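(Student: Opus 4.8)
The plan is to recognise this theorem as an almost immediate combination of two ingredients that are already available: the variational characterisation of global weak modes furnished by \Cref{prop:weak_mode_OM_minimiser}, and the fundamental theorem of $\Gamma$-convergence (\Cref{thm:fundamental_Gamma}), which guarantees that, under equicoercivity, cluster points of minimisers of a $\Gamma$-convergent sequence of functionals are minimisers of the $\Gamma$-limit. The entire argument is thus a translation: convert ``global weak mode'' into ``minimiser of the extended \ac{OM} functional'', run the $\Gamma$-convergence machinery at the level of functionals, and then convert back.

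First I would apply \Cref{prop:weak_mode_OM_minimiser} to each $\mu^{(n)}$, $n \in \Naturals$. Since $\mu^{(n)}$ possesses the \ac{OM} functional $I^{(n)}$ on $E^{(n)}$ and property $M(\mu^{(n)}, E^{(n)})$ holds, the hypothesis that $u^{(n)}$ is a global weak mode of $\mu^{(n)}$ is \emph{equivalent} to $u^{(n)}$ being a global minimiser of the extended functional $I^{(n)} \colon X \to \eReals$. Hence $(u^{(n)})_{n \in \Naturals}$ is precisely a sequence of minimisers of the functionals $I^{(n)}$, and no separate existence argument is needed, as the $u^{(n)}$ are handed to us. Next, because $(I^{(n)})_{n \in \Naturals}$ is equicoercive and $\Gamma$-converges to $I^{(\infty)}$, the fundamental theorem of $\Gamma$-convergence applies: the limit $u$ of any convergent subsequence of $(u^{(n)})_{n \in \Naturals}$ is a global minimiser of $I^{(\infty)}$. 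Finally, applying \Cref{prop:weak_mode_OM_minimiser} once more --- this time to $\mu^{(\infty)}$, which by hypothesis has \ac{OM} functional $I^{(\infty)}$ on $E^{(\infty)}$ with property $M(\mu^{(\infty)}, E^{(\infty)})$ --- a global minimiser of the extended $I^{(\infty)}$ is exactly a global weak mode of $\mu^{(\infty)}$, which is the claim.

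The one point genuinely requiring care, and the step I would treat as the main (if modest) obstacle, is the consistency of representatives flagged in \Cref{remark:UniquenessOMfunctionals}: \ac{OM} functionals are determined only up to an additive constant, yet the fundamental theorem must be invoked for one fixed representative of each $I^{(n)}$ and for the matching $I^{(\infty)}$. Fortunately this causes no real difficulty. On the one hand, the equicoercivity-and-$\Gamma$-convergence hypothesis is stated for the \emph{same} family $(I^{(n)})$, so the appropriate representatives are supplied simultaneously and can be used throughout. On the other hand, \Cref{prop:weak_mode_OM_minimiser} is insensitive to additive constants: shifting any $I^{(n)}$ by a real constant alters neither its set of minimisers nor the defining inequality for a weak mode. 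The two translations at the start and end of the argument therefore remain valid for whatever representatives the $\Gamma$-convergence hypothesis provides, and the proof goes through verbatim.
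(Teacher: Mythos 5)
Your proposal is correct and follows essentially the same route as the paper's own proof: identify global weak modes with minimisers of the extended \ac{OM} functionals via \Cref{prop:weak_mode_OM_minimiser}, then apply the fundamental theorem of $\Gamma$-convergence (\Cref{thm:fundamental_Gamma}) to the sequence of minimisers. Your additional care regarding the choice of representatives (cf.\ \Cref{remark:UniquenessOMfunctionals}) is a sound observation that the paper leaves implicit, but it does not change the argument.
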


\begin{proof}
	By \Cref{prop:weak_mode_OM_minimiser}, the global weak modes of $\mu^{(n)}$ are precisely the minimisers of the extended version of $I^{(n)}$, $n \in \Naturals \cup \{ \infty \}$.
	The rest follows immediately from the fundamental theorem of $\Gamma$-convergence (\Cref{thm:fundamental_Gamma}).
\end{proof}

\change{It is instructive to reconsider the earlier \Cref{example:MAP_counterexamples}\ref{item:pointwise_convergent_density} in light of \Cref{thm:convergence_of_modes}.
The problem in \Cref{example:MAP_counterexamples}\ref{item:pointwise_convergent_density} --- in which the unique modes of the measures $\mu^{(n)}$ fail to cluster at the unique mode of the limiting measure $\mu^{(\infty)}$ --- can now be recognised as being due to the fact that although pointwise convergence of Lebesgue densities and \ac{OM} functionals holds, $\Gamma$-convergence does not.
Therefore, \Cref{thm:convergence_of_modes} does not apply to that example and there is no reason for modes to converge in this case.}

\Cref{thm:convergence_of_modes} is, of course, a highly general result.
For it to be useful in specific situations, one must prove property $M (\mu^{(n)}, E^{(n)})$ and identify the form of the \ac{OM} functional $I^{(n)}$ for every $n \in \Naturals$.
In addition, one must verify both the $\Gamma$-convergence and equicoercivity properties of the sequence $(I^{(n)})_{n \in \Naturals}$.
In the next section, we do this for Gaussian measures and Besov-1 probability measures, which are commonly used as priors in the context of \ac{BIP}s.

\section{\texorpdfstring{$\boldsymbol{\mathsf{\Gamma}}$}{Gamma}-convergence of Onsager--Machlup functionals for Gaussian and Besov-1 priors}
\label{sec:convergence_of_OM_functionals}

This section illustrates the preceding general theory of convergence of modes via $\Gamma$-convergence of \ac{OM} functionals by means of two key examples, namely Gaussian and Besov $B_{1}^{s}$ measures, both of which commonly arise as prior distributions in \ac{BIP}s.
Besov $B_{p}^{s}$-priors with $1 \leq p \leq 2$, Cauchy priors, and more general product measures are treated in a unified way in Part~II of this paper \citep{AyanbayevKlebanovLieSullivan2021_II}.
The convergence of modes (\ac{MAP} estimators) for \emph{posterior} distributions will be discussed in \Cref{sec:MAP_in_BIP}.

\subsection{Gaussian measures}
\label{sec:Gamma_Gaussian}

As a natural first case, we consider the $\Gamma$-convergence of the \ac{OM} functionals of Gaussian measures --- and we call attention to the fact that we consider Gaussian measures with possibly indefinite covariance operators.
It is almost folklore that the \ac{OM} functional of a Gaussian measures is half the square of the associated Cameron--Martin norm;
a precise formulation of this result is the following.

\begin{theorem}[\ac{OM} functional of a Gaussian on a separable Banach space]%
	\label{thm:OM_Gaussian_sep_Banach}%
	Let $\mu$ be a centred Gaussian measure on a separable Banach space $X$.
	Let $H(\mu)$ be the Cameron--Martin space of $\mu$, with Cameron--Martin norm $\norm{ \quark }_{H(\mu)}$.
	Then, for all $h, k \in H(\mu)$,
	\begin{equation}
		\lim_{r \searrow 0} \frac{ \mu ( \cBall{h}{r} ) }{ \mu ( \cBall{k}{r} ) } = \exp \left( \tfrac{1}{2} \norm{ k }_{H(\mu)}^{2} - \tfrac{1}{2} \norm{ h }_{H(\mu)}^{2} \right) .
	\end{equation}
	In particular, the \ac{OM} functional for $\mu$ on the Cameron--Martin space $H(\mu)$ is half the square of the Cameron--Martin norm.
\end{theorem}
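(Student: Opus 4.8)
The plan is to reduce the statement to the Cameron--Martin theorem and Anderson's inequality. Since $\mu$ is centred we have $\mu(\cBall{0}{r}) > 0$ for all $r > 0$, so it suffices to prove that for each fixed $h \in H(\mu)$
\[
	\lim_{r \searrow 0} \frac{\mu(\cBall{h}{r})}{\mu(\cBall{0}{r})} = \exp\bigl(-\tfrac12 \norm{h}_{H(\mu)}^{2}\bigr)
\]
and then to write the ratio in the theorem as the quotient of the $h$-limit by the (nonzero) $k$-limit. Let $\widehat h$ denote the Paley--Wiener functional of $h$, the measurable linear functional that is centred Gaussian with variance $\norm{h}_{H(\mu)}^{2}$ under $\mu$. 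The Cameron--Martin change-of-variables formula $\int_{X} f(x - h)\,\mu(\rd x) = \int_{X} f(x)\exp\bigl(-\widehat h(x) - \tfrac12\norm{h}_{H(\mu)}^{2}\bigr)\,\mu(\rd x)$, applied to $f = \one_{\cBall{0}{r}}$, gives
\[
	\mu(\cBall{h}{r}) = \exp\bigl(-\tfrac12\norm{h}_{H(\mu)}^{2}\bigr)\int_{\cBall{0}{r}} \exp(-\widehat h)\,\rd\mu ,
\]
so the entire problem reduces to showing that $\mu(\cBall{0}{r})^{-1}\int_{\cBall{0}{r}}\exp(-\widehat h)\,\rd\mu \to 1$ as $r \searrow 0$.

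For the lower bound I would exploit symmetry and convexity. Writing $\nu_{r}(A) \defeq \mu(A \cap \cBall{0}{r})/\mu(\cBall{0}{r})$ for the normalised restriction of $\mu$ to the ball, $\nu_{r}$ is a symmetric probability measure (as $\mu$ is centred and $\cBall{0}{r}$ is symmetric) and $\widehat h$ is odd, so $\int \widehat h \,\rd\nu_{r} = 0$. Jensen's inequality for the convex function $t \mapsto e^{-t}$ then yields $\int \exp(-\widehat h)\,\rd\nu_{r} \geq \exp\bigl(-\int \widehat h\,\rd\nu_{r}\bigr) = 1$ for every $r > 0$, which is the desired lower bound.

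The upper bound is the heart of the matter, and the obstruction is that $\widehat h$ is in general only a measurable --- not continuous --- linear functional, so it need not be uniformly small on $\cBall{0}{r}$. I would circumvent this by approximation combined with Anderson's inequality. Using that the $g \in H(\mu)$ whose Paley--Wiener functional $\widehat g$ is continuous (that is, $\widehat g = \phi \in X^{*}$) are dense in $H(\mu)$, pick $g_{n} \to h$ in $H(\mu)$ with $\widehat{g_{n}} = \phi_{n} \in X^{*}$, and set $w_{n} \defeq h - g_{n}$, $\delta_{n} \defeq \norm{w_{n}}_{H(\mu)} \to 0$. Splitting $\widehat h = \phi_{n} + \widehat{w_{n}}$ and using $\absval{\phi_{n}(x)} \leq \norm{\phi_{n}}_{X^{*}}\, r$ on $\cBall{0}{r}$ gives
\[
	\int_{\cBall{0}{r}} \exp(-\widehat h)\,\rd\mu \leq \exp\bigl(\norm{\phi_{n}}_{X^{*}} r\bigr) \int_{\cBall{0}{r}} \exp(-\widehat{w_{n}})\,\rd\mu .
\]
The remaining integral is controlled \emph{uniformly in $r$}: the same Cameron--Martin identity gives $\int_{\cBall{0}{r}} \exp(-\widehat{w_{n}})\,\rd\mu = \exp\bigl(\tfrac12\delta_{n}^{2}\bigr)\,\mu(\cBall{w_{n}}{r})$, and Anderson's inequality (a translate of a symmetric convex set carries no more Gaussian mass than the set itself) gives $\mu(\cBall{w_{n}}{r}) \leq \mu(\cBall{0}{r})$. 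Dividing by $\mu(\cBall{0}{r})$ and letting $r \searrow 0$ yields $\limsup_{r \searrow 0} \mu(\cBall{0}{r})^{-1}\int_{\cBall{0}{r}}\exp(-\widehat h)\,\rd\mu \leq \exp(\tfrac12\delta_{n}^{2})$; letting $n \to \infty$ sends the right-hand side to $1$. Combined with the lower bound, this gives the limit $1$ and hence the theorem.

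I expect the main obstacle to be exactly this upper bound, i.e.\ showing that the rough part $\widehat{w_{n}}$ of the Paley--Wiener functional makes no contribution in the limit even though it may be unbounded on every ball; the two ingredients that make this work uniformly in $r$ are the approximation of $h$ by Cameron--Martin elements with genuinely continuous functionals and the absorption of the error through Cameron--Martin and Anderson. Beyond this, the remaining care is to verify that the classical facts invoked --- the Cameron--Martin change-of-variables formula, the density of $\{\,g \in H(\mu) \mid \widehat g \in X^{*}\,\}$ in $H(\mu)$, and Anderson's inequality for symmetric convex sets --- hold in the separable Banach space setting, for which I would cite \citet{Bogachev1998Gaussian}.
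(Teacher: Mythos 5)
Your proof is correct, and all of its ingredients --- the Cameron--Martin change-of-variables formula, the density in $H(\mu)$ of those $g$ with $\widehat{g} \in X^{*}$, and Anderson's inequality (valid in this generality via Borell's log-concavity, applied to the open symmetric convex set $\cBall{0}{r}$) --- do hold for centred Gaussian measures on separable Banach spaces and are in \citet{Bogachev1998Gaussian}. But it is a genuinely different route from the paper's: the paper proves this theorem by citation, observing that it is a special case of \citet[Corollary~4.7.8]{Bogachev1998Gaussian} with the measurable seminorm taken to be the ambient norm, where separability makes the cylindrical and Borel $\sigma$-algebras coincide, and where the fact that Bogachev works with closed balls while the theorem uses open ones is handled by \Cref{prop:Open_Closed_Coincide}. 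Your argument instead reproves the substance of that cited result from first principles: Cameron--Martin reduces everything to showing $\mu(\cBall{0}{r})^{-1} \int_{\cBall{0}{r}} e^{-\widehat{h}} \, \rd \mu \to 1$; symmetry of the normalised restriction, oddness of $\widehat{h}$, and Jensen give the lower bound $1$ for every $r$; and the splitting $\widehat{h} = \phi_{n} + \widehat{w_{n}}$ with $\phi_{n} \in X^{*}$, followed by Cameron--Martin and Anderson applied to the remainder $\widehat{w_{n}}$, bounds the limit superior by $\exp ( \tfrac{1}{2} \norm{ w_{n} }_{H(\mu)}^{2} ) \to 1$. The trade-off is clear: the paper's citation is a two-line proof and inherits the extra generality of Bogachev's statement (arbitrary measurable seminorms), at the price of the open-versus-closed-ball bookkeeping; your proof is self-contained, works with open balls throughout so that no analogue of \Cref{prop:Open_Closed_Coincide} is needed, and makes visible exactly where symmetry, convexity, and the approximation of rough Paley--Wiener functionals by continuous ones enter. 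Two small points you should make explicit in a write-up: $\mu(\cBall{0}{r}) > 0$ because $0 \in \supp(\mu)$ (the support of a centred Gaussian measure is the closure of $H(\mu)$), and the denominator limit for $k$ is strictly positive, so the final division of the two limits is legitimate.
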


\begin{proof}
	This is a special case of \citep[Corollary~4.7.8]{Bogachev1998Gaussian}, in which the cylindrical $\sigma$-algebra $\cylindrical{X}$ and the Borel $\sigma$-algebra $\Borel{X}$ coincide by the separability of $X$, the measurable seminorm $q$ is the ambient norm $\norm{ \quark }_{X}$, the $q$-ball $V_{r}$ is the ball $\cBall{0}{r} \in \Borel{X}$, and the projection $\pi_{q}$ is the identity due to the definiteness of $q (\quark)= \norm{ \quark }_{X}$.
	Note that \citet{Bogachev1998Gaussian} works with closed balls, this difference being inconsequential in view of \Cref{prop:Open_Closed_Coincide}.
\end{proof}

\begin{corollary}
	\label{cor:OMfunctionalForCenteredGaussians}
	Let $\mu = \Normal (0,C)$ be a centered Gaussian measure on a separable Hilbert space $X$, where the covariance $C$ is interpreted as an \ac{SPSD} operator on $X$.
	Then the (extended) \ac{OM} functional $I_{\mu} \colon X \to \eReals$ of $\mu$ is given by
	\[
	I_{\mu}(u)
	=
	\begin{cases}
	\tfrac{1}{2} \norm{C^{\dagger/2} u}_{X}^{2} & \text{for } u\in H(\mu) = \range C^{1/2},
	\\
	+\infty & \text{otherwise.}
	\end{cases}
	\]
\end{corollary}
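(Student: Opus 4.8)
The plan is to derive the corollary from \Cref{thm:OM_Gaussian_sep_Banach} by specialising to the Hilbert-space setting and then making the Cameron--Martin norm explicit in terms of the covariance operator $C$. Since a separable Hilbert space is in particular a separable Banach space, \Cref{thm:OM_Gaussian_sep_Banach} applies verbatim and already tells us that, restricted to $H(\mu)$, the \ac{OM} functional equals $\tfrac{1}{2} \norm{\quark}_{H(\mu)}^{2}$. The only work that remains is therefore twofold: (i) to identify $H(\mu)$ and its norm with $\range C^{1/2}$ and $\norm{C^{\dagger/2} \quark}_{X}$; and (ii) to justify extending $I_{\mu}$ by $+\infty$ off $H(\mu)$, which is exactly what property $M(\mu, H(\mu))$ licenses.

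For step~(i), I would invoke the standard structure theory of Gaussian measures on Hilbert spaces. Diagonalising $C = \sum_{n} \sigma_{n}^{2}\, e_{n} \otimes e_{n}$ as in \Cref{remark:SquareRootOfSPSDOperatorAndItsPseudoInverse}, the Cameron--Martin space is $H(\mu) = C^{1/2}(X) = \range C^{1/2}$, and its norm is the minimum-norm preimage under $C^{1/2}$:
\[
	\norm{u}_{H(\mu)} = \inf \Set{ \norm{v}_{X} }{ C^{1/2} v = u } \qquad \text{for } u \in \range C^{1/2}.
\]
Applying \Cref{rem:minimum_norm_properties_pseudoinverse} to the bounded operator $C^{1/2}$, this infimum is attained at $v = (C^{1/2})^{\dagger} u = C^{\dagger/2} u$, so that $\norm{u}_{H(\mu)} = \norm{C^{\dagger/2} u}_{X}$ and hence $\tfrac{1}{2} \norm{u}_{H(\mu)}^{2} = \tfrac{1}{2} \norm{C^{\dagger/2} u}_{X}^{2}$, which is precisely the claimed expression on $\range C^{1/2}$.

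For step~(ii), the assignment $I_{\mu}(u) \defeq + \infty$ for $u \in X \setminus H(\mu)$ is sanctioned by \Cref{defn:Onsager--Machlup} as soon as property $M(\mu, H(\mu))$ is known to hold; for a centred Gaussian this is \citep[Lemma~3.7]{DashtiLawStuartVoss2013}, already recalled in the discussion following \Cref{defn:Onsager--Machlup}. (In the degenerate case some $\sigma_{n}$ vanish; then one works relative to $\supp(\mu) = \clos{\range C^{1/2}}$, but the argument is unchanged since the degenerate directions contribute nothing to $\range C^{1/2}$ and are annihilated by $C^{\dagger/2}$.) Combining the two steps yields the stated piecewise formula.

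I expect the main subtlety to be bookkeeping around the pseudoinverse rather than any genuine obstacle. One must keep $C^{\dagger/2}$ fixed to mean $(C^{1/2})^{\dagger}$ and not $(C^{\dagger})^{1/2}$ --- a distinction that \Cref{remark:SquareRootOfSPSDOperatorAndItsPseudoInverse} explicitly flags, the latter potentially having a strictly smaller domain --- and one must verify that the minimum-norm characterisation of \Cref{rem:minimum_norm_properties_pseudoinverse} is being invoked for $C^{1/2}$ with the correct range. Everything else is a direct transcription of \Cref{thm:OM_Gaussian_sep_Banach}.
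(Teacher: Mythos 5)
Your proposal is correct and shares the paper's outer skeleton --- \Cref{thm:OM_Gaussian_sep_Banach} gives the \ac{OM} functional on $H(\mu)$, and property $M(\mu, H(\mu))$ from \citep[Lemma~3.7]{DashtiLawStuartVoss2013} licenses the extension by $+\infty$ --- but you justify the central identification $\norm{u}_{H(\mu)} = \norm{C^{\dagger/2}u}_{X}$ by a genuinely different route. The paper works through Bogachev's reproducing-kernel construction: it identifies $X_{\mu}^{\ast}$ with the weighted sequence space $\ell_{C}^{2}$, uses $H(\mu) = C(X_{\mu}^{\ast})$, and computes the norm coordinatewise via \citep[Lemma~2.4.1]{Bogachev1998Gaussian}, with \Cref{remark:SquareRootOfSPSDOperatorAndItsPseudoInverse} handling the pseudoinverse bookkeeping. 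You instead quote the factorisation characterisation $\norm{u}_{H(\mu)} = \inf \set{ \norm{v}_{X} }{ C^{1/2} v = u }$ as known structure theory and let \Cref{rem:minimum_norm_properties_pseudoinverse} (the minimum-norm property of $(C^{1/2})^{\dagger}$) finish the computation. Both arguments are sound; yours is shorter, coordinate-free, and makes the conceptual role of the pseudoinverse transparent, whereas the paper's derivation builds the same fact from the more primitive ingredients it explicitly cites. Note that the characterisation you invoke is itself a theorem about Gaussian measures on Hilbert spaces (essentially what the paper's $\ell_{C}^{2}$ computation proves), not a definition, so in a final write-up you would need to attach a precise citation to it; with that reference supplied, your argument is complete. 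Your closing cautions --- keeping $C^{\dagger/2} = (C^{1/2})^{\dagger}$ distinct from $(C^{\dagger})^{1/2}$, and treating the degenerate directions via $\supp(\mu)$ --- match the paper's own treatment of possibly non-injective covariances.
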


\begin{proof}
	By \citep[Section~2.3, p.~49]{Bogachev1998Gaussian}, the reproducing kernel Hilbert space $X_{\mu}^{\ast} \defeq \overline{ X^{\ast} }^{L^{2}(\mu)}$ of $\mu$ can be identified with the weighted Hilbert space of sequences
	\begin{equation}
		\label{equ:SpaceOfSequencesCorrespondingToRKHS}
		\ell_{C}^{2}
		\defeq
		\Set{ x = (x_{n}) }{ \sum_{n\in\Naturals} \sigma_{n}^{2} x_{n}^{2} < \infty },
		\qquad
		\norm{x}_{\ell_{C}^{2}} \defeq \left( \sum_{n\in\Naturals} \sigma_{n}^{2} x_{n}^{2} \right)^{1/2}.
	\end{equation}
	Further, after extending $C$ naturally to $X_{\mu}^{\ast}$, the Cameron--Martin space coincides with the image of $X_{\mu}^{\ast}$ under $C$, i.e.\ $H(\mu) = C(X_{\mu}^{\ast})$.

	Now let $C = \sum_{n \in \Naturals} \sigma_{n}^{2} e_{n} \otimes e_{n}$, $\sigma_{n} \geq 0$, be the eigenvalue decomposition of its covariance operator $C$ with complete orthonormal system $(e_{n})_{n\in\Naturals}$ and let $u = \sum_{n \in \Naturals} u_{n} e_{n} \in H(\mu)$.
	Since $H(\mu) = C(X_{\mu}^{\ast})$, there exists $x = (x_{n})_{n \in \Naturals} \in \ell_{C}^{2}$, such that $u_{n} = \sigma_{n}^{2} x_{n}$ for all $n\in\Naturals$, and, by \citep[Lemma~2.4.1]{Bogachev1998Gaussian} and \Cref{remark:SquareRootOfSPSDOperatorAndItsPseudoInverse},
	\[
		\norm{u}_{H(\mu)}^{2}
		=
		\norm{x}_{\ell_{C}^{2}}^{2}
		=
		\sum_{n \in \Naturals} \sigma_{n}^{2} x_{n}^{2}
		=
		\sum_{n \in \Naturals\, : \, \sigma_{n} \neq 0} \frac{u_{n}^{2}}{\sigma_{n}^{2}}
		=
		\norm{C^{\dagger/2} u}_{X}^{2}.
	\]
	The claim now follows from \Cref{thm:OM_Gaussian_sep_Banach} and from \citep[Lemma~3.7]{DashtiLawStuartVoss2013}, where property $M(\mu,H(\mu))$ is established.\footnote{The statement of \citep[Lemma~3.7]{DashtiLawStuartVoss2013} may be understood as a weaker statement than needed for the $M(\mu,H(\mu))$ ($\liminf$ in place of $\lim$ in \eqref{eq:liminf_small_ball_prob}).
		However, their proof clearly shows the stronger statement as we use it here.
		Further, as pointed out by \citet{Wacker2020}, their proof is not valid in the Banach space setting, but works in the Hilbert space setting, which is sufficient for our purposes.}
\end{proof}

\begin{remark}
	Note that the notation in \citep[Section~2.3, p.~49]{Bogachev1998Gaussian} is slightly imprecise, since the space $\ell_{C}^{2}$ in \eqref{equ:SpaceOfSequencesCorrespondingToRKHS} is, in general, only a pre-Hilbert space (and $\norm{\quark}_{\ell_{C}^{2}} $ is just a seminorm).
	To be rigorous, one would need to consider the quotient space of $\ell_{C}^{2}$ after factoring out the subspace $\{ x \mid \norm{x}_{\ell_{C}^{2}} = 0 \}$.
	This detail has no influence on the proof of \Cref{cor:OMfunctionalForCenteredGaussians}.
\end{remark}

\begin{corollary}
	\label{cor:OMfunctionalForUncenteredGaussians}
	Let $\mu_{0} = \Normal (0,C)$ be a centered Gaussian measure on a separable Hilbert space $X$, where the covariance $C$ is interpreted as an \ac{SPSD} operator on $X$, and $\mu = \Normal (m,C)$.
	Then the \ac{OM} functional $I_{\mu} \colon X \to \eReals$ of $\mu$ is given by
	\[
		I_{\mu}(u)
		=
		\begin{cases}
			\tfrac{1}{2} \norm{ (u-m) }_{H(\mu_{0})}^{2}
			=
			\tfrac{1}{2} \norm{C^{\dagger/2} (u-m)}_{X}^{2} & \text{for } (u-m)\in H(\mu_{0}) = \range C^{1/2},
			\\
			+\infty & \text{otherwise.}
		\end{cases}
	\]
\end{corollary}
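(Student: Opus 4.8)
The plan is to reduce the uncentred case to the centred case of \Cref{cor:OMfunctionalForCenteredGaussians} by exploiting translation invariance. The measure $\mu = \Normal(m, C)$ is exactly the pushforward of $\mu_0 = \Normal(0, C)$ under the translation map $T_m \colon X \to X$, $T_m(x) \defeq x + m$; indeed, if $\xi \sim \mu_0$ then $\xi + m \sim \mu$. Since $T_m$ is an isometric bijection of the Hilbert space $X$, it carries open balls to open balls, $T_m(\cBall{x}{r}) = \cBall{x + m}{r}$, so that $T_m^{-1}(\cBall{u}{r}) = \cBall{u - m}{r}$. Combining this with the pushforward identity $\mu(A) = \mu_0(T_m^{-1}(A))$ gives, for every $u \in X$ and $r > 0$,
\[
	\mu(\cBall{u}{r}) = \mu_0\bigl( T_m^{-1}(\cBall{u}{r}) \bigr) = \mu_0(\cBall{u - m}{r}) .
\]

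First I would use this to transfer the small-ball ratios verbatim: for any $u_{1}, u_{2} \in X$,
\[
	\frac{\mu(\cBall{u_{1}}{r})}{\mu(\cBall{u_{2}}{r})} = \frac{\mu_0(\cBall{u_{1} - m}{r})}{\mu_0(\cBall{u_{2} - m}{r})} ,
\]
so the limit defining the \ac{OM} functional of $\mu$ at $(u_{1}, u_{2})$ equals the corresponding limit for $\mu_0$ at $(u_{1} - m, u_{2} - m)$. \Cref{cor:OMfunctionalForCenteredGaussians} supplies an \ac{OM} functional for $\mu_0$ on $H(\mu_0) = \range C^{1/2}$, namely $v \mapsto \tfrac{1}{2} \norm{C^{\dagger/2} v}_{X}^{2}$, and establishes property $M(\mu_0, H(\mu_0))$. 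Hence, setting $E \defeq m + H(\mu_0)$ and $I_{\mu}(u) \defeq I_{\mu_0}(u - m)$ for $u \in E$ yields a function satisfying \eqref{eq:Onsager--Machlup} for $\mu$ on $E$. This is precisely the asserted expression, since $u \in E$ is equivalent to $u - m \in H(\mu_0)$, and $\norm{(u - m)}_{H(\mu_0)}^{2} = \norm{C^{\dagger/2}(u - m)}_{X}^{2}$ by \Cref{cor:OMfunctionalForCenteredGaussians}.

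It then remains to verify property $M(\mu, E)$, which justifies the extension by $+\infty$ outside $E$ in the sense of \Cref{defn:Onsager--Machlup}. Choosing the reference point $x^{\star} \defeq m \in E$, so that $x^{\star} - m = 0$ is a valid reference point for $\mu_0$, any $x \in X \setminus E$ satisfies $x - m \in X \setminus H(\mu_0)$, and the translated identity gives
\[
	\lim_{r \searrow 0} \frac{\mu(\cBall{x}{r})}{\mu(\cBall{x^{\star}}{r})} = \lim_{r \searrow 0} \frac{\mu_0(\cBall{x - m}{r})}{\mu_0(\cBall{0}{r})} = 0
\]
by property $M(\mu_0, H(\mu_0))$. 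This completes the identification of the extended \ac{OM} functional of $\mu$.

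I do not expect any genuine obstacle here, as the argument is essentially a change of variables; the only points requiring care are bookkeeping ones. First, the additive-constant ambiguity of \ac{OM} functionals (\Cref{remark:UniquenessOMfunctionals}) means that $I_{\mu}(u) = I_{\mu_0}(u - m)$ is the natural representative, though any real constant could be added without affecting the statement. Second, one must keep $H(\mu_0)$ rather than $H(\mu)$ in the formula: although the Cameron--Martin \emph{subspace} is determined by $C$ alone and is thus unchanged by the mean shift, the finiteness domain of $I_{\mu}$ is the affine translate $m + H(\mu_0)$, which is what makes the conditions $(u - m) \in H(\mu_0) = \range C^{1/2}$ appear in the case distinction.
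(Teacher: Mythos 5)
Your proof is correct and takes essentially the same route as the paper: the paper's proof is a one-line reduction to \Cref{thm:OM_Gaussian_sep_Banach} and \Cref{cor:OMfunctionalForCenteredGaussians}, and the translation argument you spell out (pushing forward $\mu_{0}$ under $x \mapsto x + m$, transferring both the small-ball ratios and property $M$) is precisely the reduction that this one-liner leaves implicit. The only detail worth noting is that your choice of $0$ as reference point for $\mu_{0}$ is licensed by the reference-point independence of property $M$ (\Cref{lem:vanishing_small_ball_prob}\ref{item:vanishing_small_ball_prob_xstar_or_xstarstar}), which you use tacitly.
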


\begin{proof}
	This follows directly from \Cref{thm:OM_Gaussian_sep_Banach} and \Cref{cor:OMfunctionalForCenteredGaussians}.
\end{proof}

We now give the main result of this section, that the strong (norm) convergence of means and covariance operators of Gaussian measures is sufficient to ensure that their associated \ac{OM} functionals are $\Gamma$-convergent and equicoercive.

\begin{theorem}[$\mathsf{\Gamma}$-convergence and equicoercivity of \ac{OM} functionals for Gaussian measures]
	\label{thm:GammaConvergenceForGaussianOM}
	Let $X$ be a separable Hilbert space and $\mu^{(n)} = \Normal (m^{(n)}, C^{(n)})$ and $\mu = \Normal (m, C)$ be Gaussian measures on $X$ such that $m^{(n)} \to m$ in $X$ and $C^{(n)} \to C$ with respect to the operator norm.
	Then $I_{\mu} = \Gammalim_{n \to \infty} I_{\mu^{(n)}}$.
	Furthermore, the sequence $(I_{\mu^{(n)}})_{n\in\Naturals}$ is equicoercive.
\end{theorem}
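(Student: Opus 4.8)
The plan is to run the argument on the explicit formula for the functionals furnished by \Cref{cor:OMfunctionalForUncenteredGaussians}: $I_{\mu^{(n)}}(u) = \tfrac12\norm{(C^{(n)})^{\dagger/2}(u - m^{(n)})}_{X}^{2}$ for $u - m^{(n)} \in \range (C^{(n)})^{1/2}$ and $+\infty$ otherwise, and likewise for $I_{\mu}$. Writing $J_{C}(v) \defeq \tfrac12\norm{C^{\dagger/2}v}_{X}^{2}$ (with value $+\infty$ off $\range C^{1/2}$), one has $I_{\mu^{(n)}}(u) = J_{C^{(n)}}(u - m^{(n)})$. Since $m^{(n)} \to m$, I would first reduce to the centred problem: translating a recovery sequence for $J_{C^{(n)}} \to J_{C}$ by the convergent means $m^{(n)}$ yields one for $I_{\mu^{(n)}}$, the $\liminf$ inequality transfers because $u^{(n)} \to u$ forces $u^{(n)} - m^{(n)} \to u - m$, and a common compact sublevel set for the $J_{C^{(n)}}$ becomes one for the $I_{\mu^{(n)}}$ after adding the compact set $\{m^{(n)} : n \in \Naturals\} \cup \{m\}$. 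So it suffices to prove $J_{C^{(n)}} \xrightarrow{\Gamma} J_{C}$ and equicoercivity of $(J_{C^{(n)}})_{n}$.

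The conceptual engine is the Legendre--Fenchel duality $J_{C} = Q_{C}^{\ast}$, where $Q_{C}(\phi) \defeq \tfrac12\innerprod{C\phi}{\phi}$: diagonalising $C$ shows coordinatewise that $\sup_{\phi}\bigl(\innerprod{\phi}{v} - Q_{C}(\phi)\bigr)$ equals $\tfrac12\norm{C^{\dagger/2}v}_{X}^{2}$ when $v \in \range C^{1/2}$ and $+\infty$ otherwise. The $\liminf$ inequality is then immediate: for $u^{(n)} \to u$ and any fixed $\phi$, $J_{C^{(n)}}(u^{(n)}) \geq \innerprod{\phi}{u^{(n)}} - \tfrac12\innerprod{C^{(n)}\phi}{\phi} \to \innerprod{\phi}{u} - \tfrac12\innerprod{C\phi}{\phi}$ (using $u^{(n)} \to u$ and $C^{(n)} \to C$ in operator norm), so taking the supremum over $\phi$ gives $\liminf_{n} J_{C^{(n)}}(u^{(n)}) \geq J_{C}(u)$. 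Alternatively, as each $Q_{C^{(n)}}$ is convex and $Q_{C^{(n)}} \to Q_{C}$ uniformly on bounded sets (hence in the Mosco sense), the whole $\Gamma$-convergence follows from the stability of conjugation under Mosco convergence (Attouch); but I would present the direct route.

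For the recovery sequence the genuine difficulty appears: the Cameron--Martin spaces $\range (C^{(n)})^{1/2}$ move with $n$, so a point $u \in \range C^{1/2}$ need not lie in the domain of $J_{C^{(n)}}$. I would resolve this via operator-norm continuity of the square root, namely that $C^{(n)} \to C$ in operator norm implies $(C^{(n)})^{1/2} \to C^{1/2}$ in operator norm (e.g.\ through $\norm{A^{1/2} - B^{1/2}} \leq \norm{A - B}^{1/2}$ for \ac{SPSD} $A, B$). Given $u$ with $J_{C}(u) < \infty$, set $w \defeq C^{\dagger/2}u \in (\ker C)^{\perp}$, so that $u = C^{1/2}w$ and $J_{C}(u) = \tfrac12\norm{w}_{X}^{2}$, and take $u^{(n)} \defeq (C^{(n)})^{1/2}w$. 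Then $u^{(n)} \to C^{1/2}w = u$, while $(C^{(n)})^{\dagger/2}u^{(n)}$ is the orthogonal projection of $w$ onto $\overline{\range C^{(n)}}$, whence $J_{C^{(n)}}(u^{(n)}) \leq \tfrac12\norm{w}_{X}^{2} = J_{C}(u)$ and the required $\limsup$ bound holds; when $J_{C}(u) = +\infty$ the constant sequence works trivially.

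Finally, for equicoercivity the sublevel set $\set{v}{J_{C^{(n)}}(v) \leq t}$ equals $(C^{(n)})^{1/2}\bigl(\ccBall{0}{\sqrt{2t}} \cap (\ker C^{(n)})^{\perp}\bigr)$, hence lies in $S_{n} \defeq (C^{(n)})^{1/2}\bigl(\ccBall{0}{\sqrt{2t}}\bigr)$. Splitting $(C^{(n)})^{1/2}w = C^{1/2}w + \bigl((C^{(n)})^{1/2} - C^{1/2}\bigr)w$ shows each $S_{n}$ lies within distance $\norm{(C^{(n)})^{1/2} - C^{1/2}}\sqrt{2t} \to 0$ of the compact set $\overline{C^{1/2}(\ccBall{0}{\sqrt{2t}})}$ (compact since $C^{1/2}$ is compact, Gaussian covariances being trace class). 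A routine total-boundedness argument then gives that $\overline{\bigcup_{n} S_{n}}$ is compact, and adding $\{m^{(n)} : n \in \Naturals\} \cup \{m\}$ yields one compact $K_{t}$ containing $\set{u}{I_{\mu^{(n)}}(u) \leq t}$ for every $n$. I expect the operator-norm continuity of the square root to be the linchpin of the whole proof: it is exactly what reconciles the moving Cameron--Martin domains in the recovery step \emph{and} supplies the uniform compactness behind equicoercivity, whereas the $\liminf$ half is essentially free from the duality.
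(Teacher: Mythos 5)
Your proof is correct, and it splits into one step that essentially coincides with the paper's and two that take genuinely different routes. The recovery sequence is the same in both: with $w \defeq C^{\dagger/2}(u-m)$ you take $u^{(n)} \defeq m^{(n)} + (C^{(n)})^{1/2}w$, and your observation that $(C^{(n)})^{\dagger/2}(C^{(n)})^{1/2}w$ is the orthogonal projection of $w$ onto $(\ker C^{(n)})^{\perp}$ is exactly the paper's appeal to the minimum-norm property of the pseudoinverse (\Cref{rem:minimum_norm_properties_pseudoinverse}). The $\Gamma$-$\liminf$ inequality is where you genuinely diverge: the paper extracts a subsequence realising the liminf, passes to coordinates in an eigenbasis of $C^{1/2}$, proves componentwise convergence of $v^{(n)} \defeq (C^{(n)})^{\dagger/2}(u^{(n)}-m^{(n)})$, and then needs the technical \Cref{lemma:TechnicalLemmaOnConvergenceInl2} to conclude that the componentwise limit lies in $\ell^{2}$ with the correct norm bound; your Fenchel-duality identity $J_{C} = Q_{C}^{\ast}$ with $Q_{C}(\phi) \defeq \tfrac{1}{2}\innerprod{C\phi}{\phi}$ replaces all of that bookkeeping by a supremum of affine lower bounds $\innerprod{\phi}{\quark} - Q_{C^{(n)}}(\phi)$ that pass to the limit trivially --- no subsequences, no technical lemma --- and it even reveals that the liminf half needs only $\innerprod{C^{(n)}\phi}{\phi} \to \innerprod{C\phi}{\phi}$ for each fixed $\phi$, i.e.\ weak operator convergence of the covariances rather than norm convergence. (The identity $Q_{C}^{\ast} = J_{C}$ does require checking that the supremum over $\phi \in X$ equals the sum of coordinatewise suprema, but finite truncations settle this routinely.) For equicoercivity the paper runs a sequential diagonal argument over the union of sublevel sets, extracting convergent subsequences via compactness of $C^{1/2}$; you instead prove total boundedness of $\bigcup_{n}(C^{(n)})^{1/2}\bigl(\ccBall{0}{\sqrt{2t}}\bigr)$ from the perturbation bound $\norm{(C^{(n)})^{1/2}-C^{1/2}} \leq \norm{C^{(n)}-C}^{1/2}$, which is equivalent in substance but more quantitative; your preliminary centring reduction (Minkowski-summing with the compact set of means $\{m^{(n)}\}_{n} \cup \{m\}$) is also sound. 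Finally, your closing remark deserves emphasis: the paper asserts $\norm{A_{n}-A} \to 0$ for $A_{n} = (C^{(n)})^{1/2}$, $A = C^{1/2}$ without justification, and the operator-monotonicity inequality you cite is precisely the missing ingredient there as well, so your write-up makes explicit the one point where operator-norm convergence of the covariances is truly used.
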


\change{%
\begin{remark}
	Since all the Gaussian \ac{OM} functionals $I_{\mu^{(n)}}$ are quadratic forms, and homogeneity is preserved by $\Gamma$-limits \citep[Proposition~2.13]{Braides2006}, it is not surprising that $\Gammalim_{n \to \infty} I_{\mu^{(n)}}$ is quadratic --- the point here is to check that the quadratic forms $\Gammalim_{n \to \infty} I_{\mu^{(n)}}$ and $I_{\mu}$ agree, and moreover with careful attention to the possibility of indefinite covariances.
\end{remark}
}

\begin{proof}[Proof of \Cref{thm:GammaConvergenceForGaussianOM}]
	Let $A \defeq C^{1/2}$ and $A_{n} \defeq (C^{(n)})^{1/2}$.
	Further, let $(e_{k})_{k\in\N}$ be an orthonormal eigenbasis of $A$, $A = \sum_{k\in\N} \sigma_{k}\, e_{k}\otimes e_{k}$ with $\sigma_{k} \geq 0$, and, for any vector $w\in X$, let $w_{k} \defeq \innerprod{w}{e_{k}}_{X}$ denote its $k$\textsuperscript{th} component in that basis.

	Let $(u^{(n)})_{n\in\Naturals}$ be a sequence in $X$ that converges to $u\in X$. If $\liminf_{n\to\infty} I_{\mu^{(n)}}(u^{(n)}) = \infty$, then there is nothing to prove.
	Therefore, define $\mathcal{I} \defeq \liminf_{n\to\infty} I_{\mu^{(n)}}(u^{(n)}) \in \Reals$.
	There exists a subsequence of $(u^{(n)})_{n \in \Naturals}$, which for simplicity we also denote by $(u^{(n)})_{n \in \Naturals}$, such that $u^{(n)} - m^{(n)} \in \range A_{n}$ for each $n\in\N$ and $I_{\mu^{(n)}}(u^{(n)}) \xrightarrow[n\to\infty]{} \mathcal{I}$ \change{(note that $I_{\mu^{(n)}}(u^{(n)}) = \infty$ unless $u^{(n)} - m^{(n)} \in \range A_{n}$)}.

	Now let $\varepsilon > 0$ and $v^{(n)}\defeq A_{n}^{\dagger} (u^{(n)} - m^{(n)})$, $n \in \Naturals$.
	Without loss of generality (possibly, by a further thinning of the subsequence) and using \Cref{cor:OMfunctionalForUncenteredGaussians}, we may assume
	$\tfrac{1}{2} \norm{v^{(n)}}_{X}^{2} = I_{\mu^{(n)}}(u^{(n)}) \leq \mathcal{I} + \varepsilon$ for each $n\in\N$.
	Define $\mathcal{K} \defeq \{ k\in\N \mid \sigma_{k} > 0 \}$ and the sequences $\hat{v} = (v_{k})_{k\in\N}$ and $\round{\hat{v}^{(n)}} = (\round{v_{k}^{(n)}})_{k\in\N}$, $n\in\N$, by
	\[
		v_{k}
		\defeq
		\begin{cases}
			\frac{u_{k} - m_{k}}{\sigma_{k}} & \text{if } k\in \mathcal{K},
			\\
			0 & \text{otherwise},
		\end{cases}
		\qquad
		\round{v_{k}^{(n)}}
		\defeq
		\begin{cases}
			v_{k}^{(n)} & \text{if } k\in \mathcal{K},
			\\
			0 & \text{otherwise}.
		\end{cases}
	\]

	To prove the $\Gamma$-$\liminf$ inequality, and with \Cref{cor:OMfunctionalForUncenteredGaussians} in mind, we must show that
	\begin{enumerate}[label=(\roman*)]
		\item
		\label{item:vInX}
		$\hat{v} \in \ell^{2}$ and therefore $v \defeq \sum_{k \in \Naturals} v_{k} e_{k} \in X$;
		\item
		\label{item:AvEqualsu}
		$A v = u-m$ and therefore $u-m\in\range{A}$;
		\item
		\label{item:vHasCorrectUpperBound}
		$\tfrac{1}{2}\norm{v}_{X}^{2} \leq \mathcal{I} + \varepsilon$ and therefore
		\[
			I_{\mu}(u)
			=
			\tfrac{1}{2}\norm{C^{\dagger/2} (u-m)}_{X}^{2}
			\leq
			\tfrac{1}{2}\norm{v}_{X}^{2}
			\leq
			\mathcal{I}
			=
			\liminf_{n\to\infty} I_{\mu^{(n)}}(u^{(n)}),
		\]
		by using the fact that $\varepsilon > 0$ is arbitrary and by using \Cref{rem:minimum_norm_properties_pseudoinverse}.
	\end{enumerate}

	Since $\norm{A_{n} - A} \to 0$, $\norm{u^{(n)} - u}_{X} \to 0$ and $\norm{v^{(n)}}_{X} \leq M_{\varepsilon} \defeq \sqrt{2\mathcal{I} + 2\varepsilon}$ for all $n\in\N$, we obtain
	\begin{align*}
		\norm{u - m - A v^{(n)}}_{X}
		&\leq
		\norm{(u-m) - (u^{(n)} - m^{(n)})}_{X} + \norm{u^{(n)} - m^{(n)} - A v^{(n)}}_{X}
		\\
		&=
		\norm{u - u^{(n)}}_{X} + \norm{m - m^{(n)}}_{X} + \norm{(A_{n} - A) v^{(n)}}_{X}
		\\
		&\leq
		\norm{u - u^{(n)}}_{X} + \norm{m - m^{(n)}}_{X} + \norm{ A_{n} - A } \norm{v^{(n)}}_{X}
		\\
		&\xrightarrow[n\to\infty]{}
		0.
	\end{align*}

	The above convergence implies componentwise convergence: $\absval{u_{k} - m_{k} - \sigma_{k} v_{k}^{(n)}} \xrightarrow[n\to\infty]{} 0$ for each $k\in\N$ or, equivalently, $\round{v_{k}^{(n)}} \xrightarrow[n\to\infty]{} v_{k}$ for each $k\in\N$ and $u_{k} - m_{k} = 0$ for all $k \notin \mathcal{K}$.
	Since, for each $n\in\N$, $\round{\hat{v}^{(n)}} \in \ell^{2}$ with $\norm{\round{\hat{v}^{(n)}}}_{\ell^{2}} \leq \norm{v^{(n)}}_{X} \leq M_{\varepsilon}$, \Cref{lemma:TechnicalLemmaOnConvergenceInl2} implies that $\hat{v} \in \ell^{2}$ and $v \in X$ with $\norm{\hat{v}}_{\ell^{2}} = \norm{v}_{X} \leq M_{\varepsilon}$, proving \ref{item:vInX} and \ref{item:vHasCorrectUpperBound}.
	Since $u_{k} - m_{k} = 0$ for all $k \notin \mathcal{K}$, we obtain $A v = \sum_{k\in \mathcal{K}} (u_{k} - m_{k}) e_{k} = \sum_{k\in \Naturals} (u_{k} - m_{k}) e_{k} = u - m$, proving \ref{item:AvEqualsu} and finalising the proof of the $\Gamma$-$\liminf$ inequality.

	For the $\Gamma$-$\limsup$ inequality, first note that, if $u-m \notin \range A$, then $I_{\mu}(u) = \infty$, and there is nothing to prove since we may choose $u^{(n)} \defeq u$ for all $n \in \Naturals$.
	If $u-m \in \range A$, let $v \defeq A^{\dagger} (u-m)$ and $u^{(n)} \defeq m^{(n)} + A_{n} v$.
	Then $\norm{u^{(n)} - u}_{X} \leq \norm{m^{(n)} - m}_{X} + \norm{A_{n} - A} \norm{v}_{X} \xrightarrow[n\to\infty]{} 0$.
	Since $v$ is some solution of $A_{n} x = u^{(n)} - m^{(n)}$ and $A_{n}^{\dagger} (u^{(n)} - m^{(n)})$ is its minimum norm solution (cf.\ \Cref{rem:minimum_norm_properties_pseudoinverse}), \Cref{cor:OMfunctionalForUncenteredGaussians} implies
	\[
		I_{\mu}(u)
		=
		\tfrac{1}{2}\norm{v}_{X}^{2}
		\geq
		\tfrac{1}{2}\norm{A_{n}^{\dagger} (u^{(n)} - m^{(n)})}_{X}^{2}
		=
		I_{\mu^{(n)}}(u^{(n)})
	\]
	for each $n\in\Naturals$, finalising the proof of the $\Gamma$-$\limsup$ inequality.

	In order to prove equicoercivity of the sequence $(I_{\mu^{(n)}})_{n\in\Naturals}$, let $t\in \Reals$ and
	\begin{align*}
		K_{t}
		& \defeq
		\bigcup_{n\in \Naturals} K_{t}^{(n)},
		\\
		K_{t}^{(n)}
		& \defeq
		I_{\mu^{(n)}}^{-1} ([-\infty , t])
		=
		\Set{ u \in m^{(n)} + \range A_{n} }{ \tfrac{1}{2} \norm{A_{n}^{\dagger} (u-m^{(n)})}_{X}^{2} \leq t }
		=
		m^{(n)} + A_{n} \overline{B}_{\sqrt{2t}}(0),
	\end{align*}
	where we used \Cref{cor:OMfunctionalForUncenteredGaussians}.
	We will now show that $K_{t}$ is (sequentially) precompact.
	To this end, let $(u^{(\nu)})_{\nu \in \Naturals}$ be a sequence in $K_{t}$.
	If $u^{(\nu)} \in K_{t}^{(n)}$ infinitely often for some $n \in \Naturals$, there is nothing to prove, since $A_{n}$ is a compact operator for each $n \in \Naturals$.
	Otherwise, there exist subsequences $(u^{(\nu_{j})})_{j \in \Naturals}$ and $(\mu^{(n_{j})})_{j \in \Naturals}$ such that $u^{(\nu_{j})} \in K_{t}^{(n_{j})}$ for each $j \in \Naturals$.
	Hence, $u^{(\nu_{j})} - m^{(n_{j})} \in \range A_{n_{j}}$ for each $j\in \Naturals$ and the points $v^{(j)} \defeq A_{n_{j}}^{\dagger} (u^{(\nu_{j})} - m^{(n_{j})})$ are uniformly bounded, $\norm{v^{(j)}}_{X} \leq \sqrt{2t}$ for $j \in \Naturals$.
	Since $A$ is a compact operator, the sequence $(w^{(j)})_{j \in \Naturals}$ given by $w^{(j)} \defeq A v^{(j)}$ has a subsequence that converges to some element $w \in X$. For simplicity, we denote this subsequence by $(w^{(j)})_{j \in \Naturals}$.
	It follows that
	\begin{align*}
		\norm{u^{(\nu_{j})} - m - w}_{X}
		& \leq
		\norm{u^{(\nu_{j})} - m^{(n_{j})} - w^{(j)}}_{X}
		+ \norm{m^{(n_{j})} - m}_{X}
		+ \norm{w^{(j)} - w}_{X}
		\\
		&\leq
		\underbrace{\norm{A_{n_{j}} - A}}_{\to 0}
		\underbrace{\norm{v^{(j)}}_{X}}_{\leq \sqrt{2t}}
		+ \underbrace{\norm{m^{(n_{j})} - m}_{X}}_{\to 0}
		+ \underbrace{\norm{w^{(j)} - w}_{X}}_{\to 0}
		\\
		&\xrightarrow[j\to\infty]{} 0,
	\end{align*}
	and so $(u^{(\nu)})_{\nu \in \Naturals}$ has a convergent subsequence.
	Hence, $\overline{K}_{t}$ is compact with $I_{\mu^{(n)}}^{-1} ([-\infty , t]) \subseteq \overline{K}_{t}$ for each $n\in\Naturals$, finalising the proof of equicoercivity.
\end{proof}

The following corollary is a direct consequence of \Cref{thm:convergence_of_modes,thm:GammaConvergenceForGaussianOM}:

\begin{corollary}
	Let $X$, $\mu$, $(\mu^{(n)})_{n\in\Naturals}$ be as in \Cref{thm:GammaConvergenceForGaussianOM}.
	If $u^{(n)}$ is a global weak mode of $\mu^{(n)}$, $n\in\Naturals$, then every convergent subsequence of $(u^{(n)})_{n \in \Naturals}$ has as its limit a global weak mode of $\mu$.
\end{corollary}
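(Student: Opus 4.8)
The plan is to recognise this corollary as nothing more than the Gaussian specialisation of the general principle \Cref{thm:convergence_of_modes}, so that the entire task reduces to verifying, for the family $(\mu^{(n)})_{n \in \Naturals}$ together with its limit $\mu = \mu^{(\infty)}$, the three structural hypotheses demanded there: that each measure admits an \ac{OM} functional, that the $M$-property holds for each of them (including the limit), and that the extended functionals are equicoercive and $\Gamma$-convergent to the functional of the limit. Once these are in place, \Cref{thm:convergence_of_modes} returns verbatim the assertion that every convergent subsequence of global weak modes $u^{(n)}$ has a global weak mode of $\mu$ as its limit, so there is no separate endgame to construct.

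The first step is to secure the \ac{OM} functionals and the $M$-property: here I would invoke \Cref{cor:OMfunctionalForUncenteredGaussians} once for each finite $n$ and once for the limit $\mu$. That corollary supplies the explicit functional $I_{\mu^{(n)}} = \tfrac{1}{2}\norm{(C^{(n)})^{\dagger/2}(\quark - m^{(n)})}_{X}^{2}$ on $E^{(n)} = m^{(n)} + \range (C^{(n)})^{1/2}$ and simultaneously records property $M(\mu^{(n)}, E^{(n)})$, and likewise for $\mu$; this exhausts the first two hypotheses. The second step is to produce equicoercivity and $\Gamma$-convergence: the standing assumptions $m^{(n)} \to m$ in $X$ and $C^{(n)} \to C$ in operator norm are exactly the hypotheses of \Cref{thm:GammaConvergenceForGaussianOM}, which delivers both $I_{\mu} = \Gammalim_{n \to \infty} I_{\mu^{(n)}}$ and equicoercivity of $(I_{\mu^{(n)}})_{n \in \Naturals}$ in one stroke.

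The one place I expect any care to be needed — and hence the ``main obstacle'', modest as it is — is the compatibility of representatives flagged in \Cref{remark:UniquenessOMfunctionals}: \Cref{thm:convergence_of_modes} requires that a \emph{single} choice of representative of each \ac{OM} functional witness equicoercivity and $\Gamma$-convergence at the same time, whereas \ac{OM} functionals are only determined up to additive constants. Here the compatibility is automatic, since \Cref{thm:GammaConvergenceForGaussianOM} establishes both properties for the very same canonical representatives $\tfrac{1}{2}\norm{(C^{(n)})^{\dagger/2}(\quark - m^{(n)})}_{X}^{2}$ (extended by $+\infty$ off $E^{(n)}$) and for $I_{\mu}$, with no constants introduced; no re-normalisation is required, so the hypotheses of \Cref{thm:convergence_of_modes} hold without adjustment. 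Applying that theorem with $\mu^{(\infty)} = \mu$ then closes the argument.
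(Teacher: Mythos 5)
Your proposal is correct and takes essentially the same approach as the paper, which states this corollary as a direct consequence of \Cref{thm:convergence_of_modes,thm:GammaConvergenceForGaussianOM} (with the \ac{OM} functionals and property $M$ supplied, as you note, by \Cref{cor:OMfunctionalForUncenteredGaussians}). Your additional remark on the compatibility of representatives is sound and consistent with \Cref{remark:UniquenessOMfunctionals}, though the paper does not spell it out.
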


\subsection{$B_{1}^{s}$-Besov measures}
\label{sec:Gamma_Besov1}

We now establish analogous results to those of the previous section for the class of Besov-1 measures. Besov-1 measures and Gaussian measures on infinite-dimensional spaces are analogous to Laplace distributions and normal distributions on $\R$.
Besov-1 measures have been used as sparsity-promoting or edge-preserving priors\footnote{\change{Strictly speaking, regularisation using the Besov-1 norm promotes edge-preservation for the MAP estimator but not for samples from the full posterior distribution.}} in inverse problems \citep{AgapiouBurgerDashtiHelin2018,DashtiHarrisStuart2012,LassasSaksmanSiltanen2009}.

Throughout this subsection, we use the following notation:\footnote{Typically, Besov measures are introduced on the space $L^2(\Torus^{d})$;
the same construction that we use for the components of a random sequence in $\Reals^{\Naturals}$ is used for the components of a random Fourier or wavelet expansion in $L^2(\Torus^{d})$.
In our definition, the dimension $d$ becomes superfluous and one could work with $\tilde{s} \defeq s / d$, but we continue to use the classical notation in order to reduce confusion.}

\begin{assumption}
	\label{notation:General_assumption_Besov}
	Let $s \in \Reals$, $d\in\Naturals$, \change{$\eta > 0$, $t \defeq s - d (1+\eta)$}
	and assume that $\tau \defeq (s/d + 1/2)^{-1} > 0$.
	The parameter $s$ is thought of as a ``smoothness parameter'' and $d$ as a ``spatial dimension''.
	Define $\gamma_{0} \defeq 1$ and $\gamma,\delta \in \Reals^{\Naturals}$ by
	\change{
	\[
		\gamma_{k}
		\defeq
		k^{1 - \tfrac{1}{\tau}},
		\qquad
		\delta_{k}
		\defeq
		k^{2 + \eta - \tfrac{1}{\tau}},
		\qquad
		k\in\Naturals,
	\]
	}
	and let $\mu_{k} \in \prob{\R}$ for $k \in \Naturals \cup \{ 0 \}$ have the Lebesgue density
	\change{
	\[
		\frac{\rd \mu_{k}}{\rd u} (u) = Z_{1}\, \gamma_{k}^{-1} \exp(-\absval{ u / \gamma_{k}}),
		\qquad
		Z_1\defeq \left(\int_{\Reals}\exp(-\absval{x}) \, \rd x\right)^{-1} = \frac{1}{2 \Gamma (2)}.
	\]
	}
\end{assumption}

We define the Besov measure $\change{B_{1}^{s}}$ as follows, using notation that is an adaptation of that of \citet{DashtiHarrisStuart2012} and \citet{AgapiouBurgerDashtiHelin2018}.

\begin{definition}[Sequence space Besov measures and Besov spaces]
	\label{def:Besov_space_and_measure_sequence}
	Using \Cref{notation:General_assumption_Besov}, we call $\mu \defeq \bigotimes_{k \in \Naturals} \mu_{k}$ a (\defterm{sequence space}) \defterm{Besov measure} on $\Reals^{\Naturals}$ and write $\change{ B^{s}_{1} \defeq \mu }$.
	The corresponding \defterm{Besov space} is the weighted sequence space $\change{ (X^{s}_{1},\norm{\quark}_{X^{s}_{1}}) \defeq (\ell^{1}_{\gamma},\norm{\quark}_{\ell^{1}_{\gamma}}) }$.
\end{definition}

Since it is the parameter \change{``$p=1$''} that most strongly affects the qualitative properties of the measure, we often refer simply to a \change{``Besov-$1$ measure''} for any measure in the above class, regardless of the values of $s$, $d$, etc.

\begin{lemma}
	\label{lemma:Besov_space_with_full_measure}
	Let \change{$\mu = B^{s}_{1}$} be the Besov measure defined above and \change{$X = X_{1}^{t} = \ell_{\delta}^{1}$}.
	Then $\mu(X) = 1$.
\end{lemma}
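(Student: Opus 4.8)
The goal is to show that the Besov measure $\mu = B_1^s$ assigns full mass to the space $X = X_1^t = \ell_\delta^1$. Since $\mu$ is a product measure $\bigotimes_{k\in\Naturals}\mu_k$ on $\Reals^{\Naturals}$, the natural strategy is to control $\norm{\quark}_{\ell_\delta^1}$ in expectation and then invoke a Borel--Cantelli or integrability argument. The plan is to show that $\bE_\mu[\norm{u}_{\ell_\delta^1}] < \infty$, which immediately forces $\norm{u}_{\ell_\delta^1} < \infty$ $\mu$-almost surely, hence $\mu(X) = 1$.

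First I would write out the norm explicitly. By definition of the weighted space, $\norm{u}_{\ell_\delta^1} = \sum_{k\in\Naturals} \absval{u_k}/\delta_k$. Since $u_k$ is distributed according to $\mu_k$ with density $Z_1\,\gamma_k^{-1}\exp(-\absval{u/\gamma_k})$, it is a scaled Laplace random variable, so $\bE_{\mu_k}[\absval{u_k}] = c\,\gamma_k$ for an absolute constant $c$ (namely the first absolute moment of a standard two-sided exponential). By Tonelli's theorem, since all summands are nonnegative,
\[
	\bE_\mu\bigl[ \norm{u}_{\ell_\delta^1} \bigr]
	=
	\sum_{k\in\Naturals} \frac{\bE_{\mu_k}[\absval{u_k}]}{\delta_k}
	=
	c \sum_{k\in\Naturals} \frac{\gamma_k}{\delta_k}.
\]

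The crux is therefore to verify convergence of the scalar series $\sum_k \gamma_k/\delta_k$. Substituting the definitions from \Cref{notation:General_assumption_Besov}, namely $\gamma_k = k^{1 - 1/\tau}$ and $\delta_k = k^{2+\eta - 1/\tau}$, the ratio simplifies: the common $-1/\tau$ exponent cancels, giving
\[
	\frac{\gamma_k}{\delta_k}
	=
	k^{(1 - 1/\tau) - (2 + \eta - 1/\tau)}
	=
	k^{-(1 + \eta)}.
\]
Since $\eta > 0$, the series $\sum_{k\in\Naturals} k^{-(1+\eta)}$ converges (it is a $p$-series with exponent $1 + \eta > 1$). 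Hence $\bE_\mu[\norm{u}_{\ell_\delta^1}] = c\sum_k k^{-(1+\eta)} < \infty$, so $\norm{u}_{\ell_\delta^1}$ is finite for $\mu$-almost every $u$, which is exactly the statement $\mu(X) = 1$.

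I do not anticipate a serious obstacle here: the argument is essentially a first-moment bound combined with the deliberate choice of the weights $\delta_k$ in the assumption, which were evidently engineered so that $\gamma_k/\delta_k = k^{-(1+\eta)}$ is summable. The only point requiring mild care is confirming that $\bE_{\mu_k}[\absval{u_k}]$ scales linearly in $\gamma_k$ — this follows from the change of variables $x = u/\gamma_k$ in the density, which turns $\mu_k$ into a $\gamma_k$-rescaling of the fixed standard Laplace law, so the first absolute moment picks up exactly one factor of $\gamma_k$. One should also note that finiteness of the expectation of a nonnegative function is enough to conclude almost-sure finiteness, so no stronger integrability or tail estimate is needed.
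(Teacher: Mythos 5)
Your proof is correct, and it takes a genuinely different route from the paper. The paper does not argue directly at all: its entire proof is a citation of \citet[Lemma~2]{LassasSaksmanSiltanen2009}, specialised to the case $p=1$. Your argument is self-contained and elementary: the scaling $\bE_{\mu_k}[\absval{u_k}] = c\,\gamma_k$ is right (with $c=1$ for the standard Laplace density $\tfrac{1}{2}e^{-\absval{x}}$), Tonelli applies since all terms are nonnegative, the exponent computation $\gamma_k/\delta_k = k^{-(1+\eta)}$ is exactly correct, and finiteness of $\bE_\mu[\norm{u}_{\ell^1_\delta}]$ does imply $\mu$-a.s.\ finiteness of the norm, i.e.\ $\mu(X)=1$. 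What the citation buys the paper is generality and sharpness: the cited lemma covers all Besov-$p$ measures, $1 \leq p < \infty$, and gives an if-and-only-if characterisation of which weighted spaces carry full measure, whereas your first-moment bound only gives the sufficiency direction (which is all that is needed here). What your approach buys is transparency --- it makes visible that the weights $\delta_k$ were engineered precisely so that $\gamma_k/\delta_k$ is summable. Two minor points you could make explicit for completeness: first, $u \mapsto \norm{u}_{\ell^1_\delta} = \sum_k \absval{u_k}/\delta_k$ is $\Borel{\Reals^{\Naturals}}$-measurable as a monotone limit of measurable partial sums, so the set $X$ is measurable and $\mu(X)$ is well defined; second, the paper subsequently regards $\mu$ as a measure on the normed space $X$ itself, which requires the additional (separately cited) fact that $\Borel{\ell^1_\delta} \subseteq \Borel{\Reals^{\Naturals}}$, but that is outside the scope of this lemma.
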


\begin{proof}
	This is a restatement of \citet[Lemma~2]{LassasSaksmanSiltanen2009} \change{for particular case $p=1$}.
\end{proof}

From now on we will consider the Besov measure \change{$\mu = B^{s}_{1}$} on the normed spaces \change{$X = X_{1}^{t} = \ell_{\delta}^{1}$}.
This is possible since, by \citep[Lemma~B.1]{AyanbayevKlebanovLieSullivan2021_II}, \change{$\Borel{\ell_{\delta}^{1}} \subseteq \Borel{\Reals^{\Naturals}}$}, where we consider the product topology on $\Reals^{\Naturals}$.

\begin{proposition}
	\label{prop:OM_Besov_1}
	Let $\mu = B^{s}_{1}$ be a $B_{1}^{s}$-Besov measure on the space $X = X_{1}^{t} = \ell_{\delta}^{1}$.
	Then, for $E = X_{1}^{s}=\ell^{1}_{\gamma}$, property $M(\mu, E)$ is satisfied and the \ac{OM} functional $I_{\mu} \colon X \to \eReals$ of $\mu$ is given by
	\begin{equation}
		\label{equ:OM_for_Besov}
		I_{\mu}(u)
		=
		\begin{cases}
			\norm{u}_{X_{1}^{s}} & \text{for } u\in E,
			\\
			\infty & \text{otherwise.}
		\end{cases}
	\end{equation}
\end{proposition}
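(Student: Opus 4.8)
The plan is to compute the small-ball ratios $\mu(\cBall{x_1}{r})/\mu(\cBall{x_2}{r})$ directly, exploiting that $\mu=\bigotimes_{k}\mu_k$ is a product of symmetric Laplace laws (\Cref{notation:General_assumption_Besov}) and that the ambient norm $\norm{\quark}_{\ell^1_\delta}$ splits additively across any partition of the coordinates. I take the reference point $x^{\star}=0\in E$; the normalising constants $Z_1$ and the scalings $\gamma_k^{-1}$ cancel in every ratio, which is why \eqref{equ:OM_for_Besov} carries no additive constant. A preliminary observation, used repeatedly, is that $\mu(\cBall{z}{r})>0$ for every $z\in X$ and $r>0$ (so in fact $\supp(\mu)=X$): splitting the radius between a finite block and its tail, the finite block has positive mass because the Laplace densities are strictly positive, while a sufficiently far tail contains a \emph{centred} ball of positive mass via a convergent product estimate $\prod_{k>N}(1-e^{-c k^{\eta/2} r})>0$.

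For $x_1,x_2\in E=\ell^1_\gamma$ put $h\defeq x_1-x_2\in\ell^1_\gamma$; the first step is to show that the translate $\mu(\quark-h)$ is equivalent to $\mu$ with a bounded, continuous density. The finite-dimensional density ratios $g_h^{(N)}(v)\defeq\prod_{k\le N}\tfrac{\rd\mu_k}{\rd u}(v_k+h_k)\big/\tfrac{\rd\mu_k}{\rd u}(v_k)$ form a nonnegative mean-one martingale under $\mu$; since $\bigl||v_k|-|v_k+h_k|\bigr|\le|h_k|$, each factor lies in $[e^{-|h_k|/\gamma_k},e^{|h_k|/\gamma_k}]$ and, as $\sum_k|h_k|/\gamma_k=\norm{h}_{X^s_1}<\infty$, the martingale is uniformly bounded and converges in $L^1(\mu)$ to the Radon--Nikodym derivative $g_h=\prod_k\tfrac{\rd\mu_k}{\rd u}(\quark+h_k)/\tfrac{\rd\mu_k}{\rd u}(\quark)$. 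The same summable bound shows $g_h$ is continuous on $\ell^1_\delta$ (convergence in $\ell^1_\delta$ forces coordinatewise convergence, and dominated convergence applies to the series $\log g_h=\sum_k(|\quark_k|-|\quark_k+h_k|)/\gamma_k$). Since $\cBall{x_1}{r}=\cBall{x_2}{r}+h$, we obtain $\mu(\cBall{x_1}{r})=\int_{\cBall{x_2}{r}}g_h\,\rd\mu$, and continuity of $g_h$ at $x_2$ together with $\mu(\cBall{x_2}{r})>0$ yields
\[
\frac{\mu(\cBall{x_1}{r})}{\mu(\cBall{x_2}{r})}=\frac{1}{\mu(\cBall{x_2}{r})}\int_{\cBall{x_2}{r}}g_h\,\rd\mu\xrightarrow[r\searrow 0]{}g_h(x_2)=\exp\bigl(\norm{x_2}_{X^s_1}-\norm{x_1}_{X^s_1}\bigr),
\]
which is precisely \eqref{eq:Onsager--Machlup} with $I=\norm{\quark}_{X^s_1}$.

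It remains to establish property $M(\mu,E)$: for $x\in X\setminus E$ (so $\norm{x}_{X^s_1}=\infty$ while $x\in\ell^1_\delta$) the ratio $\mu(\cBall{x}{r})/\mu(\cBall{0}{r})$ must vanish as $r\searrow 0$. Here $\mu(\quark-x)$ is singular, so the previous argument fails; instead I truncate. Write $x=h^{(N)}+t^{(N)}$ with $h^{(N)}$ the first $N$ coordinates of $x$ (so $h^{(N)}\in E$) and $t^{(N)}$ the tail, and set $S_N\defeq\norm{h^{(N)}}_{X^s_1}=\sum_{k\le N}|x_k|/\gamma_k\to\infty$. Using the additive splitting of the norm and conditioning on the first $N$ coordinates, the tail factor of $\mu(\cBall{x}{r})$ is, for each value of the first block, a ball of the tail product measure centred at $t^{(N)}$, whereas for $h^{(N)}$ it is centred at $0$; Anderson's inequality (the tail measure is symmetric with log-concave, hence quasi-concave, density, and the balls are symmetric and convex) gives $\mu(\cBall{x}{r})\le\mu(\cBall{h^{(N)}}{r})$ for every $r>0$. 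Dividing by $\mu(\cBall{0}{r})$ and letting $r\searrow 0$, the right-hand side tends to $\exp(-S_N)$ by the previous paragraph, so $\limsup_{r\searrow 0}\mu(\cBall{x}{r})/\mu(\cBall{0}{r})\le e^{-S_N}$ for every $N$; letting $N\to\infty$ gives the limit $0$ demanded by \eqref{eq:liminf_small_ball_prob}.

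The two points needing the most care are both places where infinite-dimensionality could derail the routine finite-dimensional intuition. First, the differentiation step of the second paragraph is \emph{not} an appeal to a Lebesgue differentiation theorem (which is unavailable on $\ell^1_\delta$) but rests entirely on genuine continuity of $g_h$ in the ambient $\ell^1_\delta$-topology; establishing this, rather than mere coordinatewise continuity, together with the positivity $\mu(\cBall{x_2}{r})>0$, is essential. Second, Anderson's inequality must be applied to the infinite-dimensional tail product measure and the (non-smooth, cross-polytope-shaped) weighted $\ell^1$ balls; I would justify this by applying the classical finite-dimensional inequality on coordinate blocks $N<k\le M$ and passing to the limit $M\to\infty$, using that these balls are increasing limits of cylinder sets. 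This Anderson comparison is the conceptual key that reduces the otherwise delicate singular small-ball estimate to a one-line consequence of the regular ($h\in E$) computation, and is the step I would scrutinise most carefully.
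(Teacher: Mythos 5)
Your proposal is correct in substance, but it takes a genuinely different route from the paper. The paper's own proof of \Cref{prop:OM_Besov_1} is essentially a citation: the formula for the \ac{OM} functional on $E$ is quoted from \citet[Theorem~3.9]{AgapiouBurgerDashtiHelin2018}, and property $M(\mu,E)$ is quoted from Part~II \citep[Theorem~4.9]{AyanbayevKlebanovLieSullivan2021_II}, with the hypotheses of that theorem checked against \Cref{def:Besov_space_and_measure_sequence} and \Cref{lemma:Besov_space_with_full_measure}. You instead prove both halves from scratch. Your first half is a Kakutani-type argument: for $h \in \ell^{1}_{\gamma}$ the translate $\mu(\quark - h)$ is equivalent to $\mu$ with a bounded density $g_{h}$, identified via a uniformly bounded martingale of finite-dimensional density ratios, and the small-ball ratio converges to $g_{h}(x_{2})$ because $g_{h}$ is continuous in the ambient $\ell^{1}_{\delta}$ topology and $\mu(\cBall{x_{2}}{r}) > 0$ --- so the normalised integral is an average of a function that is nearly constant on small balls, with no appeal to any Lebesgue differentiation theorem. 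This is essentially the mechanism behind the cited result of \citet{AgapiouBurgerDashtiHelin2018}, and your execution (including $\supp(\mu) = X$ via the convergent tail product) is sound. Your second half --- truncating $x \in X \setminus E$ into a head $h^{(N)} \in E$ with $S_{N} = \norm{h^{(N)}}_{X^{s}_{1}} \to \infty$ plus a tail, and dominating $\mu(\cBall{x}{r}) \leq \mu(\cBall{h^{(N)}}{r})$ by Anderson's inequality applied to the symmetric log-concave tail measure --- is the more original part relative to the paper, which defers the $M$-property to the general machinery of Part~II. What your route buys is a self-contained proof exploiting the specific Laplace structure; what the paper's route buys is brevity and a statement that follows from hypotheses (product structure, $\mu(X)=1$) verified once in a general framework covering all Besov-$p$ and related priors.

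One spot in your write-up needs tightening: the passage of Anderson's inequality to the infinite-dimensional tail. The weighted-$\ell^{1}$ balls are not increasing limits of cylinder sets, as you assert; rather, the \emph{closed} ball $\ccBall{c}{s}$ is a \emph{decreasing} limit of cylinder sets. The clean argument is: condition on the coordinates beyond $M$ and apply finite-dimensional Anderson on the block $(N, M]$ to get $\mu_{>N}(\ccBall{t^{(N)}}{s}) \leq \mu_{>N}(\ccBall{c^{(M)}}{s})$, where $c^{(M)}$ zeroes the coordinates up to $M$; since $\norm{c^{(M)}} \to 0$, the inclusion $\ccBall{c^{(M)}}{s} \subseteq \ccBall{0}{s + \norm{c^{(M)}}}$ and continuity from above of $\mu_{>N}$ give the closed-ball inequality $\mu_{>N}(\ccBall{t^{(N)}}{s}) \leq \mu_{>N}(\ccBall{0}{s})$; finally, writing the open ball as the increasing union of closed balls of radii $s - 1/j$ transfers the inequality to open balls, after which your Fubini decomposition yields $\mu(\cBall{x}{r}) \leq \mu(\cBall{h^{(N)}}{r})$ exactly as claimed. (Alternatively, one may invoke Borell's theory of symmetric convex measures, which gives Anderson's inequality directly on infinite-dimensional spaces, or use \Cref{prop:Open_Closed_Coincide} to move between open- and closed-ball ratios at the end.) With this repair your argument is complete.
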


\begin{proof}
	The \ac{OM} functional formula on $E$ follows from \cite[Theorem~3.9]{AgapiouBurgerDashtiHelin2018}, while property $M(\mu, E)$ follows from \cite[Theorem~4.9]{AyanbayevKlebanovLieSullivan2021_II}. The assumptions of this theorem are fulfilled, given \Cref{def:Besov_space_and_measure_sequence} and \Cref{lemma:Besov_space_with_full_measure}. %% CROSSREF
\end{proof}

\begin{remark}
	\Cref{prop:OM_Besov_1} uses and extends \cite[Theorem~3.9]{AgapiouBurgerDashtiHelin2018}.
	The authors write that ``the space $B_{1}^{s}(\mathbb{T}^d)$ here, is the largest space on which the Onsager--Machlup functional is defined''.
	This claim is intuitively true, since $\norm{h}_{X_{1}^{s}}=+\infty$ if $h\notin X_{1}^{s}=E$, and in our notation $X_{1}^{s}$ corresponds to $B_{1}^{s}(\mathbb{T}^{d})$.
	However, one must not a priori exclude the possibility that $I_{\mu}$ can have a different formula outside of $E$.
	Property $M(\mu, E)$ in the above proof is one way to guarantee that the claim is true.
\end{remark}

We now give a $\Gamma$-convergence and equicoercivity result for sequences of Besov-1 measures with converging smoothness parameters.

\begin{theorem}[$\mathsf{\Gamma}$-convergence and equicoercivity of \ac{OM} functionals for Besov-1 measures]
	\label{thm:Gamma_convergence_OM_Besov}
	Let $\mu^{(n)} \defeq B^{s^{(n)}}_{1}$, $n\in\Naturals \cup \{ +\infty \}$, be centered Besov measures such that $s^{(n)}\to s^{(\infty)}$.
	Then there exists $n_{0} \in \Naturals$ such that, for each $n \geq n_{0}$, $\mu^{(n)}(\ell^1_{\delta^{(\infty)}}) = 1$ and we therefore consider these measures on $X = X_{1}^{t^{(\infty)}} = \ell^1_{\delta^{(\infty)}}$ (after dropping the first $n_{0}-1$ measures).
	Then, for the \ac{OM} functionals $I_{\mu^{( n )}} = \norm{\quark}_{X_{1}^{s^{(n)}}} \colon X \to \eReals$, $n\in \{ n' \in \Naturals \mid n'\geq n_{0} \} \cup \{ +\infty \}$, given by \eqref{equ:OM_for_Besov}, the sequence $(I_{\mu^{(n)}})_{n \geq n_{0}}$ is equicoercive and $I_{\mu^{(\infty)}} = \Gammalim_{n \to \infty} I_{\mu^{( n )}}$.
\end{theorem}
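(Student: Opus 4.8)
The plan is to establish the three claims separately---full measure on the common space $X = \ell^1_{\delta^{(\infty)}}$, equicoercivity, and the two $\Gamma$-inequalities---exploiting the fully explicit form of the functionals. Writing $\tfrac{1}{\tau^{(n)}} = \tfrac{s^{(n)}}{d} + \tfrac12$, one has $\gamma^{(n)}_k = k^{1/2 - s^{(n)}/d}$ and $\delta^{(\infty)}_k = k^{3/2 + \eta - s^{(\infty)}/d}$, so that $I_{\mu^{(n)}}(u) = \norm{u}_{X_1^{s^{(n)}}} = \sum_{k \in \Naturals} \absval{u_k}\, k^{s^{(n)}/d - 1/2}$, and convergence in $X$ is precisely weighted $\ell^1$-convergence with weights $1/\delta^{(\infty)}_k$. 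Since $s^{(n)} \to s^{(\infty)}$, I would fix once and for all an index $n_0$ with $\absval{s^{(n)} - s^{(\infty)}} \le d\eta/2$ for all $n \ge n_0$; the slack $\eta > 0$ is exactly what makes the common-space and equicoercivity arguments go through.

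For the full-measure claim, a sample $u$ from $\mu^{(n)}$ has independent coordinates $u_k = \gamma^{(n)}_k \xi_k$ with the $\xi_k$ i.i.d.\ of unit mean absolute value, so $\bE_{\mu^{(n)}} \norm{u}_{\ell^1_{\delta^{(\infty)}}} = \sum_k \gamma^{(n)}_k / \delta^{(\infty)}_k = \sum_k k^{-1 - \eta + (s^{(\infty)} - s^{(n)})/d}$, whose exponent is $< -1$ for $n \ge n_0$. Hence the series converges, $\norm{u}_{\ell^1_{\delta^{(\infty)}}} < \infty$ almost surely, and $\mu^{(n)}(\ell^1_{\delta^{(\infty)}}) = 1$; equivalently one may re-apply \Cref{lemma:Besov_space_with_full_measure} to $B_1^{s^{(n)}}$ with the admissible parameter $\eta^{(n)} \defeq \eta + (s^{(n)} - s^{(\infty)})/d > 0$, which reproduces exactly the weight $\delta^{(\infty)}$. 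This justifies restricting all functionals to the common domain $X$.

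For equicoercivity I would invoke the standard criterion that a subset of the weighted space $\ell^1_{\delta^{(\infty)}}$ is relatively compact iff it is bounded and has uniformly decaying tails. Given $t \in \Reals$ and $u$ with $I_{\mu^{(n)}}(u) \le t$, the key factorisation $\absval{u_k}/\delta^{(\infty)}_k = (\absval{u_k}\, k^{s^{(n)}/d - 1/2})\, k^{(s^{(\infty)} - s^{(n)})/d - 1 - \eta}$ together with $(s^{(\infty)} - s^{(n)})/d \le \eta/2$ bounds the second factor by $k^{-1-\eta/2}$ for every $n \ge n_0$. Summing gives $\norm{u}_{\ell^1_{\delta^{(\infty)}}} \le t$ and $\sum_{k > N} \absval{u_k}/\delta^{(\infty)}_k \le (N+1)^{-1-\eta/2}\, t \to 0$ uniformly in $n \ge n_0$ and in $u$ in the sublevel set; thus $\overline{\bigcup_{n \ge n_0} \{ I_{\mu^{(n)}} \le t \}}$ is compact, which is equicoercivity.

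Finally, the $\Gamma$-convergence. For the $\Gamma$-$\liminf$ inequality, take $u^{(n)} \to u$ in $X$; this forces componentwise convergence $u^{(n)}_k \to u_k$, and since $k^{s^{(n)}/d - 1/2} \to k^{s^{(\infty)}/d - 1/2}$ for each fixed $k$, every finite partial sum satisfies $\sum_{k=1}^N \absval{u_k}\, k^{s^{(\infty)}/d - 1/2} = \lim_n \sum_{k=1}^N \absval{u^{(n)}_k}\, k^{s^{(n)}/d - 1/2} \le \liminf_n I_{\mu^{(n)}}(u^{(n)})$; letting $N \to \infty$ yields $I_{\mu^{(\infty)}}(u) \le \liminf_n I_{\mu^{(n)}}(u^{(n)})$. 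For the $\Gamma$-$\limsup$ inequality I would build a recovery sequence by truncation: for $u \in X_1^{s^{(\infty)}}$ the truncations $P_N u$ satisfy $I_{\mu^{(n)}}(P_N u) \to I_{\mu^{(\infty)}}(P_N u) \le I_{\mu^{(\infty)}}(u)$ as $n \to \infty$ for each fixed $N$, while $P_N u \to u$ in $X$ as $N \to \infty$; a diagonal choice $N(n) \to \infty$ then gives $u^{(n)} \defeq P_{N(n)} u \to u$ with $\limsup_n I_{\mu^{(n)}}(u^{(n)}) \le I_{\mu^{(\infty)}}(u)$ (the case $I_{\mu^{(\infty)}}(u) = \infty$ being trivial). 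I expect the $\Gamma$-$\limsup$ step to be the main obstacle: the constant recovery sequence $u^{(n)} = u$ fails whenever $s^{(n)} > s^{(\infty)}$, since the heavier weights can make $I_{\mu^{(n)}}(u) = \infty$ even though $I_{\mu^{(\infty)}}(u) < \infty$; the truncation-plus-diagonal device is what repairs this, and some care is needed to keep the diagonal extraction compatible with convergence in $X$.
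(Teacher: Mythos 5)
Your proof is correct, and its opening move is the paper's own: the same choice of $n_{0}$ with $\absval{s^{(n)} - s^{(\infty)}} \leq d\eta^{(\infty)}/2$ and the reparametrisation $\eta^{(n)} \defeq \eta^{(\infty)} + (s^{(n)}-s^{(\infty)})/d > 0$, which leaves $t^{(n)} = t^{(\infty)}$ and hence puts all measures on $X = \ell^{1}_{\delta^{(\infty)}}$ via \Cref{lemma:Besov_space_with_full_measure}; your expectation computation $\sum_{k} \gamma^{(n)}_{k}/\delta^{(\infty)}_{k} < \infty$ is a valid independent check of the same fact. The two later steps, however, take genuinely different routes. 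For equicoercivity, the paper encloses every sublevel set in the box $K_{\theta} = \prod_{k}[-\overline{\gamma}_{k}\theta, \overline{\gamma}_{k}\theta]$ with $\overline{\gamma}_{k} = k^{1/2 - \overline{s}/d}$, $\overline{s} = s^{(\infty)} - d\eta^{(\infty)}/2$, and proves the box precompact as the image of the $\ell^{\infty}$ unit ball under $T x = (\overline{\gamma}_{k} x_{k})_{k}$, compact because its finite-rank truncations converge to it in operator norm; you instead verify the classical $\ell^{1}$ compactness criterion (boundedness plus uniformly vanishing tails) directly on the union of the sublevel sets. These are morally the same estimate --- the operator-norm convergence $\norm{T_{m} - T} \to 0$ \emph{is} a uniform tail bound --- but the paper's version is self-contained, while yours leans on a standard criterion that you should at least cite (it follows from the $\ell^{1}$ case via the isometry $\ell^{1}_{\delta^{(\infty)}} \cong \ell^{1}$). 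The more substantial divergence is the $\Gamma$-$\limsup$: the paper uses the rescaled recovery sequence $u^{(n)}_{k} \defeq \gamma^{(n)}_{k} u_{k} / \gamma^{(\infty)}_{k}$, which satisfies $I_{\mu^{(n)}}(u^{(n)}) = I_{\mu^{(\infty)}}(u)$ exactly for every $n$, so the entire burden falls on showing $u^{(n)} \to u$ in $X$, for which the paper proves $\norm{\gamma^{(n)} - \gamma^{(\infty)}}_{X} \to 0$ via the reverse Fatou lemma. Your truncation-plus-diagonal recovery sequence avoids that weight-convergence lemma and the reverse Fatou argument altogether; moreover, the ``care'' you flag at the end is in fact unproblematic, since $\norm{P_{N}u - u}_{X} = \sum_{k>N}\absval{u_{k}}/\delta^{(\infty)}_{k} \to 0$ holds for \emph{any} $N(n) \to \infty$ independently of $n$, so the diagonal choice is constrained only by the energy bound $I_{\mu^{(n)}}(P_{N(n)}u) \leq I_{\mu^{(\infty)}}(u) + 1/N(n)$. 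The price is a less explicit recovery sequence. The $\Gamma$-$\liminf$ arguments coincide up to phrasing (the paper applies Fatou's lemma to the ratios $u^{(n)}_{k}/\gamma^{(n)}_{k}$; you run the equivalent finite-partial-sum argument), and both need only pointwise convergence of the weights there.
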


\begin{proof}
Since $s^{(n)}\to s^{(\infty)}$ there exists $n_{0} \in \Naturals$ such that, for $n \geq n_{0}$,
$\absval{s^{(n)} - s^{(\infty)}} \leq \tfrac{d \eta^{(\infty)}}{2}$.
Therefore, for $n \geq n_{0}$, we may choose $\eta^{(n)} > 0$ such that
$t^{(n)} = s^{(n)} - d(1+\eta^{(n)}) = s^{(\infty)} - d(1+\eta^{(\infty)}) = t^{(\infty)}$ and consider $\mu^{(n)}$ as a measure on $X = X_{1}^{t^{(\infty)}} = \ell^1_{\delta^{(\infty)}}$ by \Cref{lemma:Besov_space_with_full_measure}.
Without loss of generality, we assume $n_{0} = 1$ from now on in order to simplify notation.
Since $s^{(n)} \geq \overline{s} \defeq s^{(\infty)} - \tfrac{d \eta^{(\infty)}}{2}$,
\[
\gamma_{k}^{(n)}
=
k^{-\tfrac{s^{(n)}}{d} + \tfrac{1}{2}}
\leq
k^{-\tfrac{\overline{s}}{d} + \tfrac{1}{2}}
\eqqcolon
\overline{\gamma}_{k},
\qquad
k,n \in\Naturals,
\]
and, for any $\theta \geq 0$ and $n \in \Naturals$,
\[
I_{\mu^{(n)}}^{-1}([-\infty,\theta])
\defeq
\Set{u\in X}{\sum_{k\in\Naturals} \frac{\absval{u_k}}{\gamma^{(n)}_k}\leq \theta}
\subseteq
\prod_{k \in \Naturals} [ - \overline{\gamma}_{k} \theta , \overline{\gamma}_{k} \theta ]
\eqqcolon
K_{\theta}.
\]
We will now show that $K_{\theta} \subseteq X$ is precompact.
For this purpose, we define the operators
\[
T,T_{m}\colon \ell^{\infty} \to X = \ell_{\delta^{(\infty)}}^{1},
\qquad
T(x) = (\overline{\gamma}_{k}x_{k})_{k\in\Naturals},
\qquad
T_{m}(x) = (\overline{\gamma}_{k}x_{k})_{k=1,\dots,m},
\qquad
m\in\Naturals.
\]
All $T_{m}$ are finite-rank operators that converge to $T$ in the operator norm:
\[
\norm{T_{m} - T}
=
\sup_{\norm{x}_{\ell^{\infty}} \leq 1}
\sum_{k > m} \frac{\overline{\gamma}_{k}}{\delta^{(\infty)}_{k} } \absval{ x_{k}}
\leq
\sum_{k > m} k^{-1-\tfrac{\eta^{(\infty)}}{2}}
\xrightarrow[m\to\infty]{}
0.
\]
Therefore, $T$ is a compact operator and $K_{\theta} = \theta \, T \cBallExtra{0}{1}{\ell^{\infty}}$ is precompact, finalising the proof of equicoercivity.
Note that for $\theta < 0$ there is nothing to prove, since $I_{\mu^{(n)}}^{-1}([-\infty,\theta]) = \emptyset$ for each $n \in \Naturals \cup \{ \infty \}$ in this case.

In order to prove the $\Gamma$-convergence statement, we will first show that $\norm{\gamma^{(n)}-\gamma^{(\infty)}}_{X}\to 0$.
Since the sequences $a^{(n)} \defeq (k^{-1-\eta} \absval{k^{\frac{s^{(\infty)}-s^{(n)}}{d}}-1})_{k\in\Naturals}$ are uniformly bounded by the summable sequence $a = (2k^{-1-\frac{\eta^{(\infty)}}{2}})_{k\in\Naturals}$ (where we used $\absval{s^{(n)} - s^{(\infty)}} \leq \tfrac{d \eta^{(\infty)}}{2}$), the reverse Fatou lemma implies $\norm{\gamma^{(n)}-\gamma^{(\infty)}}_{X}\to 0$ via
\begin{align*}
	\limsup_{n\to\infty}\norm{\gamma^{(n)}-\gamma^{(\infty)}}_{X}
	& =
	\limsup_{n\to\infty} \sum_{k\in\Naturals} k^{-1-\eta} \Absval{k^{\frac{s^{(\infty)}-s^{(n)}}{d}}-1} \\
	& \leq \sum_{k\in\Naturals} \limsup_{n\to\infty} k^{-1-\eta} \Absval{k^{\frac{s^{(\infty)}-s^{(n)}}{d}}-1} \\
	& =	0.
\end{align*}

For the $\Gamma$-$\liminf$ inequality, it follows from $\norm{u^{(n)}-u^{(\infty)}}_{X}\to 0$ and $\norm{\gamma^{(n)}-\gamma^{(\infty)}}_{X}\to 0$ that $\frac{u^{(n)}_k}{\gamma^{(n)}_k}\to\frac{u^{(\infty)}_k}{\gamma^{(\infty)}_k}$ for all $k\in\Naturals$.
Thus, by Fatou's lemma,
\begin{align*}
	I_{\mu^{(\infty)}}(u^{(\infty)})
	& = \|u^{(\infty)}\|_{\ell^{1}_{\gamma^{(\infty)}}} \\
	& = \sum_{k\in\Naturals} \frac{\absval{u^{(\infty)}_k}}{\gamma^{(\infty)}_k} \\
	& = \sum_{k\in\Naturals} \liminf_{n\to\infty}\frac{\absval{u^{(n)}_k}}{\gamma^{(n)}_k} \\
	& \leq \liminf_{n\to\infty} \sum_{k\in\Naturals} \frac{\absval{u^{(n)}_k}}{\gamma^{(n)}_k}  \\
	& =\liminf_{n\to\infty} I_{\mu^{(n)}}(u^{(n)}).
\end{align*}

For the $\Gamma$-$\limsup$ inequality, note that, if $I_{\mu^{(\infty)}}(u^{(\infty)})=\infty$, then there is nothing to prove.
Therefore, let us assume that $I_{\mu^{(\infty)}}(u^{(\infty)})<\infty$, and define $u^{(n)}$ by $u^{(n)}_k = \gamma^{(n)}_k\frac{u^{(\infty)}_k}{\gamma^{(\infty)}_k}$, $k\in\Naturals$.
Then
\[
I_{\mu^{(n)}}(u^{(n)})
=
\sum_{k\in\Naturals} \frac{\absval{u^{(n)}_k}}{\gamma^{(n)}_k}
=
\sum_{k\in\Naturals} \frac{\absval{u^{(\infty)}_k}}{\gamma^{(\infty)}_k}
=
I_{\mu^{(\infty)}}(u^{(\infty)})
<
\infty,
\]
and $\limsup_{n \to \infty} I_{\mu^{(n)}}(u^{(n)}) \leq I_{\mu^{(\infty)}}(u^{(\infty)})$.
Additionally,
\begin{align*}
	\bignorm{ u^{(n)}-u^{(\infty)} }_{X}
	& = \sum_{k\in\Naturals} \frac{\absval{ u^{(n)}_k-u^{(\infty)}_k }}{\delta^{(\infty)}_k}\\
	& =
	\sum_{k\in\Naturals}  \frac{\absval{ u^{(\infty)}_k } }{\gamma^{(\infty)}_k} \frac{\bigabsval{ \gamma^{(n)}_k-\gamma^{(\infty)}_k }}{\delta^{(\infty)}_k}\\
	& \leq
	I_{\mu^{(\infty)}}(u^{(\infty)}) \norm{\gamma^{(n)}-\gamma^{(\infty)}}_{X}\to 0 &  \text{as $n \to \infty$,}
\end{align*}
finalising the proof of the $\Gamma$-$\limsup$ inequality.
\end{proof}

The following corollary is now a direct consequence of \Cref{thm:convergence_of_modes,thm:Gamma_convergence_OM_Besov}:

\begin{corollary}
	Let $X$, $(\mu^{(n)})_{n\in\Naturals\cup \{ \infty \} }$ be as in \Cref{thm:Gamma_convergence_OM_Besov}.
	If, for each $n \in \Naturals$, $u^{(n)}$ is a global weak mode of $\mu^{(n)}$, then every convergent subsequence of $(u^{(n)})_{n \in \Naturals}$ has as its limit a global weak mode of $\mu^{(\infty)}$.
\end{corollary}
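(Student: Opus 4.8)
The plan is to recognise this corollary as a direct instance of the general mode-convergence principle \Cref{thm:convergence_of_modes}, whose hypotheses are supplied verbatim by the two results just established. Concretely, \Cref{thm:convergence_of_modes} requires, for each $n \in \Naturals \cup \{ \infty \}$, that $\mu^{(n)}$ possess an \ac{OM} functional $I_{\mu^{(n)}} \colon E^{(n)} \to \Reals$ with property $M(\mu^{(n)}, E^{(n)})$, that these (extended by $+\infty$ off $E^{(n)}$) be equicoercive, and that they $\Gamma$-converge to $I_{\mu^{(\infty)}}$; its conclusion is exactly the assertion that every convergent subsequence of global weak modes $(u^{(n)})_{n}$ has a global weak mode of $\mu^{(\infty)}$ as its limit. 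So the whole task reduces to checking that these hypotheses hold in the present Besov-1 setting.

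First I would fix the common ambient space. Following the reduction already carried out in \Cref{thm:Gamma_convergence_OM_Besov}, I discard the finitely many initial measures and, for the remaining $n$, re-choose $\eta^{(n)} > 0$ so that $t^{(n)} = t^{(\infty)}$, thereby regarding every $\mu^{(n)}$ and $\mu^{(\infty)}$ as a measure on the single space $X = X_{1}^{t^{(\infty)}} = \ell^{1}_{\delta^{(\infty)}}$. On this common space, \Cref{prop:OM_Besov_1} applies to each $\mu^{(n)} = B_{1}^{s^{(n)}}$ individually: it furnishes $E^{(n)} = X_{1}^{s^{(n)}}$, the identification $I_{\mu^{(n)}} = \norm{\quark}_{X_{1}^{s^{(n)}}}$, and property $M(\mu^{(n)}, E^{(n)})$. \Cref{thm:Gamma_convergence_OM_Besov} then supplies the remaining two hypotheses, equicoercivity of $(I_{\mu^{(n)}})_{n}$ and $\Gamma$-convergence to $I_{\mu^{(\infty)}}$, and — crucially in view of \Cref{remark:UniquenessOMfunctionals} — it does so for the \emph{same} canonical representatives $\norm{\quark}_{X_{1}^{s^{(n)}}}$ that \Cref{prop:OM_Besov_1} produces, so there is no clash of representatives between the two requirements.

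With all the hypotheses in place, \Cref{thm:convergence_of_modes} yields the conclusion directly. The only points needing a word of care are bookkeeping rather than analysis: that discarding the finitely many initial measures is harmless, since a convergent subsequence of $(u^{(n)})_{n \in \Naturals}$ shares its limit with its tail, which lies in the reindexed sequence; and that all notions of convergence and of ``global weak mode of $\mu^{(\infty)}$'' are taken in the common metric space $X$. I do not expect a genuine obstacle here, and in that sense the ``main obstacle'' has already been dispatched upstream: the substantive analytic content — the explicit \ac{OM} functionals, property $M$, and the equicoercivity and $\Gamma$-convergence estimates — is exactly what \Cref{prop:OM_Besov_1} and \Cref{thm:Gamma_convergence_OM_Besov} provide, and the corollary is purely a matter of assembling them.
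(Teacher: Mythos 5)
Your proposal is correct and follows exactly the paper's own route: the paper derives this corollary as a direct consequence of \Cref{thm:convergence_of_modes} combined with \Cref{thm:Gamma_convergence_OM_Besov}, with \Cref{prop:OM_Besov_1} supplying the \ac{OM} functionals and property $M$. Your additional remarks on the common ambient space, the choice of representatives, and the harmlessness of discarding finitely many initial measures are careful elaborations of bookkeeping the paper leaves implicit, but they do not constitute a different argument.
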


\section{Consequences for maximum a posteriori estimation in Bayesian inverse problems}
\label{sec:MAP_in_BIP}

The $\Gamma$-convergence theory of \ac{OM} functionals described in \Cref{sec:convergence_of_OM_functionals} has important consequences for the stability of \ac{MAP} estimators of \acl{BIP}s (\ac{BIP}s), in particular those \ac{BIP}s that use the probability measures considered above as prior distributions.

An inverse problem consists of the recovery of an unknown $u$ from related observational data $y$.
In the Bayesian approach to inverse problems \citep{KaipioSomersalo2005, Stuart2010}, these two objects are treated as coupled random variables $\rv{u}$ and $\rv{y}$ that take values in spaces $X$ and $Y$ respectively.
A priori knowledge about $\rv{u}$ is represented by a prior probability measure $\mu_{0} \in \prob{X}$ and one is given access to a realisation $y$ of $\rv{y}$.
The \emph{solution} of the \ac{BIP} is, by definition, the posterior probability measure $\mu^{y} \in \prob{X}$, i.e.\ the conditional distribution of $\rv{u}$ given that $\rv{y} = y$.
For the sake of space, we omit here all technical discussion of the existence and regularity of this conditional distribution and focus exclusively on the case that $\mu^{y}$ has a Radon--Nikodym derivative with respect to $\mu_{0}$ of the form
\[
	\mu^{y} (\rd u) \propto \exp ( - \Phi (u; y) ) \, \mu_{0} (\rd u)
\]
for some $\Phi \colon X \times Y \to \Reals$.
The function $\Phi$, often called the potential, encodes both the idealised relationship between the unknown and the data and statistical assumptions about any observational noise.
The textbook example is that $X$ is a separable Hilbert or Banach space of functions, $Y = \Reals^{J}$ for some $J \in \Naturals$, and that $\rv{y} = \mathcal{O}(\rv{u}) + \rv{\eta}$ for some deterministic observation map $\mathcal{O} \colon X \to Y$ and additive non-degenerate Gaussian noise $\rv{\eta} \sim \Normal (0, C_{\rv{\eta}})$ that is a priori independent of $\rv{u}$, in which case $\Phi$ is the familiar quadratic misfit
\[
	\Phi (u; y) = \frac{1}{2} \bignorm{ C_{\rv{\eta}}^{-1/2} ( y - \mathcal{O}(u) ) }^{2} .
\]

\change{One} convenient point summary of $\mu^{y}$ is a \ac{MAP} estimator, i.e.\ a point of maximum probability under $\mu^{y}$ in the sense of a maximiser of a small ball probability.
Under the conditions laid out in \Cref{sec:modes_and_OM_functionals}, these points (in the sense of global weak modes) are the minimisers of the \ac{OM} functional of $\mu^{y}$.
However, we note that there are many problems of interest for which a more generalised notion of \ac{MAP} estimator and a correspondingly generalised \ac{OM} functional are needed, particularly problems in which the prior may have bounded support or the potential may take the value $+\infty$ \citep{Clason2019GeneralizedMI}.

Our interest lies in assessing the stability of $\mu^{y}$ (more precisely, the stability of the \ac{MAP} estimators of $\mu^{y}$) in response to the following:
\begin{itemize}
	\item perturbations of the observed data $y$, to be reassured that the posterior is not unduly sensitive to observational errors;
	\item perturbations of the potential\footnote{\change{Of course, a perturbation of the data $y$ induces a perturbation of $\Phi(\quark; y)$.
	Sometimes it is easier to consider data perturbations and potential perturbations separately, and sometimes, as we do in \Cref{thm:Gamma-convergence_of_posteriors}, it is simpler to consider them both as perturbations of the potential.}} $\Phi$, \change{for example} to be reassured that the posterior is not unduly sensitive to numerical approximation of $\mathcal{O}$ by some $\mathcal{O}^{(n)}$ (e.g.\ using a finite element solver to solve a partial differential equation)\change{, or to examine the small-noise limit $C_{\rv{\eta}} \to 0$};
	\item perturbations of the prior $\mu_{0}$, to be reassured that the posterior is not unduly sensitive to prior assumptions, e.g.\ relating to the regularity of $u$.
\end{itemize}
We propose to address this question using the $\Gamma$-convergence results of the previous section.
The classes of measures for whose \ac{OM} functionals explicit $\Gamma$-limits were computed in \Cref{sec:convergence_of_OM_functionals} will serve here as Bayesian prior measures.

Our main result concerns the transfer of convergence properties of sequences of prior \ac{OM} functionals and sequences of potentials to the convergence of posterior \ac{OM} functionals.

\begin{theorem}[Transfer of property $M$, $\mathsf{\Gamma}$-convergence, equicoercivity, and \ac{MAP} estimators]
	\label{thm:Gamma-convergence_of_posteriors}
	Let $X$ be a metric space.
	For each $n \in \Naturals \cup \{ \infty \}$, let $\mu_{0}^{(n)} \in \prob{X}$ and let $\Phi^{(n)} \colon X \to \Reals$ be locally uniformly continuous.
	Suppose that, for each $n \in \Naturals \cup \{ \infty \}$, $Z^{(n)} \defeq \int_{X} e^{- \Phi^{(n)} (x)} \, \mu_{0}^{(n)} (\rd x) \in (0, \infty)$ and set
	\[
		\mu^{(n)} (\rd x) \defeq \frac{1}{Z^{(n)}} e^{- \Phi^{(n)} (x)} \, \mu_{0}^{(n)} (\rd x).
	\]
	Suppose that each $\mu_{0}^{(n)}$ has an \ac{OM} functional $I_{0}^{(n)} \colon E^{(n)} \to \Reals$. Then the following statements hold:
	\begin{enumerate}[label=(\alph*)]
		\item \label{item:posterior_OM_functional}
		Each $\mu^{(n)}$ has $I^{(n)} \defeq \Phi^{(n)} + I_{0}^{(n)} \colon E^{(n)} \to \Reals$ as an \ac{OM} functional.
		
		\item \label{item:extended_posterior_OM_functional}
		Suppose that property $M(\mu_{0}^{(n)}, E^{(n)})$ holds.
		Then property $M(\mu^{(n)}, E^{(n)})$ also holds, and the global weak modes of $\mu_{0}^{(n)}$ (resp.\ of $\mu^{(n)}$) are the global minimisers of the extended \ac{OM} functional $I_{0}^{(n)} \colon X \to \eReals$ (resp.\ of $I^{(n)} \colon X \to \eReals$).
		
		\item \label{item:Gamma-convergence_of_posteriors}
		Suppose that $I_{0}^{(n)} \xrightarrow{\Gamma} I_{0}^{(\infty)}$ and $\Phi^{(n)} \to \Phi^{(\infty)}$ continuously\footnote{See \Cref{defn:continuous_convergence} for the definition of continuous convergence.
		Note that \Cref{prop:DalMaso_Prop6.20} is agnostic as to which of the two summands converges continuously, and so \Cref{thm:Gamma-convergence_of_posteriors}\ref{item:Gamma-convergence_of_posteriors} also holds if $I_{0}^{(n)} \to I_{0}^{(\infty)}$ continuously and $\Phi^{(n)} \xrightarrow{\Gamma} \Phi^{(\infty)}$, which would be a weaker hypothesis on the potentials \change{but a stronger one on the prior \ac{OM} functionals}.
		However, since we have not studied the continuous convergence of prior \ac{OM} functionals, we do not stress this version of the theorem.} as $n \to \infty$.
		Then the \ac{OM} functionals $I^{(n)}$ satisfy
		\[
			\Gammalim_{n \to \infty} I^{(n)} = I^{(\infty)} .
		\]

		\item \label{item:equicoercivity_of_posteriors}
		Suppose that the sequence $(I_{0}^{(n)})_{n \in \Naturals}$ is equicoercive and the functions $\Phi^{(n)} \geq M$ are uniformly bounded from below by some constant $M \in \Reals$.
		Then the sequence $(I^{(n)})_{n \in \Naturals}$ is also equicoercive with respect to the same representatives of $I^{(n)}$ as for the $\Gamma$-convergence (cf.\ \Cref{remark:UniquenessOMfunctionals}).
		
		\item \label{item:convergence_of_gloabl_weak_modes}
		Suppose that the assumptions of parts \ref{item:extended_posterior_OM_functional}--\ref{item:equicoercivity_of_posteriors} all hold.
		Then the cluster points as $n \to \infty$ of the global weak modes of the posteriors $\mu^{(n)}$ are the global weak modes of the limiting posterior $\mu^{(\infty)}$.
	\end{enumerate}
\end{theorem}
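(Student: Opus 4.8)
The plan is to establish the five parts in order, since each relies on its predecessors. Parts (c)--(e) are essentially bookkeeping on top of the standard $\Gamma$-convergence machinery already collected in the appendix, so the technical heart of the argument lies in parts (a) and (b), where I would convert the local uniform continuity of the potentials into sharp control of ratios of small-ball masses.

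For part (a), fix $n$ and $x_{1}, x_{2} \in E^{(n)}$ and write
\[
	\frac{\mu^{(n)}(\cBall{x_1}{r})}{\mu^{(n)}(\cBall{x_2}{r})}
	=
	\frac{\int_{\cBall{x_1}{r}} e^{-\Phi^{(n)}} \, \rd\mu_0^{(n)}}{\int_{\cBall{x_2}{r}} e^{-\Phi^{(n)}} \, \rd\mu_0^{(n)}},
\]
the normalisation $Z^{(n)}$ having cancelled. Local uniform continuity of $\Phi^{(n)}$ supplies a modulus $\omega_{n}$ with $\absval{\Phi^{(n)}(x) - \Phi^{(n)}(x_{i})} \leq \omega_{n}(r)$ for all $x \in \cBall{x_i}{r}$ and $\omega_{n}(r) \to 0$; this sandwiches each integral between the bounds $e^{-\Phi^{(n)}(x_i) \pm \omega_n(r)} \mu_0^{(n)}(\cBall{x_i}{r})$. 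Dividing the two sandwiches and letting $r \searrow 0$, the error factors $e^{\pm 2 \omega_n(r)}$ tend to $1$ and the residual ratio $\mu_0^{(n)}(\cBall{x_1}{r}) / \mu_0^{(n)}(\cBall{x_2}{r})$ converges to $\exp(I_0^{(n)}(x_2) - I_0^{(n)}(x_1))$ by hypothesis; collecting exponents yields $\exp(I^{(n)}(x_2) - I^{(n)}(x_1))$ with $I^{(n)} = \Phi^{(n)} + I_0^{(n)}$, as claimed. For part (b), the identical squeeze with $x \in X \setminus E^{(n)}$ against a reference point $x^{\star} \in E^{(n)}$ leaves a finite constant factor $e^{-\Phi^{(n)}(x) + \Phi^{(n)}(x^\star)}$ multiplying $\mu_0^{(n)}(\cBall{x}{r}) / \mu_0^{(n)}(\cBall{x^\star}{r})$, which vanishes by property $M(\mu_0^{(n)}, E^{(n)})$; hence $M(\mu^{(n)}, E^{(n)})$ holds, and the identification of global weak modes with minimisers of the extended functionals then follows immediately from \Cref{prop:weak_mode_OM_minimiser}.

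Parts (c) and (d) are then structural. For (c), the decomposition $I^{(n)} = I_0^{(n)} + \Phi^{(n)}$ with $I_0^{(n)} \xrightarrow{\Gamma} I_0^{(\infty)}$ and $\Phi^{(n)} \to \Phi^{(\infty)}$ continuously makes the claim $\Gammalim_{n \to \infty} I^{(n)} = I^{(\infty)}$ precisely the stability of $\Gamma$-limits under addition of a continuously convergent sequence, i.e.\ \Cref{prop:DalMaso_Prop6.20}. For (d), the uniform bound $\Phi^{(n)} \geq M$ gives the sublevel inclusion $\{ I^{(n)} \leq t \} \subseteq \{ I_0^{(n)} \leq t - M \}$, so equicoercivity transfers verbatim from $(I_0^{(n)})$ to $(I^{(n)})$; here I would also note, per \Cref{remark:UniquenessOMfunctionals}, that the same representatives serve for both $\Gamma$-convergence and equicoercivity. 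Finally, part (e) combines (b)--(d) and invokes the fundamental theorem of $\Gamma$-convergence (\Cref{thm:fundamental_Gamma}, as already packaged in \Cref{thm:convergence_of_modes}): cluster points of minimisers of $I^{(n)}$ are minimisers of $I^{(\infty)}$, which by (b) are exactly the global weak modes of $\mu^{(\infty)}$.

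The main obstacle I anticipate is the uniformity in part (a): one must ensure the modulus of continuity controls the integrand over the \emph{entire} ball $\cBall{x_i}{r}$ rather than merely near $x_{i}$, and that every denominator is genuinely positive for small $r$ --- which is guaranteed since $x_{1}, x_{2}, x^{\star} \in E^{(n)} \subseteq \supp(\mu_0^{(n)})$. Once this squeeze is secured, the remaining parts are faithful applications of the cited $\Gamma$-convergence lemmas.
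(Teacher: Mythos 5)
Your proposal is correct and follows essentially the same route as the paper: the paper delegates parts \ref{item:posterior_OM_functional}--\ref{item:extended_posterior_OM_functional} to \Cref{lem:reweighted_OM_loc_unif_cts} and \Cref{lem:vanishing_small_ball_prob}\ref{item:vanishing_small_ball_prob_transfer}, whose proofs are exactly your sandwich argument with the local modulus of continuity, and then handles \ref{item:Gamma-convergence_of_posteriors}--\ref{item:convergence_of_gloabl_weak_modes} via \Cref{prop:DalMaso_Prop6.20}, the sublevel-set inclusion $(I^{(n)})^{-1}([-\infty,t]) \subseteq (I_{0}^{(n)})^{-1}([-\infty,t-M])$, and \Cref{thm:convergence_of_modes}, just as you do. The only difference is presentational: you inline the supporting lemmas rather than citing them.
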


\begin{proof}
	Parts \ref{item:posterior_OM_functional} and \ref{item:extended_posterior_OM_functional} follow from \Cref{lem:reweighted_OM_loc_unif_cts}, and part \ref{item:Gamma-convergence_of_posteriors} follows from \Cref{prop:DalMaso_Prop6.20} \citep[i.e.][Proposition~6.20]{DalMaso1993}.
	
	For part \ref{item:equicoercivity_of_posteriors}, let $(I_{0}^{(n)})_{n \in \Naturals}$ be equicoercive and $\Phi^{(n)} \geq M$ be uniformly bounded from below.
	Then, for any $t \in \Reals$, there exists a compact $K_{t} \subseteq X$ such that, for all $n \in \Naturals$, $(I_{0}^{(n)})^{-1} ([-\infty, t]) \subseteq K_{t}$.
	Since $I^{(n)}(x) = I_{0}^{(n)}(x) + \Phi^{(n)}(x) \leq t$ implies $I_{0}^{(n)}(x) \leq t-M$, it follows that, for any $t \in \Reals$ and $n \in \Naturals$, $(I^{(n)})^{-1} ([-\infty, t]) \subseteq K_{t-M}$.
		
	Finally, part \ref{item:convergence_of_gloabl_weak_modes} is just a restatement of \Cref{thm:convergence_of_modes}.
\end{proof}

\begin{remark}
	Loosely speaking, the hypothesis in \Cref{thm:Gamma-convergence_of_posteriors}\ref{item:equicoercivity_of_posteriors} that the potentials are uniformly bounded below corresponds to a likelihood model in which the observed data are (uniformly) finite dimensional.
	\ac{BIP}s with infinite-dimensional data are known to involve potentials that are unbounded below. Such potentials cannot be interpreted as (non-negative) misfit functionals, as discussed by e.g.\ \citet[Remark~3.8]{Stuart2010} and \citet{KasanickyMandel2017}.

	Note also that a standing assumption of \citet{DashtiLawStuartVoss2013} is that $\Phi$ is locally Lipschitz continuous, which is stronger than the local uniform continuity assumed in \Cref{thm:Gamma-convergence_of_posteriors}, and that boundedness of $\Phi$ from below is also assumed by \citet[Theorem~3.5]{DashtiLawStuartVoss2013}, just as in the hypothesis of \Cref{thm:Gamma-convergence_of_posteriors}\ref{item:equicoercivity_of_posteriors}.
\end{remark}

\change{
\begin{corollary}
	\label{cor:MAP_convergence_under_perturbations}
	Consider a \ac{BIP} with prior $\mu_{0} = \mu_{0}^{(\infty)}$, potential $\Phi = \Phi^{(\infty)}$, and observed data $y = y^{(\infty)}$, each of which may now be approximated.
	In addition to the assumptions of \Cref{thm:Gamma-convergence_of_posteriors}, assume for simplicity that the \ac{OM} functional of $\mu_{0}$ is lower semicontinuous, so that it equals its own $\Gamma$-limit (\Cref{thm:Gamma_limit_of_constant_sequence}).
	\begin{enumerate}[label=(\alph*)]
		\item If the data $y \qefed y^{(\infty)}$ are approximated by a sequence $(y^{(n)})_{n \in \Naturals}$ and the potential $\Phi$ and prior $\mu_{0}$ are held constant, then continuous convergence of $\Phi^{(n)} \defeq \Phi (\quark; y^{(n)})$ to $ \Phi^{(\infty)} \defeq \Phi(\quark; y^{(\infty)})$ ensures $\Gamma$-convergent and equicoercive sequences of posterior \ac{OM} functionals and convergent sequences of \ac{MAP} estimators (in the sense of global weak modes, and up to subsequences).

		\item Similarly, if the data and potential are held constant and the prior $\mu_{0} \qefed \mu_{0}^{(\infty)}$ is approximated by a sequence of priors $( \mu_{0}^{(n)} )_{n \in \Naturals}$, then $\Gamma$-convergence of prior \ac{OM} functionals, i.e.\ $I_{\mu_{0}^{(n)}} \xrightarrow{\Gamma} I_{\mu_{0}^{(\infty)}}$, yields convergent sequences of \ac{MAP} estimators.

		\item Finally, if the data and prior are held constant and the potential $\Phi \qefed \Phi^{(\infty)}$ is approximated by a sequence of potentials $( \Phi^{(n)} )_{n \in \Naturals}$, then continuous convergence $\Phi^{(n)} (\quark; y) \to \Phi^{(\infty)}(\quark; y)$ yields convergent sequences of \ac{MAP} estimators;
		in particular, this holds when the approximate misfit/potential $\Phi^{(n)}$ arises through projection, e.g.\ Galerkin discretisation (\Cref{lem:continuous_convergence_potentials}).
	\end{enumerate}
\end{corollary}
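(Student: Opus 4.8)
The plan is to recognise that all three parts of the corollary are instances of \Cref{thm:Gamma-convergence_of_posteriors}, obtained by freezing two of the three objects (prior, potential, data) while letting the third vary. Thus the entire proof reduces to checking, in each scenario, the two hypotheses of \Cref{thm:Gamma-convergence_of_posteriors}\ref{item:Gamma-convergence_of_posteriors} --- that one summand of the posterior \ac{OM} functional $\Gamma$-converges while the other converges continuously --- together with the equicoercivity and uniform-lower-bound hypotheses of part \ref{item:equicoercivity_of_posteriors}, and then invoking part \ref{item:convergence_of_gloabl_weak_modes} to obtain convergence of \ac{MAP} estimators up to subsequences. No new analysis is needed beyond these verifications.

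First I would dispose of the two cases in which the prior is held fixed, namely parts (a) and (c). Here $\mu_{0}^{(n)} = \mu_{0}$ for all $n$, so the prior \ac{OM} functionals form the constant sequence $I_{0}^{(n)} = I_{\mu_{0}}$. By the standing lower-semicontinuity assumption together with \Cref{thm:Gamma_limit_of_constant_sequence}, this constant sequence $\Gamma$-converges to $I_{\mu_{0}}$ itself, so the hypothesis $I_{0}^{(n)} \xrightarrow{\Gamma} I_{0}^{(\infty)}$ holds. The potentials $\Phi^{(n)}$ converge continuously to $\Phi^{(\infty)}$ by hypothesis --- and in the Galerkin setting of part (c) this continuous convergence is exactly what \Cref{lem:continuous_convergence_potentials} supplies --- so \Cref{thm:Gamma-convergence_of_posteriors}\ref{item:Gamma-convergence_of_posteriors} yields $\Gamma$-convergence of the posterior \ac{OM} functionals. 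Equicoercivity of $(I^{(n)})_{n}$ then follows from part \ref{item:equicoercivity_of_posteriors}, using that the constant prior sequence is equicoercive (equivalently, $I_{\mu_{0}}$ is coercive) and that the misfits $\Phi^{(n)}$ are uniformly bounded below; part \ref{item:convergence_of_gloabl_weak_modes} finally delivers the claimed convergence of \ac{MAP} estimators.

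Next I would treat part (b), in which the potential and data are held fixed and only the prior varies. Now $\Phi^{(n)} = \Phi$ is the constant sequence of a single locally uniformly continuous (hence continuous) function, and such a sequence converges continuously to $\Phi$: for any $x^{(n)} \to x$ one has $\Phi(x^{(n)}) \to \Phi(x)$ by continuity. The $\Gamma$-convergence $I_{0}^{(n)} \xrightarrow{\Gamma} I_{0}^{(\infty)}$ of the prior \ac{OM} functionals is precisely the imposed hypothesis, so the roles of the two summands in \Cref{thm:Gamma-convergence_of_posteriors}\ref{item:Gamma-convergence_of_posteriors} are simply interchanged relative to cases (a) and (c). Applying parts \ref{item:Gamma-convergence_of_posteriors}--\ref{item:convergence_of_gloabl_weak_modes} exactly as before then gives the result.

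The delicate points here are bookkeeping rather than true obstacles. One must track carefully which summand plays the $\Gamma$-convergent role and which the continuously convergent role, since \Cref{thm:Gamma-convergence_of_posteriors}\ref{item:Gamma-convergence_of_posteriors} (through \Cref{prop:DalMaso_Prop6.20}) is symmetric in the two summands even though the two modes of convergence are not interchangeable in general. The lower-semicontinuity assumption on $I_{\mu_{0}}$ is exactly what permits a fixed prior to be read as a trivially $\Gamma$-convergent constant sequence via \Cref{thm:Gamma_limit_of_constant_sequence}, and the uniform lower bound on the potentials --- automatic for genuine non-negative misfit functionals --- is what transfers equicoercivity from the prior to the posterior in part \ref{item:equicoercivity_of_posteriors}. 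With these observations in place, each part is a direct appeal to \Cref{thm:Gamma-convergence_of_posteriors}.
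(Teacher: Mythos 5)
Your proposal is correct and is essentially the paper's own argument: the paper states \Cref{cor:MAP_convergence_under_perturbations} without a separate proof precisely because each part is the direct application of \Cref{thm:Gamma-convergence_of_posteriors}\ref{item:Gamma-convergence_of_posteriors}--\ref{item:convergence_of_gloabl_weak_modes} that you spell out, reading the fixed prior as a constant sequence that $\Gamma$-converges to itself by lower semicontinuity (\Cref{thm:Gamma_limit_of_constant_sequence}) and the fixed potential as a constant, continuously convergent sequence, with equicoercivity and the uniform lower bound imported from the hypotheses of \Cref{thm:Gamma-convergence_of_posteriors}\ref{item:equicoercivity_of_posteriors}. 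One minor descriptive slip: in part (b) no roles are actually interchanged --- in all three cases the prior \ac{OM} functionals occupy the $\Gamma$-convergence slot and the potentials the continuous-convergence slot of \Cref{prop:DalMaso_Prop6.20} --- but this does not affect the validity of your verifications.
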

}

\change{
\begin{example}[Small-noise limits]
	Regrettably, the analysis of \ac{MAP} estimators of small-noise (infinite-precision) limits is not entirely trivial even under the $\Gamma$-convergence theory that we have outlined.
	Consider a \ac{BIP} on $X$ with prior $\mu_{0}$ and potential $\Phi$.
	Assume that $\mu_{0}$ has \ac{OM} functional $I_{0} \colon E \to \Reals$ that satisfies $M(\mu_{0}, E)$, leading to a lower semi-continuous and coercive extended \ac{OM} functional $I_{0} \colon X \to \eReals$.
	Assume also that $\Phi$ is locally uniformly continuous, is bounded below, and attains its lower bound --- without loss of generality, take this minimal value to be $0$.
	Now consider the posterior
	\[
		\mu^{(n)} (\rd x) \defeq \frac{1}{Z^{(n)}} e^{- n \Phi (x)} \, \mu_{0} (\rd x)
	\]
	in the small-noise limit $n \to \infty$.
	By \Cref{thm:Gamma-convergence_of_posteriors}, $\mu^{(n)}$ has \ac{OM} functional $I^{(n)} = n \Phi + I_{0}$.
	It is easy to see that, pointwise,
	\[
		\lim_{n \to \infty} I^{(n)} (x)
		=
		I^{(\infty)} (x)
		\defeq
		\begin{cases}
			I_{0} (x) , & \text{if $\Phi(x) = 0$,} \\
			+\infty , & \text{otherwise.}
		\end{cases}
	\]
	It is natural to hope that $\Gammalim_{n \to \infty} I^{(n)} = I^{(\infty)}$ as well, and hence that the \ac{MAP} estimators of $\mu^{(n)}$ converge, in the small-noise limit $n \to \infty$, to the constrained minimisers of the prior \ac{OM} functional $I_{0}$ among the global minima of $\Phi$.
	However, this $\Gamma$-convergence is not straightforward to establish.
	\begin{itemize}
		\item For the $\Gamma$-$\limsup$ inequality, choose any $x \in X$.
		Consider first the case that $\Phi(x) > 0$:
		for the recovery sequence $x_{n} \equiv x$,
		\[
			I^{(n)} (x_{n}) = n \Phi (x) + I_{0} (x) \xrightarrow[n \to \infty]{} + \infty  = I^{(\infty)} (x) .
		\]
		Similarly, in the case $\Phi (x) = 0$, we may use the same recovery sequence to obtain $I^{(n)} (x_{n}) = I^{(n)} (x) = I_{0} (x) = I^{(\infty)} (x)$.
		
		\item For the $\Gamma$-$\liminf$ inequality, choose any $x \in X$ and any sequence $x_{n} \to x$.
		Taking $\omega_{\Phi, x}$ to be a local modulus of continuity for $\Phi$ near $x$, we have
		\[
			\absval{ \Phi (x_{n}) - \Phi (x) } \leq \omega_{\Phi, x} ( \norm{ x_{n} - x } )
		\]
		and hence
		\begin{align*}
			\liminf_{n \to \infty} I^{(n)} (x_{n})
			& = \liminf_{n \to \infty} \bigl( n \Phi (x_{n}) + I_{0} (x_{n}) \bigr) \\
			& \geq \liminf_{n \to \infty} \bigl( n \Phi (x) - n \omega_{\Phi, x} ( \norm{ x_{n} - x } ) + I_{0} (x_{n}) \bigr) \\
			& \geq \lim_{n \to \infty} n \Phi (x) - \liminf_{n \to \infty} n \omega_{\Phi, x} ( \norm{ x_{n} - x } ) + I_{0} (x) ,
		\end{align*}
		where the last inequality uses the lower semicontinuity of $I_{0}$.
		At this point we encounter a problem.
		For $x$ such that $\Phi(x) > 0$, the right-hand side of the above display is indeed $+ \infty$, as required.
		However, for $x$ such that $\Phi (x) = 0$, the $\Gamma$-$\liminf$ inequality only holds if $\liminf_{n \to \infty} n \omega_{\Phi, x} ( \norm{ x_{n} - x } ) = 0$, and this holds only if $x_{n}$ converges sufficiently rapidly to $x$, which is not at all guaranteed.
	\end{itemize}
\end{example}
}

\change{We close this section by repeating the observation made at the end of \Cref{sec:modes_and_OM_functionals}, that the} necessity of the continuous convergence / $\Gamma$-convergence assumptions, as opposed to simple pointwise convergence of densities or \ac{OM} functionals, is \change{shown} by \Cref{example:MAP_counterexamples}\ref{item:pointwise_convergent_density} from the introduction, which can easily be interpreted as a pointwise but not continuously convergent sequence of likelihoods/potentials and a Gaussian prior.

\section{Closing remarks}
\label{sec:closing}

The purpose of this paper was to establish a convergence theory for modes of probability measures (in the \ac{BIP} setting, \ac{MAP} estimators of Bayesian posterior measures) in the sense of maximisers of small ball probabilities, by first characterising them as minimisers of \ac{OM} functionals and then using the well-established notion of $\Gamma$-convergence from the calculus of variations.
The correspondence between modes and \ac{OM} minimisers was established rigorously for global weak modes under the abstract $M$-property, and counterexamples were given to show that an extension to strong modes and relaxation of the $M$-property would be non-trivial if not impossible.
The general programme of studying $\Gamma$-limits of \ac{OM} functionals of measures was illustrated via two explicit example classes that are frequently used in the inverse problems literature, namely Gaussian measures and Besov $B_{p}^{s}$ measures with integrability parameter $p = 1$.

The Gaussian and Besov-1 measures treated in this paper are merely simple examples of a general class of measures, namely countable products of scaled copies of a measure on $\R$ (the normal and Laplace distributions respectively).
General Besov-$p$ measures and infinite-product Cauchy measures fall into this class.
Part~II of this paper \citep{AyanbayevKlebanovLieSullivan2021_II} treats this class in a high degree of generality, following the same programme of determining the \ac{OM} functional, verifying the $M$-property, and showing $\Gamma$-convergence and equicoercivity.
The advantage of having considered the Gaussian and Besov-1 measures separately in this paper is that the requisite calculations could be done in more-or-less closed form and with much less notational overhead than the general case.

This work has made extensive use of the hypothesis that some measure $\mu$ of interest actually possesses an \acl{OM} functional (and moreover one that satisfies property $M (\mu, E)$ for a ``good enough'' $E$), and that $\mu$ possesses a mode.
However, there are examples, even in finite dimension, of $\mu$ that have no strong or global weak modes, only \emph{generalised} modes in the sense of \citet{Clason2019GeneralizedMI}, which are associated with generalised \ac{OM} functionals.
A natural further generalisation of this article would be to study the $\Gamma$-convergence properties of such generalised \ac{OM} functionals, and hence the convergence of generalised strong modes.

\change{It would be of great value in applications not only to know that some sequence of approximations to an ideal limiting \ac{MAP} problem $\Gamma$-converges, but also to quantify how quickly those approximate \ac{MAP} estimators converge.
Unfortunately, this is not trivial, since the basic framework of $\Gamma$-convergence does not easily deliver convergence rates for minimisers, especially when the objective functions are non-smooth, as is the case for most of the \ac{OM} functionals in our setting.
Therefore, the interesting question of convergence rates for modes / \ac{MAP} estimators must be deferred to future work.}

\appendix

\section{\texorpdfstring{$\boldsymbol{\mathsf{\Gamma}}$-convergence}{Gamma-convergence}}
\label{sec:Gamma}

We collect here the basic definitions and results related to $\Gamma$-convergence as used in the main text.
Standard references on $\Gamma$-convergence include the books of \citet{Braides2002, Braides2006} and \citet{DalMaso1993}.

\begin{definition}
	\label{defn:Gamma_and_equicoercive}
	Let $X$ be a metric space and suppose that $F_{n}, F \colon X \to \eReals$.
	We say that $F_{n}$ \defterm{$\boldsymbol{\Gamma}$-converges} to $F$, written $\Gammalim_{n \to \infty} F_{n} = F$ or $F_{n} \xrightarrow[n \to \infty]{\Gamma} F$, if, for every $x \in X$,
	\begin{enumerate}[label=(\alph*)]
		\item ($\Gamma$-$\liminf$ inequality)
		for every sequence $(x_{n})_{n \in \Naturals}$ converging to $x$,
		\begin{equation*}
			F(x) \leq \liminf_{n \to \infty} F_{n} (x_{n}) ;
		\end{equation*}
		\item ($\Gamma$-$\limsup$ inequality)
		and there exists a ``recovery sequence'' $(x_{n})_{n \in \Naturals}$ converging to $x$ such that
		\begin{equation*}
			F(x) \geq \limsup_{n \to \infty} F_{n} (x_{n}) .
		\end{equation*}
	\end{enumerate}
    We say that $(F_{n})_{n\in\Naturals}$ is \defterm{equicoercive} if for all $t \in \Reals$, there exists a compact $K_{t} \subseteq X$ such that, for all $n \in \Naturals$, $F_{n}^{-1} ([-\infty, t]) \subseteq K_{t}$.
\end{definition}

In general, $\Gamma$-convergence and pointwise convergence are independent of one another, although the following inequality always holds:
\begin{equation*}
	\left(
	F_{n} \xrightarrow[n \to \infty]{\Gamma} F \text{ and }
		F_{n} \xrightarrow[n \to \infty]{~} G \text{ pointwise} \right)
	\implies
	F \leq G .
\end{equation*}
However, one can compare $\Gamma$-convergence with continuous convergence:

\begin{definition}
	\label{defn:continuous_convergence}
	Let $X$ be a metric space and suppose that $F_{n}, F \colon X \to \eReals$.
	We say that $F_{n}$ \defterm{converges continuously} to $F$ if, for every $x \in X$ and every neighbourhood $V$ of $F(x)$ in $\eReals$, there exists $N \in \Naturals$ and a neighbourhood $U$ of $x$ such that
	\[
		( n \geq N \text{ and } x' \in U ) \implies F_{n}(x') \in V .
	\]
\end{definition}

Continuous convergence implies both pointwise convergence and $\Gamma$-convergence and, in the case that $F$ is continuous, is implied by uniform convergence of $F_{n}$ to $F$ \citep[Chapters~4 and 5]{DalMaso1993}.

\begin{theorem}[Fundamental theorem of $\mathsf{\Gamma}$-convergence; {\citealp[Theorem 2.10]{Braides2006}}]
	\label{thm:fundamental_Gamma}
	Let $X$ be a metric space and suppose that $F_{n}, F \colon X \to \eReals$ are such that $\Gammalim_{n \to \infty} F_{n} = F$ and $(F_{n})_{n \in \Naturals}$ is equicoercive.
	Then $F$ has a minimum value and $\min_{X} F = \lim_{n \to \infty} \inf_{X} F_{n}$.
	Moreover, if $(x_{n})_{n \in \Naturals}$ is a precompact sequence such that $\lim_{n \to \infty} F_{n} (x_{n}) = \min_{X} F$, then every limit of a \change{convergent} subsequence of $(x_{n})_{n \in \Naturals}$ is a minimiser of $F$.
	Thus, if each $F_{n}$ has a minimiser $x_{n}$, then every convergent subsequence of $(x_{n})_{n\in\Naturals}$ has as its limit a minimiser of $F$.
\end{theorem}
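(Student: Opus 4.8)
The plan is to split the claimed identity $\min_{X} F = \lim_{n\to\infty} \inf_{X} F_{n}$ into a pair of inequalities for the infima, to extract a minimiser of $F$ along the way, and only then to read off the two statements about convergence of (near-)minimisers. The easy inequality uses no coercivity at all: for any $x\in X$, the $\Gamma$-$\limsup$ inequality furnishes a recovery sequence $x_{n}\to x$ with $\limsup_{n\to\infty} F_{n}(x_{n})\leq F(x)$, and since $\inf_{X} F_{n}\leq F_{n}(x_{n})$ this yields $\limsup_{n\to\infty}\inf_{X} F_{n}\leq F(x)$; taking the infimum over $x\in X$ gives $\limsup_{n\to\infty}\inf_{X} F_{n}\leq \inf_{X} F$.

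The hard part will be the reverse inequality $\inf_{X} F\leq\liminf_{n\to\infty}\inf_{X} F_{n}$ together with attainment of the minimum, and this is exactly where equicoercivity is indispensable. Writing $m\defeq\liminf_{n\to\infty}\inf_{X} F_{n}$, I may assume $m<+\infty$, since otherwise the easy inequality already forces $\inf_{X} F=+\infty$ and the identity and attainment are trivial. Passing to a subsequence along which $\inf_{X} F_{n}\to m$ and choosing almost-minimisers $x_{n}$ with $F_{n}(x_{n})\leq \inf_{X} F_{n} + 1/n$, I obtain $F_{n}(x_{n})\to m$, so that for all large $n$ one has $x_{n}\in F_{n}^{-1}([-\infty,m+1])$. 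By equicoercivity this sublevel set lies in a fixed compact $K_{m+1}\subseteq X$; since $X$ is metric, compactness is sequential, so a subsequence $x_{n_{k}}$ converges to some $x^\star\in K_{m+1}$. The $\Gamma$-$\liminf$ inequality then gives $F(x^\star)\leq\liminf_{k\to\infty} F_{n_{k}}(x_{n_{k}})=m$, whence $\inf_{X} F\leq F(x^\star)\leq m$. Combining with the easy inequality forces $\inf_{X} F=\lim_{n\to\infty}\inf_{X} F_{n}=m$ and, since $F(x^\star)\leq\inf_{X} F$, the point $x^\star$ is a minimiser; thus $\min_{X} F$ exists and equals $\lim_{n\to\infty}\inf_{X} F_{n}$.

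With the value identity in hand, the remaining statements follow from the $\Gamma$-$\liminf$ inequality alone. Given any precompact sequence $(x_{n})_{n\in\Naturals}$ with $F_{n}(x_{n})\to\min_{X} F$ and any convergent subsequence $x_{n_{k}}\to x^\star$, the $\Gamma$-$\liminf$ inequality yields $F(x^\star)\leq\liminf_{k\to\infty}F_{n_{k}}(x_{n_{k}})=\min_{X} F$, so $x^\star$ is a minimiser. Finally, if each $F_{n}$ attains its infimum at some $x_{n}$, then $F_{n}(x_{n})=\inf_{X} F_{n}\to\min_{X} F$ by the first part, so $(x_{n})_{n\in\Naturals}$ is such a near-minimising sequence and the same argument applies to each of its convergent subsequences. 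The single genuine obstacle is therefore the coercivity step: without the common compact sublevel set $K_{m+1}$, the almost-minimisers could fail to cluster and $F$ could both fail to attain its infimum and undershoot $\liminf_{n\to\infty}\inf_{X} F_{n}$, so equicoercivity is precisely what prevents mass from escaping to infinity.
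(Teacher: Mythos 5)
Your proof is correct, but note that the paper contains no proof of this statement to compare against: the theorem is quoted as background material in the appendix, with the proof delegated entirely to the citation \citet[Theorem~2.10]{Braides2006}. Your argument is essentially the standard one from that literature --- the recovery-sequence (``easy'') inequality $\limsup_{n \to \infty} \inf_{X} F_{n} \leq \inf_{X} F$, then equicoercivity plus the $\Gamma$-$\liminf$ inequality applied to a convergent subsequence of almost-minimisers to obtain the reverse inequality together with attainment, and finally the convergence of (near-)minimisers as a consequence of the $\Gamma$-$\liminf$ inequality alone. So you have supplied, correctly, the proof that the paper outsources; there is no genuinely different internal argument to contrast it with.

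Two small points of polish. First, you write $x_{n} \in F_{n}^{-1}([-\infty, m+1])$ with $m = \liminf_{n \to \infty} \inf_{X} F_{n}$, but equicoercivity does not prevent $\inf_{X} F_{n} = -\infty$, nor $m = -\infty$ (e.g.\ $X = [0,1]$, $F_{n}(x) = -1/x$), in which case both the almost-minimiser selection ``$F_{n}(x_{n}) \leq \inf_{X} F_{n} + 1/n$'' and the threshold $m+1$ are ill-formed. The fix is routine: choose $x_{n}$ with $F_{n}(x_{n}) \leq \max\{\inf_{X} F_{n}, -n\} + 1/n$, so that $F_{n}(x_{n}) \to m$, and trap the tail of the sequence in $F_{n}^{-1}([-\infty, t]) \subseteq K_{t}$ for any fixed finite $t > m$ (say $t = 0$ when $m = -\infty$); the conclusion $F(x^{\star}) \leq m$ still certifies a minimiser, since $F(x^{\star}) = -\infty$ in that case. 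Second, the $\Gamma$-$\liminf$ inequality in \Cref{defn:Gamma_and_equicoercive} is stated for full sequences converging to $x$, whereas you invoke it along the extracted subsequence $x_{n_{k}} \to x^{\star}$. This is legitimate but deserves its one-line justification: define $y_{n} = x_{n_{k}}$ when $n = n_{k}$ and $y_{n} = x^{\star}$ otherwise, so that $y_{n} \to x^{\star}$ and
\[
F(x^{\star}) \leq \liminf_{n \to \infty} F_{n}(y_{n}) \leq \liminf_{k \to \infty} F_{n_{k}}(x_{n_{k}}),
\]
the second inequality holding because a $\liminf$ along a subsequence dominates the $\liminf$ of the full sequence. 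Neither point affects the structure of your argument.
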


\begin{proposition}[{\citealp[Proposition~6.20]{DalMaso1993}}]%
	\label{prop:DalMaso_Prop6.20}%
	Let $X$ be a metric space and suppose that $F_{n}, F \colon X \to \eReals$ and $G_{n}, G \colon X \to \Reals$ are such that $F_{n} \xrightarrow{\Gamma} F$ on $X$ and $G_{n} \to G$ continuously on $X$ as $n \to \infty$.
	Then
	\[
		F_{n} + G_{n} \xrightarrow[n \to \infty]{\Gamma} F + G
		\quad
		\text{on $X$.}
	\]
\end{proposition}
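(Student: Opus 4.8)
The plan is to verify the two halves of \Cref{defn:Gamma_and_equicoercive} directly for the sum $F_{n} + G_{n}$, leaning on a single key fact: continuous convergence admits a sequential characterisation. Namely, in a metric space, $G_{n} \to G$ continuously if and only if $G_{n}(x_{n}) \to G(x)$ whenever $x_{n} \to x$. The forward direction follows immediately from \Cref{defn:continuous_convergence} by taking, for a given neighbourhood $V$ of $G(x)$, the threshold $N$ and neighbourhood $U$ that it supplies and noting that $x_{n} \in U$ for all large $n$; the reverse direction is the standard first-countability argument (if continuous convergence failed, one could extract indices $n_{k} \to \infty$ and points tending to $x$ whose images avoid a fixed neighbourhood of $G(x)$, contradicting the sequential hypothesis). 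Since $G$ is real-valued, the limit $G_{n}(x_{n}) \to G(x)$ is always \emph{finite}, which is precisely what keeps the extended-real arithmetic below free of $\infty - \infty$ indeterminacies.

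For the $\Gamma$-$\liminf$ inequality, fix $x \in X$ and an arbitrary sequence $x_{n} \to x$. The sequential characterisation gives $G_{n}(x_{n}) \to G(x)$ with $G(x) \in \Reals$, and adding a convergent sequence with finite limit merely shifts the lower limit, so that
\[
	\liminf_{n \to \infty} \bigl( F_{n}(x_{n}) + G_{n}(x_{n}) \bigr)
	=
	\liminf_{n \to \infty} F_{n}(x_{n}) + G(x)
	\geq
	F(x) + G(x),
\]
where the final inequality is the $\Gamma$-$\liminf$ inequality for $F_{n} \xrightarrow{\Gamma} F$. The middle identity remains valid in $\eReals$ even when $\liminf_{n} F_{n}(x_{n}) = \pm\infty$ or $F(x) = +\infty$, again because $G(x)$ is finite.

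For the $\Gamma$-$\limsup$ inequality, fix $x \in X$ and let $(x_{n})_{n \in \Naturals}$ be a recovery sequence for $F_{n} \xrightarrow{\Gamma} F$ at $x$, so $x_{n} \to x$ and $\limsup_{n} F_{n}(x_{n}) \leq F(x)$. I would use the \emph{same} sequence as the recovery sequence for $F_{n} + G_{n}$: continuous convergence gives $G_{n}(x_{n}) \to G(x) \in \Reals$, whence
\[
	\limsup_{n \to \infty} \bigl( F_{n}(x_{n}) + G_{n}(x_{n}) \bigr)
	=
	\limsup_{n \to \infty} F_{n}(x_{n}) + G(x)
	\leq
	F(x) + G(x).
\]
This establishes both inequalities and hence $\Gammalim_{n \to \infty} (F_{n} + G_{n}) = F + G$.

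The argument is structurally short, and the one genuinely non-routine input is the sequential characterisation of continuous convergence; I expect its reverse implication --- recovering the neighbourhood-based \Cref{defn:continuous_convergence} from the sequential statement --- to be the main obstacle, the subtle point being the diagonal extraction that reconciles the single index $n$ in the definition with the double indexing of the contradiction argument. This is, however, entirely routine given that $X$ is metric and hence first countable. Everything else reduces to the elementary observation that a convergent perturbation with \emph{finite} limit commutes with both $\liminf$ and $\limsup$, which is exactly what lets the $\Gamma$-$\liminf$ and $\Gamma$-$\limsup$ structure of $(F_{n})$ pass intact to $(F_{n} + G_{n})$.
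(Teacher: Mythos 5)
Your proof is correct, but note that the paper does not actually prove this proposition: it is quoted verbatim from Dal Maso's book (Proposition~6.20 there) and used as a black box, so your argument supplies a proof where the paper relies on a citation. Your route is the standard one and is sound: because $G_{n}$ and $G$ are real-valued, the continuously convergent perturbation shifts both $\liminf$ and $\limsup$ by the \emph{finite} quantity $G(x)$, so the $\Gamma$-$\liminf$ inequality survives along arbitrary sequences $x_{n} \to x$, and any recovery sequence for $F_{n}$ at $x$ also serves as a recovery sequence for $F_{n} + G_{n}$; the extended-real arithmetic is unproblematic precisely because the shift is finite. One remark on your own assessment: the only input you actually use is the \emph{forward} implication of your sequential characterisation --- continuous convergence implies $G_{n}(x_{n}) \to G(x)$ whenever $x_{n} \to x$ --- which follows in two lines from \Cref{defn:continuous_convergence} by intersecting the neighbourhood $U$ with the tail of the sequence. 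The reverse implication, which you single out as the main obstacle and which requires the first-countability/diagonal extraction argument, is never invoked in either half of the proof, so your argument is in fact shorter and more self-contained than your closing paragraph suggests.
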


\begin{theorem}[{\citealp[Proposition~2.5]{Braides2006}}]%
	\label{thm:Gamma_limit_of_constant_sequence}%
	The $\Gamma$-limit of a constant sequence $(F)_{n \in \Naturals}$ is the lower semicontinuous envelope $F^{\text{lsc}}$ of $F$, i.e.\ the greatest lower semicontinuous function bounded above by $F$:
	\begin{equation*}%
		F^{\text{lsc}} (x) \defeq \liminf_{x' \to x} F(x') .
	\end{equation*}
	In particular, $F = \Gammalim_{n \to \infty} F$ if and only if $F$ is lower semicontinuous.
\end{theorem}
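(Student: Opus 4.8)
The plan is to verify directly the two defining inequalities of $\Gamma$-convergence (\Cref{defn:Gamma_and_equicoercive}) for the candidate limit $F^{\text{lsc}}$, relying throughout on the characterisation
\[
	F^{\text{lsc}}(x) = \sup_{\delta > 0} \inf_{x' \in \cBall{x}{\delta}} F(x') = \lim_{\delta \searrow 0} \inf_{x' \in \cBall{x}{\delta}} F(x'),
\]
where the second equality holds because $\delta \mapsto \inf_{x' \in \cBall{x}{\delta}} F(x')$ is non-increasing in $\delta$ (smaller balls give larger infima), so that the supremum is attained as $\delta \searrow 0$. Since the constant sequence has $F_{n} \equiv F$, the terms $F_{n}(x_{n})$ appearing in \Cref{defn:Gamma_and_equicoercive} are simply $F(x_{n})$.

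First I would establish the $\Gamma$-$\liminf$ inequality. Let $x \in X$ and let $x_{n} \to x$ be arbitrary. For each fixed $\delta > 0$ we have $x_{n} \in \cBall{x}{\delta}$ for all sufficiently large $n$, so $F(x_{n}) \geq \inf_{x' \in \cBall{x}{\delta}} F(x')$, and hence $\liminf_{n \to \infty} F(x_{n}) \geq \inf_{x' \in \cBall{x}{\delta}} F(x')$. Taking the supremum over $\delta > 0$ yields $\liminf_{n \to \infty} F(x_{n}) \geq F^{\text{lsc}}(x)$, which is exactly the $\Gamma$-$\liminf$ inequality for $F^{\text{lsc}}$.

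The heart of the argument, and the one place demanding care, is the $\Gamma$-$\limsup$ inequality, i.e.\ the construction of a recovery sequence $x_{n} \to x$ with $\limsup_{n \to \infty} F(x_{n}) \leq F^{\text{lsc}}(x)$. I would split into cases according to the extended-real value of $F^{\text{lsc}}(x)$. If $F^{\text{lsc}}(x) = +\infty$ the inequality is vacuous and the constant sequence $x_{n} \equiv x$ suffices. If $F^{\text{lsc}}(x)$ is finite, then since $\inf_{x' \in \cBall{x}{1/n}} F(x') \leq F^{\text{lsc}}(x)$ I would pick, for each $n$, a point $x_{n} \in \cBall{x}{1/n}$ with $F(x_{n}) \leq \inf_{x' \in \cBall{x}{1/n}} F(x') + \tfrac{1}{n}$; then $x_{n} \to x$ and, because $\inf_{x' \in \cBall{x}{1/n}} F(x') \nearrow F^{\text{lsc}}(x)$, we obtain $\limsup_{n \to \infty} F(x_{n}) \leq F^{\text{lsc}}(x)$. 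If $F^{\text{lsc}}(x) = -\infty$, the monotonicity forces $\inf_{x' \in \cBall{x}{1/n}} F(x') = -\infty$ for every $n$, so I would instead select $x_{n} \in \cBall{x}{1/n}$ with $F(x_{n}) \leq -n$, giving $x_{n} \to x$ and $F(x_{n}) \to -\infty = F^{\text{lsc}}(x)$. In every case a recovery sequence exists, completing this inequality.

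Together the two inequalities establish $\Gammalim_{n \to \infty} F = F^{\text{lsc}}$. For the final assertion, I would invoke the standard fact that $F = F^{\text{lsc}}$ if and only if $F$ is lower semicontinuous: by construction $F^{\text{lsc}}$ is the greatest lower semicontinuous function bounded above by $F$, so it coincides with $F$ exactly when $F$ is itself lower semicontinuous. Combined with the identity just proved, this yields $F = \Gammalim_{n \to \infty} F$ if and only if $F$ is lower semicontinuous. The only genuine obstacle is the bookkeeping in the recovery-sequence construction across the three extended-real cases; everything else is an immediate unwinding of the definitions.
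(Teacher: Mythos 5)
Your proof is correct, but there is nothing in the paper to compare it against: the paper states this result as a quoted fact from \citet[Proposition~2.5]{Braides2006} in its appendix of standard $\Gamma$-convergence material and gives no proof. Your contribution is therefore a self-contained verification from \Cref{defn:Gamma_and_equicoercive}, which is the natural route: the $\Gamma$-$\liminf$ inequality via monotonicity of $\delta \mapsto \inf_{x' \in \cBall{x}{\delta}} F(x')$, and a diagonal recovery sequence for the $\Gamma$-$\limsup$ inequality, split over the three extended-real cases; what this buys is independence from the textbook reference and explicit visibility of where metrisability of $X$ enters (through the ball-based characterisation of the envelope). Three small remarks. First, your reading of $\liminf_{x' \to x} F(x')$ as $\sup_{\delta > 0} \inf_{x' \in \cBall{x}{\delta}} F(x')$, with the centre $x$ included in the ball, is the correct one: under a punctured-ball convention the formula need not be bounded above by $F$ and so could not be the greatest lower semicontinuous function below $F$, and it would also not equal the $\Gamma$-limit (the constant recovery sequence $x_{n} \equiv x$ already forces $\Gammalim_{n \to \infty} F \leq F$ pointwise). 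Second, in your finite case there is a harmless edge case left unaddressed: $\inf_{x' \in \cBall{x}{1/n}} F(x')$ can equal $-\infty$ for finitely many initial indices $n$ (monotonicity together with finiteness of the supremum rules out infinitely many), and for those $n$ the selection rule $F(x_{n}) \leq \inf_{x' \in \cBall{x}{1/n}} F(x') + \tfrac{1}{n}$ is not meaningful unless $F$ attains $-\infty$; since finitely many terms never affect a $\limsup$, one simply takes $x_{n} \defeq x$ for those indices. Third, your closing step quietly uses that $F^{\text{lsc}}$ is itself lower semicontinuous (otherwise ``greatest lower semicontinuous minorant'' is not available as a characterisation); this is standard, but it can be bypassed entirely by observing that $F^{\text{lsc}} \leq F$ always holds (take $x' = x$ in the infimum), while lower semicontinuity of $F$ is precisely the reverse inequality $F \leq F^{\text{lsc}}$, so $F = F^{\text{lsc}}$ if and only if $F$ is lower semicontinuous, and the ``in particular'' claim then follows from the identity $\Gammalim_{n \to \infty} F = F^{\text{lsc}}$ that you established.
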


\section{Technical supporting results}
\label{sec:technical}

\subsection{Supporting results for \Cref{sec:notation}}

\begin{lemma}[The $M$-property]
	\label{lem:vanishing_small_ball_prob}
	Let $X$ be a metric space and let $\mu_{0} \in \prob{X}$.
	Suppose that $\mu_{0}$ has an \ac{OM} functional $I \colon E \to \Reals$ on a nonempty subset $E\subseteq \supp(\mu)$.
	\begin{enumerate}[label=(\alph*)]
		\item
		\label{item:vanishing_small_ball_prob_xstar_or_xstarstar}
		If $x^{\star} \in E$ satisfies \eqref{eq:liminf_small_ball_prob}, then any $x^{\star\star} \in E\setminus\{x^{\star}\}$ satisfies \eqref{eq:liminf_small_ball_prob}.
		In particular, property $M (\mu_{0},E)$ does not depend on the choice of $x^{\star}$ in \eqref{eq:liminf_small_ball_prob}.
		\item
		\label{item:vanishing_small_ball_prob_transfer}
		Let $\Phi \colon X \to \Reals$ be measurable, such that $\Phi$ is bounded on bounded subsets of $X$.\footnote{%
		The assumption that $\Phi$ is bounded on bounded subsets of $X$ is not restrictive.
		If $\Phi \colon X\to\Reals$ is continuous, then the boundedness assumption holds whenever the bounded subset is contained in a sufficiently small ball.
		Mild continuity assumptions on forward models and log-likelihoods are commonplace in the study of \ac{BIP}s \citep[e.g.][Assumption~2.6(i--ii)]{Stuart2010}.%
		}
		Suppose that $Z \defeq \int_{X} e^{-\Phi(x)} \, \mu_{0} (\rd x) \in (0, \infty)$ and define $\mu \in \prob{X}$ by $\mu (\rd x) \defeq Z^{-1} e^{-\Phi(x)} \, \mu_{0} (\rd x)$.
		If property $M (\mu_{0}, E)$ holds, then so too does $M (\mu, E)$.
		\item
		\label{item:vanishing_small_ball_prob_not_weak_mode}
		If property $M (\mu_{0}, E)$ holds, then no point of $X \setminus E$ can be a global weak mode for $\mu_{0}$, and hence cannot be a strong mode for $\mu_{0}$.
	\end{enumerate}
\end{lemma}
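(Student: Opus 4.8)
The plan is to handle the three parts in order, fixing once and for all a reference point $x^{\star} \in E$ witnessing property $M(\mu_{0},E)$; all balls centred in $E \subseteq \supp(\mu_{0})$ then have strictly positive mass, so every ratio below is well defined. Parts (a) and (c) are short consequences of the defining limits, whereas part (b) carries the analytic content.

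For part~(a), I would argue by a single factorisation. Given an arbitrary $x \in X \setminus E$ and a second reference point $x^{\star\star} \in E \setminus \{x^{\star}\}$, write
\[
	\frac{\mu_{0}(\cBall{x}{r})}{\mu_{0}(\cBall{x^{\star\star}}{r})}
	=
	\frac{\mu_{0}(\cBall{x}{r})}{\mu_{0}(\cBall{x^{\star}}{r})}
	\cdot
	\frac{\mu_{0}(\cBall{x^{\star}}{r})}{\mu_{0}(\cBall{x^{\star\star}}{r})} .
\]
As $r \searrow 0$ the first factor tends to $0$ by the hypothesis that $x^{\star}$ satisfies \eqref{eq:liminf_small_ball_prob}, while the second tends to the finite constant $\exp(I(x^{\star\star}) - I(x^{\star}))$ by the defining property \eqref{eq:Onsager--Machlup} of the \ac{OM} functional; hence the product tends to $0$. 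This is exactly \eqref{eq:liminf_small_ball_prob} with $x^{\star\star}$ in place of $x^{\star}$, which gives the claimed independence of the reference point.

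For part~(b), the strategy is to sandwich the $\mu$-mass of a small ball between scalar multiples of its $\mu_{0}$-mass. Fix any $x \in X \setminus E$ and choose once some $r_{0} > 0$; since $\cBall{x}{r_{0}}$ and $\cBall{x^{\star}}{r_{0}}$ are bounded, the hypothesis that $\Phi$ is bounded on bounded sets yields finite constants $M_{x}, M_{x^{\star}}$ with $\absval{\Phi} \leq M_{x}$ on $\cBall{x}{r_{0}}$ and $\absval{\Phi} \leq M_{x^{\star}}$ on $\cBall{x^{\star}}{r_{0}}$. Bounding $e^{-\Phi}$ above on $\cBall{x}{r}$ and below on $\cBall{x^{\star}}{r}$ gives, for all $0 < r \leq r_{0}$,
\[
	\frac{\mu(\cBall{x}{r})}{\mu(\cBall{x^{\star}}{r})}
	\leq
	e^{M_{x} + M_{x^{\star}}} \,
	\frac{\mu_{0}(\cBall{x}{r})}{\mu_{0}(\cBall{x^{\star}}{r})} ,
\]
where I have also used that $\mu$ and $\mu_{0}$ are mutually absolutely continuous with strictly positive density, so $\supp(\mu) = \supp(\mu_{0})$ and balls about $x^{\star}$ keep positive mass. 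The prefactor is a fixed finite constant and the $\mu_{0}$-ratio tends to $0$ by $M(\mu_{0},E)$, so the left-hand side tends to $0$, establishing $M(\mu,E)$ with the same reference point. The main obstacle lies precisely here: because $\Phi$ is only assumed measurable, the oscillation of $\Phi$ over $\cBall{x}{r}$ need not vanish as $r \searrow 0$, so one cannot hope to recover the pointwise density value in the limit. The resolution is that one does not need to — uniform boundedness of $\Phi$ on the single fixed ball $\cBall{\cdot}{r_{0}}$ already absorbs the whole density into a constant prefactor, which is all that the argument requires.

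For part~(c), I would test the global weak mode condition \eqref{eq:global_weak_mode} at an arbitrary $x \in X \setminus E$ against the competitor $u' = x^{\star}$. Property $M(\mu_{0},E)$ gives $\lim_{r \searrow 0} \mu_{0}(\cBall{x}{r}) / \mu_{0}(\cBall{x^{\star}}{r}) = 0$, so the reciprocal ratio satisfies $\limsup_{r \searrow 0} \mu_{0}(\cBall{x^{\star}}{r}) / \mu_{0}(\cBall{x}{r}) = +\infty > 1$ (the statement being trivial when $x \notin \supp(\mu_{0})$, where small balls about $x$ carry zero mass). Thus $x$ violates \eqref{eq:global_weak_mode} and is not a global weak mode. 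Finally, any strong mode must lie in $\supp(\mu_{0})$ — otherwise the defining limit in \eqref{eq:strong_mode} would equal $0$ rather than $1$ — and is then a global weak mode by \Cref{lem:strong_mode_implies_global_weak_mode}; contrapositively, $x$ cannot be a strong mode either, completing the proof.
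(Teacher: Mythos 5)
Your proposal is correct and follows essentially the same route as the paper's own proof: part~(a) by factorising the ratio and using the defining limit \eqref{eq:Onsager--Machlup}, part~(b) by sandwiching the density $e^{-\Phi}$ between constants on a fixed small ball so that the $\mu$-ratio is dominated by a constant times the $\mu_{0}$-ratio, and part~(c) by testing \eqref{eq:global_weak_mode} against $u' = x^{\star}$, taking reciprocals, and invoking \Cref{lem:strong_mode_implies_global_weak_mode}. If anything, your treatment of strong modes in part~(c) is slightly more careful than the paper's, since you explicitly note that a strong mode must lie in $\supp(\mu_{0})$ before applying that lemma.
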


\begin{proof}
	Suppose that $x^{\star}\in E$ satisfies \eqref{eq:liminf_small_ball_prob}.
	Let $x^{\star\star} \in E$ and $x \in X \setminus E$.
	If $x\notin\supp(\mu_{0})$, then for sufficiently small $r$, $\mu_0( \cBall{x}{r})=0$, so it suffices to prove the claim for $x \in \supp(\mu_{0})\setminus E$.
	Since $\mu_{0}$ has an \ac{OM} functional $I \colon E\to\Reals$,
	\begin{align*}
		\lim_{r \searrow 0} \frac{ \mu_{0} ( \cBall{x}{r} ) }{ \mu_{0} ( \cBall{x^{\star\star}}{r} ) }
		& = \lim_{r \searrow 0} \frac{ \mu_{0} ( \cBall{x^{\star}}{r} ) }{ \mu_{0} ( \cBall{x^{\star\star}}{r} ) } \frac{ \mu_{0} ( \cBall{x}{r} ) }{ \mu_{0} ( \cBall{x^{\star}}{r} ) } \\
		& = \lim_{r \searrow 0} \frac{ \mu_{0} ( \cBall{x^{\star}}{r} ) }{ \mu_{0} ( \cBall{x^{\star\star}}{r} ) } \lim_{r \searrow 0} \frac{ \mu_{0} ( \cBall{x}{r} ) }{ \mu_{0} ( \cBall{x^{\star}}{r} ) } \\
		& = \exp ( I(x^{\star\star}) - I(x^{\star}) ) \lim_{r \searrow 0} \frac{ \mu_{0} ( \cBall{x}{r} ) }{ \mu_{0} ( \cBall{x^{\star}}{r} ) } \\
		& = 0 ,
	\end{align*}
	where we used \eqref{eq:Onsager--Machlup} and \eqref{eq:liminf_small_ball_prob} in the penultimate and last equation.
	This proves \ref{item:vanishing_small_ball_prob_xstar_or_xstarstar}.

	For \ref{item:vanishing_small_ball_prob_transfer}, suppose that property $M (\mu_{0}, E)$ holds \change{with $x^{\star} \in E$ satisfying
	\[
	x \in X \setminus E \implies \lim_{r \searrow 0} \frac{ \mu ( \cBall{x}{r} ) }{ \mu ( \cBall{x^{\star}}{r} ) } = 0.
	\]
	}
	Observe that, for any $x \in X$,
	\begin{align*}
		\frac{ \mu ( \cBall{x}{r} ) }{ \mu ( \cBall{x^{\star}}{r} ) }
		= \frac{ \int_{\cBall{x}{r}} \exp ( - \Phi(y) ) \, \mu_{0} ( \rd y ) }{ \int_{\cBall{x^{\star}}{r}} \exp ( - \Phi(y) ) \, \mu_{0} ( \rd y ) }
		\leq \exp \left( - \inf_{\cBall{x}{r}} \Phi + \sup_{\cBall{x^{\star}}{r}} \Phi \right) \frac{ \mu_{0} ( \cBall{x}{r} ) }{ \mu_{0} ( \cBall{x^{\star}}{r} ) } .
	\end{align*}
	The exponential on the right-hand side is finite, by the assumption that $\Phi$ is bounded on bounded subsets of $X$.
	If $x \in X \setminus E$, then taking the limit as $r \searrow 0$ yields property $M (\mu, E)$, as claimed.

	Suppose that $x \in X \setminus E$ is a global weak mode in the sense of \cref{defn:global_weak_mode}.
	Then $x \in \supp (\mu)$ and
	\begin{equation*}
	 1\geq \limsup_{r \searrow 0} \frac{ \mu_{0} ( \cBall{x^{\star}}{r} ) }{ \mu_{0} ( \cBall{x}{r} ) }=\left( \liminf_{r \searrow 0} \frac{ \mu_{0} ( \cBall{x}{r} ) }{ \mu_{0} ( \cBall{x^{\star}}{r} ) }\right)^{-1}.
	\end{equation*}
	Above, we used that $x^{\star}\in E \subseteq \supp (\mu)$ to ensure that for every $r>0$, $\tfrac{ \mu_{0} ( \cBall{x^{\star}}{r} ) }{ \mu_{0} ( \cBall{x}{r} ) }>0$.
	The inequality above implies that
    \begin{equation*}
      \liminf_{r \searrow 0} \frac{ \mu_{0} ( \cBall{x}{r} ) }{ \mu_{0} ( \cBall{x^{\star}}{r} ) }\geq 1
    \end{equation*}
    and hence $x^{\star}$ does not satisfy \eqref{eq:liminf_small_ball_prob}.
    Finally, if $x$ is not a global weak mode, then by \cref{lem:strong_mode_implies_global_weak_mode}, it cannot be a strong mode.
    This proves \ref{item:vanishing_small_ball_prob_not_weak_mode}.
\end{proof}

\begin{figure}
	\centering
	\includegraphics[width=0.9\textwidth]{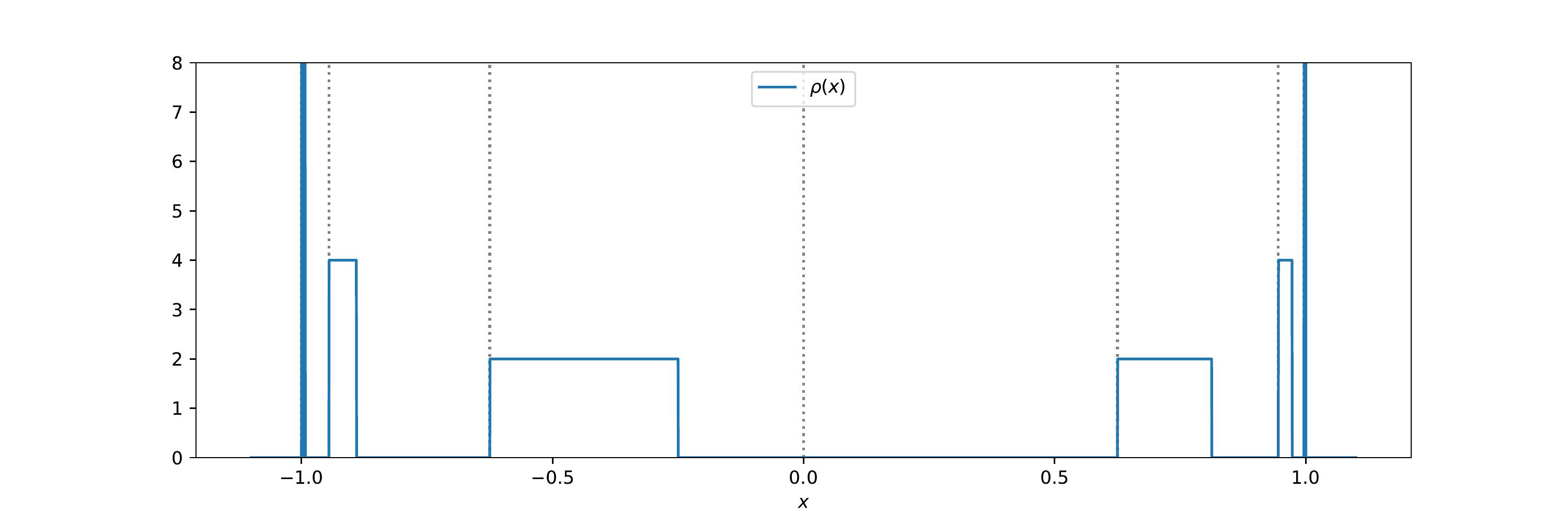}
	\caption{Probability density $\rho$ (unnormalized) in \Cref{example:Minimiser_of_OM_not_weak_mode}.
		$u=1$ is a minimiser of the \ac{OM} functional $I_{\mu,E}$ on $E = \{1\}$, but not a global weak mode, even though the ``$\liminf$-only'' version of property $M(\mu,E)$ is satisfied.
		The vertical lines are located at $-1+\alpha_{n}$ and $1-\alpha_{n}$, respectively.
	}
	\label{fig:Minimiser_of_OM_not_weak_mode}
\end{figure}

\begin{example}[Importance of limit in the $M$-property]
	\label{example:Minimiser_of_OM_not_weak_mode}
	For $n\in\Naturals$, let
	\begin{align*}
		a_{n} & \defeq 2^{-\frac{(n-1)(n+2)}{2}},
		&
		b_{n} & \defeq \frac{a_{n}}{2},
		&
		\alpha_{n} & \defeq 2^{-n} (a_{n} - a_{n+1}),
		&
		\beta_{n} & \defeq \frac{\alpha_{n}}{2},
		&
		\varepsilon_{n} & \defeq  2\alpha_{n},
		&
		\delta_{n} & \defeq \alpha_{n}.
	\end{align*}
	Let $\mu \in \prob{\R}$ be given by its probability density
	\[
		\rho
		\propto
		\sum_{n \in \Naturals} 2^{n} \big( \one_{[-1+\alpha_{n},-1+2\alpha_{n}]} + \one_{[1-2\beta_{n},1-\beta_{n}]}\big),
	\]
	where $\one_{A}$ denotes the indicator function of $A\subseteq \Reals$, as visualized in \Cref{fig:Minimiser_of_OM_not_weak_mode}.
	The definitions of $a_{n}$ and $\alpha_{n}$ imply that $(a_{n})_{n\in\Naturals}$ is strictly decreasing to $0$ and, for every $n\in\Naturals$,
	\begin{equation*}
		\alpha_{n}= 2^{-\frac{n^{2}+3n-2}{2}}-2^{-\frac{n^{2}+5n}{2}}>0,\quad  \frac{\alpha_{n}}{\alpha_{n+1}}>\frac{ 2^{-\frac{n^{2}+3n-2}{2}}-2^{-\frac{n^{2}+5n}{2}}}{2^{-\frac{n^2+5n+2}{2}}}=2^{n+2}-2^{-1}>2,
	\end{equation*}
	so $(\alpha_{n})_{n\in\Naturals}$ is strictly positive and strictly decreasing to $0$.
	Hence, the intervals $([-1+\alpha_{n},-1+2\alpha_{n}])_{n\in\Naturals}$ are disjoint, and the intervals $([1-2\beta_{n},1-\beta_{n}])_{n\in\Naturals}$ are also disjoint.
	The facts that $(\alpha_{n})_{n\in\Naturals}$ are strictly positive and decreasing imply that if $m\geq n$, then $(-1+\alpha_{m},-1+2\alpha_{m})\subseteq (-1-2\alpha_{n},-1+2\alpha_{n})=\cBall{-1}{\varepsilon_{n}}$.
	The inequality $\tfrac{\alpha_{n-1}}{\alpha_{n}}>2$ implies that $-1+2\alpha_{n}<-1+\alpha_{m}$ and hence $(-1+\alpha_{m},-1+2\alpha_{m})\cap (-1-2\alpha_{n},-1+2\alpha_{n})=\emptyset$ for any $m<n$.
	This implies that $\mu(\cBall{-1}{\varepsilon_{n}})= \sum_{k=n}^{\infty}2^{k} \alpha_{k}=a_{n}$.
	Similar arguments yield
	\begin{align}
		\label{equ:Large_lim_sup}
		\frac{\mu(\cBall{-1}{\varepsilon_{n}})}{\mu(\cBall{1}{\varepsilon_{n}})}
		&=
		\frac{\sum_{k=n}^{\infty} 2^{k} \alpha_{k}}{\sum_{k=n}^{\infty} 2^{k} \beta_{k}}
		=
		\frac{a_{n}}{b_{n}}
		=
		2,
		\\
		\label{equ:Lim_inf_equals_zero}
		\frac{\mu(\cBall{-1}{\delta_{n}})}{\mu(\cBall{1}{\delta_{n}})}
		&=
		\frac{\sum_{k=n+1}^{\infty} 2^{k} \alpha_{k}}{\sum_{k=n}^{\infty} 2^{k} \beta_{k}}
		=
		\frac{a_{n+1}}{b_{n}}
		=
		2^{-n-2}
		\xrightarrow[n\to\infty]{}
		0.
	\end{align}
	Thus, $\limsup_{\varepsilon \searrow 0} \frac{\mu(\cBall{-1}{\varepsilon})}{\mu(\cBall{1}{\varepsilon})} = 2$ and $\liminf_{\varepsilon \searrow 0} \frac{\mu(\cBall{-1}{\varepsilon})}{\mu(\cBall{1}{\varepsilon})} = 0$.
	For $x\in \supp(\mu)\setminus\{-1,1\}$, it follows from the disjointness of $([-1+\alpha_{n},-1+2\alpha_{n}])_{n\in\Naturals}$ and $([1-2\beta_{n},1-\beta_{n}])_{n\in\Naturals}$ that there exists a unique $m\in\Naturals$ such that either $x\in [-1+\alpha_{m},-1+2\alpha_{m}]$, or $x\in [1-2\beta_{m},1-\beta_{m}]$.
	In either case, for sufficiently small $\varepsilon$ it holds that $\mu(\cBall{x}{\varepsilon})=(2\varepsilon )2^m$, if $x$ is in the interior of either interval.
	If $x$ is an endpoint of one of the intervals, then for sufficiently small $\varepsilon$, $\mu(\cBall{x}{\varepsilon})=\varepsilon 2^m$, so it suffices to consider the case where $x$ is in the interior of one of the intervals.
	Since $\mu(\cBall{1}{\varepsilon_{n}})=b_{n}$, it follows that
	\begin{equation*}
		\frac{\mu(\cBall{x}{\varepsilon_{n}})}{\mu(\cBall{1}{\varepsilon_{n}})} = \frac{2^{m+1}2\alpha_{n}}{b_n}=\frac{2^{m+1-n}(a_{n}-a_{n+1})}{a_n}<2^{m+1-n}\frac{a_{n}}{a_{n}}=2^{m+1-n}\xrightarrow[n\to\infty]{}
		0.
	\end{equation*}
	Then $\liminf_{\varepsilon \searrow 0} \frac{\mu(\cBall{x}{\varepsilon})}{\mu(\cBall{1}{\varepsilon})} = 0$ for any point.
	Hence, for $E = \{ 1 \}$, the $\liminf$ part of property $M(\mu,E)$ is satisfied.
	Further, $u=1$ is a minimiser of any \ac{OM} functional on $E$ since $E=\{1\}$, but $u=1$ is not a global weak mode due to \eqref{equ:Large_lim_sup}.
	
	Note that the above example can be modified to be even more extreme:
	if one sets $a_{n} \defeq 2^{-n(n-1)}$ and $b_{n} \defeq \frac{a_{n}}{2^{n}}$, then one obtains that $a_{n}/b_{n} = 2^{n}$ and $a_{n+1}/b_{n} = 2^{-n}$, and hence
	\[
		\liminf_{\varepsilon \searrow 0} \frac{\mu(\cBall{-1}{\varepsilon})}{\mu(\cBall{1}{\varepsilon})}
		=
		0,
		\qquad
		\limsup_{\varepsilon \searrow 0} \frac{\mu(\cBall{-1}{\varepsilon})}{\mu(\cBall{1}{\varepsilon})}
		=
		\infty.
	\]
\end{example}

\begin{proposition}[Open v.\ closed balls]
	\label{prop:Open_Closed_Coincide}
	Let $X$ be a metric space, $\mu \in \prob{X}$ a probability measure on $(X,\Borel{X})$ and $x_{1},\, x_{2}\in X$ with $x_{2} \in \supp(\mu)$.
	For $\varepsilon>0$ define the ratios $\oRatio{\varepsilon} \defeq \frac{ \mu ( \cBall{x_{1}}{\varepsilon} ) }{ \mu ( \cBall{x_{2}}{\varepsilon} ) }$ and $\cRatio{\varepsilon} \defeq \frac{ \mu ( \ccBall{x_{1}}{\varepsilon} ) }{ \mu ( \ccBall{x_{2}}{\varepsilon} ) }$, where $\ccBall{x}{r}$ denotes the closed ball in $X$ of radius $r$ centred on $x$.
	Then
	\[
		\limsup_{\varepsilon \searrow 0} \cRatio{\varepsilon}
		=
		\limsup_{\varepsilon \searrow 0} \oRatio{\varepsilon}
		\qquad
		\text{and}
		\qquad
		\liminf_{\varepsilon \searrow 0} \cRatio{\varepsilon}
		=
		\liminf_{\varepsilon \searrow 0} \oRatio{\varepsilon}.
	\]
	Hence, $\lim_{\varepsilon \searrow 0} \cRatio{\varepsilon}$ exists if and only if $\lim_{\varepsilon \searrow 0} \oRatio{\varepsilon}$ exists, in which case these two values agree.
\end{proposition}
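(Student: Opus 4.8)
The plan is to reduce everything to the monotonicity and one-sided continuity of the four maps $\varepsilon \mapsto \mu(\cBall{x_{i}}{\varepsilon})$ and $\varepsilon \mapsto \mu(\ccBall{x_{i}}{\varepsilon})$, $i \in \{1,2\}$. First I would record the nesting inclusions $\cBall{x}{\delta} \subseteq \ccBall{x}{\varepsilon} \subseteq \cBall{x}{\delta'}$, valid whenever $0 < \delta < \varepsilon < \delta'$, together with the two continuity facts coming from $\ccBall{x}{\varepsilon} = \bigcap_{\delta > \varepsilon} \cBall{x}{\delta}$ and $\cBall{x}{\varepsilon} = \bigcup_{\delta < \varepsilon} \ccBall{x}{\delta}$, namely $\mu(\ccBall{x}{\varepsilon}) = \lim_{\delta \searrow \varepsilon} \mu(\cBall{x}{\delta})$ and $\mu(\cBall{x}{\varepsilon}) = \lim_{\delta \nearrow \varepsilon} \mu(\cBall{x}{\delta})$, both of which hold because $\mu$ is a finite measure. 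The hypothesis $x_{2} \in \supp(\mu)$ guarantees $\mu(\cBall{x_{2}}{\varepsilon}) > 0$, hence also $\mu(\ccBall{x_{2}}{\varepsilon}) > 0$, so every ratio below is a well-defined element of $[0, +\infty]$.

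The core of the argument is that, although $\oRatio{\varepsilon}$ and $\cRatio{\varepsilon}$ may genuinely differ at a given radius, each can be approximated by the other at a slightly perturbed radius, provided the perturbation is taken from the correct side. For $\limsup_{\varepsilon \searrow 0} \oRatio{\varepsilon} \geq \limsup_{\varepsilon \searrow 0} \cRatio{\varepsilon}$ I would take $\varepsilon_{n} \searrow 0$ realising the right-hand $\limsup$ and, for each $n$, pick $\delta_{n} > \varepsilon_{n}$ so close to $\varepsilon_{n}$ that $\mu(\cBall{x_{2}}{\delta_{n}}) \leq (1 + \tfrac{1}{n})\, \mu(\ccBall{x_{2}}{\varepsilon_{n}})$, which is possible by continuity from above; since $\mu(\cBall{x_{1}}{\delta_{n}}) \geq \mu(\ccBall{x_{1}}{\varepsilon_{n}})$ by the nesting, this gives $\oRatio{\delta_{n}} \geq (1 + \tfrac{1}{n})^{-1} \cRatio{\varepsilon_{n}}$, and letting $n \to \infty$ yields the claim. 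For the reverse inequality I would instead approach from below: given $\varepsilon_{n} \searrow 0$ realising $\limsup \oRatio{\varepsilon}$, choose $\delta_{n} < \varepsilon_{n}$ with $\mu(\cBall{x_{1}}{\delta_{n}})$ within $\tfrac{1}{n}\mu(\cBall{x_{2}}{\varepsilon_{n}})$ of $\mu(\cBall{x_{1}}{\varepsilon_{n}})$ (continuity from below); the nesting gives $\mu(\ccBall{x_{2}}{\delta_{n}}) \leq \mu(\cBall{x_{2}}{\varepsilon_{n}})$, whence $\cRatio{\delta_{n}} \geq \mu(\cBall{x_{1}}{\delta_{n}})/\mu(\cBall{x_{2}}{\varepsilon_{n}}) \geq \oRatio{\varepsilon_{n}} - \tfrac{1}{n}$. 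Both perturbed sequences still tend to $0$, so the two $\limsup$s coincide.

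The two $\liminf$ inequalities are entirely analogous with the roles of the one-sided approximations reversed: to bound $\cRatio{\varepsilon}$ above by a nearby $\oRatio{\delta}$ one approaches from below using $\ccBall{x_{1}}{\delta} \subseteq \cBall{x_{1}}{\varepsilon}$, and to bound $\oRatio{\varepsilon}$ above by a nearby $\cRatio{\delta}$ one approaches from above using $\cBall{x_{2}}{\delta} \supseteq \ccBall{x_{2}}{\varepsilon}$; equality of the $\liminf$s then gives that $\lim_{\varepsilon \searrow 0} \oRatio{\varepsilon}$ exists if and only if $\lim_{\varepsilon \searrow 0} \cRatio{\varepsilon}$ does, with equal values. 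The main obstacle — and the reason a one-line ``open and closed balls differ only on a null set'' argument fails — is precisely that the spheres $\{\, y : d(x_{i}, y) = \varepsilon \,\}$ may carry positive $\mu$-mass, so that $\oRatio{\varepsilon} \neq \cRatio{\varepsilon}$ exactly at such radii; the key idea is that each of these (at most countably many) exceptional radii is harmless because the relevant extremal limit can be recomputed by approaching it from the side on which the pertinent ball measure is continuous. A minor point to treat with care is the case of an infinite $\limsup$ or of vanishing numerators, where the multiplicative error factors $(1 \pm \tfrac{1}{n})$ are better replaced by additive perturbations of the denominators, but this does not alter the structure of the argument.
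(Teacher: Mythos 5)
Your proof is correct and takes essentially the same approach as the paper's: both arguments rest on the one-sided continuity of the ball masses coming from $\ccBall{x}{\varepsilon} = \bigcap_{\delta > \varepsilon} \cBall{x}{\delta}$ and $\cBall{x}{\varepsilon} = \bigcup_{\delta < \varepsilon} \ccBall{x}{\delta}$, and both transfer a sequence of radii realising the extremal value of one ratio into a slightly perturbed null sequence of radii for the other ratio. The only difference is presentational: the paper argues by contradiction and passes to the limit of the whole ratio, $\lim_{\delta \searrow 0} \oRatio{\varepsilon_{n} + \delta} = \cRatio{\varepsilon_{n}}$, whereas you argue directly, handling numerator and denominator separately with explicit multiplicative and additive error terms.
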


\begin{proof}
	First assume that $\limsup_{\varepsilon \searrow 0} \cRatio{\varepsilon} > \limsup_{\varepsilon \searrow 0} \oRatio{\varepsilon} \eqqcolon \mathring{s}$.
	Then there exists $\zeta > 0$ and a positive null sequence $(\varepsilon_{n})_{n\in\Naturals}$ such that $\cRatio{\varepsilon_{n}} \geq \mathring{s} + \zeta$.
	For each $n\in\Naturals$ perform the following construction:
	Since $\bigcap_{\delta > 0} \cBall{x}{\varepsilon_{n} + \delta} = \ccBall{x}{\varepsilon_{n}}$ for any $x\in X$ and using continuity of probability measures, we obtain
	\[
		\lim_{\delta \searrow 0} \oRatio{\varepsilon_{n} + \delta}
		=
		\frac{ \lim_{\delta \searrow 0} \mu ( \cBall{x_{1}}{\varepsilon_{n} + \delta} ) }{ \lim_{\delta \searrow 0} \mu ( \cBall{x_{2}}{\varepsilon_{n} + \delta} ) }
		=
		\frac{ \mu ( \ccBall{x_{1}}{\varepsilon_{n}} ) }{ \mu ( \ccBall{x_{2}}{\varepsilon_{n}} ) }
		=
		\cRatio{\varepsilon_{n}}
		\geq
		\mathring{s} + \zeta
	\]
	and there exits $0 < \delta_{n} < n^{-1}$ such that $\oRatio{\varepsilon_{n} + \delta_{n}} \geq \mathring{s} + \zeta/2$.
	Hence, we have constructed a null sequence $(\tilde{\varepsilon}_{n})_{n\in\Naturals} \defeq (\varepsilon_{n} + \delta_{n})_{n\in\Naturals}$ with
	\[
		\mathring{s}
		=
		\limsup_{\varepsilon \searrow 0} \oRatio{\varepsilon}
		\geq
		\limsup_{n\to \infty} \oRatio{\tilde{\varepsilon}_{n}}
		\geq \mathring{s} + \zeta/2,
	\]
	which is a contradiction.
	Therefore, our assumption was false and $\limsup_{\varepsilon \searrow 0} \cRatio{\varepsilon} \leq \limsup_{\varepsilon \searrow 0} \oRatio{\varepsilon}$.
	The other inequality can be proven similarly using $\bigcup_{\delta > 0} \ccBall{x}{\varepsilon_{n} - \delta} = \cBall{x}{\varepsilon_{n}}$ and a similar argument works for the corresponding $\liminf$ statement.
\end{proof}

\begin{example}[\ac{OM} functionals and changes of metric]%
	\label{example:OM_and_change_of_metric}%
	Following \citet[Example~5.6]{LieSullivan2018}, let $\mu$ be the finite Borel measure on $(\Reals^{2}, \Borel{\Reals^{2}})$ that is one-dimensional Hausdorff measure (i.e.\ uniform length measure) on the disjoint union $E$ of two right-angled crosses in the plane, with one cross, $E_{+}$, aligned with the coordinate axes and centred at $e_{1} \defeq (1, 0)$ and the other, $E_{-}$, aligned at $\pi / 4$ to the axes and centred at $-e_{1}$, \change{as illustrated in \Cref{figure:OM_and_change_of_metric}.}
	(Note that there is a slight error in \citep[Example~5.6]{LieSullivan2018} concerning the side lengths of the cross $E_{-}$ and hence the total mass of $\mu$, but this error does not affect the final conclusion of that example or this one, since it is only the mass near $\pm e_{1}$ that is important.)
	With respect to the $1$-norm,
	\begin{align*}
		\mu( \cBallExtra{-e_{1}}{r}{1} ) & = 2 \sqrt{2} r , &
		\mu( \cBallExtra{e_{1}}{r}{1} ) & = 4 r , \\
		\intertext{whereas, with respect to the $\infty$-norm, which in this setting is Lipschitz equivalent to the $1$-norm,}
		\mu( \cBallExtra{-e_{1}}{r}{\infty} ) & = 4 \sqrt{2} r , &
		\mu( \cBallExtra{e_{1}}{r}{\infty} ) & = 4 r ,
	\end{align*}
	and, after considering the other points of $\Reals^{2}$, it follows that $e_{1}$ (resp.\ $-e_{1}$) is the unique strong and global weak mode of $\mu$ with respect to the $1$-norm (resp.\ $\infty$-norm).
	These same calculations, though, \change{can be used to} show that $\mu$ has an \ac{OM} functional \change{$I^{1} \colon E \to \Reals$ (resp.\ $I^{\infty} \colon E \to \Reals$) with respect to the $1$-norm (resp.\ $\infty$-norm) and moreover}
	\begin{align*}
		I^{1}(-e_{1}) & = I^{1}(e_{1}) + \log \sqrt{2} , \\
		I^{\infty}(-e_{1}) & = I^{\infty}(e_{1}) - \log \sqrt{2} .
	\end{align*}
	Indeed, more generally, $I^{1}$ takes greater values on $E_{-}$ than on $E_{+}$, whereas $I^{\infty}$ takes greater values on $E_{+}$ than on $E_{-}$.
	This shows that these two \ac{OM} functionals are distinct (i.e.\ differ by more than an additive constant).
\end{example}

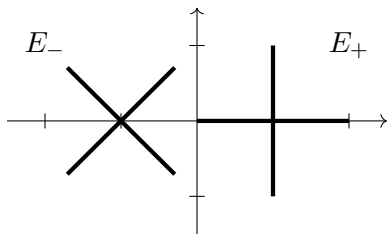
\begin{figure}[t]
	\centering
	\begin{tikzpicture}
		\draw[->] (-2.5, 0.0) -- (2.5, 0.0) ;
		\draw (-2.0, -0.1) -- (-2.0, 0.1) ;
		\draw (-1.0, -0.1) -- (-1.0, 0.1) ;
		\draw (1.0, -0.1) -- (1.0, 0.1) ;
		\draw (2.0, -0.1) -- (2.0, 0.1) ;
		\draw[->] (0.0, -1.5) -- (0.0, 1.5) ;
		\draw (-0.1, 1.0) -- (0.1, 1.0) ;
		\draw (-0.1, -1.0) -- (0.1, -1.0) ;
		\draw[ultra thick] (0.0, 0.0) -- (2.0, 0.0) ;
		\draw[ultra thick] (1.0, -1.0) -- (1.0, 1.0) ;
		\draw[ultra thick] (-1.707, 0.707) -- (-0.293, -0.707) ;
		\draw[ultra thick] (-1.707, -0.707) -- (-0.293, 0.707) ;
		\node at (-2, 1) {$E_{-}$} ;
		\node at (2, 1) {$E_{+}$} ;
	\end{tikzpicture}
	\caption{\change{Uniform length measure $\mu$ on the set $E = E_{-} \cup E_{+} \subset \Reals^{2}$ shown here has a unique strong mode and global weak mode at $(1, 0)$ with respect to the $1$-norm on $\Reals^{2}$, but at $({-1}, 0)$ with respect to the $\infty$-norm.
	The \ac{OM} functionals for $\mu$ associated to these two norms are likewise distinct, as discussed in \Cref{example:OM_and_change_of_metric}.}}
	\label{figure:OM_and_change_of_metric}
\end{figure}

\subsection{Supporting results for \Cref{sec:modes_and_OM_functionals}}

\begin{example}
	\label{example:Minimiser_of_OM_not_strong_mode}
	Let $X = \Reals$, $E = \Naturals$ and $\mu \in \prob{X}$ have Lebesgue density \change{$\rho \defeq \frac{24}{5\pi^{2}} \sum_{k\in\Naturals} \rho_{k}$}, shown in \Cref{fig:Minimiser_of_OM_not_strong_mode}, where
	\begin{align}
		\rho_{0}(x)
		& \defeq
		\tfrac{1}{4} \, (\absval{x}^{-1/2} - 2) \, \one_{[-\frac{1}{4},\frac{1}{4}]\setminus\{0\}}(x) \\
		\rho_{k}(x)
		& \defeq
		\change{\frac{ \rho_{0}(x-k) }{ k^{2} } + k^{2}\, \one_{\big[-\frac{1}{2k^{4}},\frac{1}{2k^{4}}\big]}(x -k) .}
	\end{align}
	Since $\supp \rho_0=[-\tfrac{1}{4},\tfrac{1}{4}]$ and \change{$\supp \rho_{k}=[k-\tfrac{1}{4},k+\tfrac{1}{4}]$,} it follows that if \change{$k\neq \ell $ then $\supp \rho_{k}\cap \supp \rho_{\ell}=\emptyset$.}
	In addition, for $r\leq \tfrac{1}{4}$, $\int_{\cBall{0}{r}} \rho_0(x)\ud x =r^{1/2}-r$.
	This implies that $\int_{X} \rho_0=\tfrac{1}{4}$.
	Hence \change{$\int_{X} \rho_{k} = \frac{5}{4k^{2}}$} and $\int_{X}\rho =1$.
	For every \change{$k\in\Naturals$ and $r\leq \min\{\tfrac{1}{4},\tfrac{1}{2k^4}\}$, $\int_{\cBall{k}{r}}\rho_{k}(x)\ud x=k^{-2} (r^{1/2}-r)+2rk^2$.}
	Then, for $u = 1$ and any \change{$k\in E = \Naturals$,
	\[
		\frac{\mu(\cBall{u}{r})}{\mu(\cBall{k}{r})}
		\xrightarrow{r \searrow 0}
		k^{2},
	\]}
	which implies that $u=1$ is \change{a weak mode or weak MAP estimate in the sense of \citep[Definition 4]{HelinBurger2015} and hence a global weak mode in the sense of \Cref{defn:global_weak_mode}. In addition, $I_{\mu,E}(k) = 2 \log k$ defines} an \ac{OM} functional for $\mu$ on $E$, with unique minimiser $u=1$.
	Next, we show that property $M(\mu,E)$ holds.
	If \change{$x\notin \bigcup_{k\in\Naturals}\supp\rho_{k}$,} then for sufficiently small $r>0$, $\mu(\cBall{x}{r})=0$.
	Thus, it suffices to consider \change{$x\in\bigcup_{k\in\Naturals}\supp\rho_{k}\setminus\Naturals$.}
	There exists a unique \change{$m\in\Naturals=E$} such that $x\in \supp\rho_m\setminus\{m\}$.
	Assume that $x=m+\delta$ for $0<\delta\leq\tfrac{1}{4}$.
	Then, for $r$ small enough,
	\begin{equation*}
		\mu(\cBall{x}{r})=\frac{1}{2m^2}\left(\sqrt{\delta+r}-\sqrt{\delta-r}\right)-r+2m^2r\approx \frac{1}{2m^2}\left(\frac{1}{2\delta^{1/2}} 2r\right)+r(2m^2-1),
	\end{equation*}
    using the Taylor expansion of $y\mapsto\sqrt{y}$.
    Thus, $\mu(\cBall{x}{r})$ decreases to zero linearly in $r$, whereas \change{$\mu(\cBall{m}{r})$} decreases to zero like $r^{1/2}$.
    \change{Recall that property $M(\mu,E)$ holds if there exists some $x^{\star}\in E$ such that if $x\in X \setminus E$ then \eqref{eq:liminf_small_ball_prob} holds, i.e. $\lim_{r \searrow 0} \frac{ \mu ( \cBall{x}{r} ) }{ \mu ( \cBall{x^{\star}}{r} ) } = 0$.
    Using that $x=m+\delta$ and $x^{\star}=m$ shows that property $M(\mu,E)$ holds.}
    
    \change{Next, recall from \eqref{eq:Onsager--Machlup} that if $I=I_{\mu,E}:E\to\Reals$ is an \ac{OM} functional for $\mu$, then $I$ must satisfy $\lim_{r \searrow 0} \frac{ \mu ( \cBall{x_{1}}{r} ) }{ \mu ( \cBall{x_{2}}{r} ) }
		=
		\exp ( I(x_{2}) - I(x_{1}) )$ for all $x_{1}, x_{2} \in E$. 
    For $x_1=1$ and $x_2=\tfrac{5}{4}$, the preceding calculations show that the ratio $\frac{ \mu ( \cBall{x_{1}}{r} ) }{ \mu ( \cBall{x_{2}}{r} ) }$ increases to $\infty$ as $r\searrow 0$.}
    If, on the other hand, we set $x_1=\tfrac{5}{4}$ and $x_2=1$, then the resulting limiting ratio equals $0$.
    This shows that the domain of the \ac{OM} functional $I_{\mu,E}$ cannot be extended beyond $E=\Naturals$.
	However, for $u = 1$, $n \in E = \Naturals$ and $r_{n} = \frac{1}{2n^{4}}$,
	\[
		\liminf_{r \searrow 0}
		\frac{\mu(\cBall{u}{r})}{M_{r}}
		\leq
		\liminf_{n \to \infty}
		\frac{\mu(\cBall{u}{r_{n}})}{\mu(\cBall{n}{r_{n}})}
		\leq
		\liminf_{n \to \infty}
		\frac{\frac{1}{\sqrt{2}\, n^{2}} + \frac{1}{n^{4}}}{\frac{1}{n^{2}}}
		=
		\frac{1}{\sqrt{2}}.
	\]
	Thus, $u=1$ cannot be a strong mode of $\mu$, even though it minimises $I_{\mu,E}$.
\end{example}

\begin{figure}
	\centering
	\includegraphics[width=0.9\textwidth]{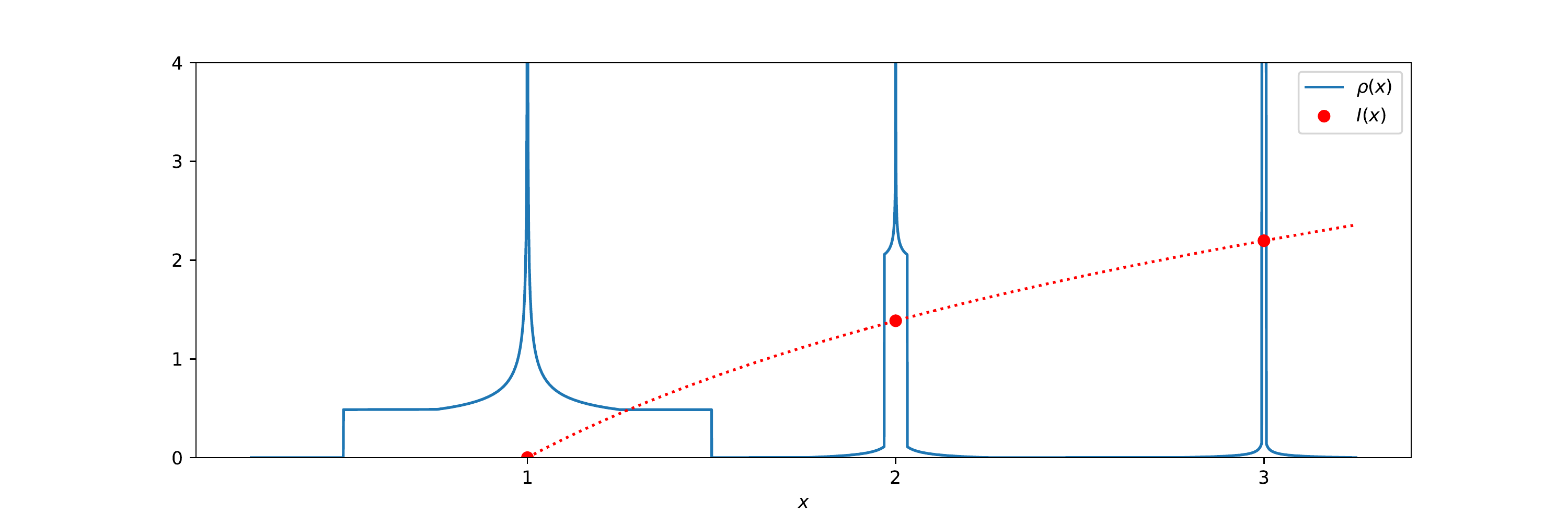}
	\caption{Probability density $\rho$ in \Cref{example:Minimiser_of_OM_not_strong_mode}.
	$u=1$ is a minimiser of the \ac{OM} functional $I_{\mu,E}$ and an $E$-weak mode for $E = \Naturals$, but not a strong mode.
	}
	\label{fig:Minimiser_of_OM_not_strong_mode}
\end{figure}

\subsection{Supporting results for \Cref{sec:convergence_of_OM_functionals}}

\begin{lemma}
	\label{lemma:TechnicalLemmaOnConvergenceInl2}
	Let $a^{(n)} = (a_{k}^{n})_{k\in\N} \in \ell^{2}$, $n\in\N$, define a bounded sequence in $\ell^{2}$, i.e.\ there exists a constant $M > 0$ such that $\norm{a^{(n)}}_{\ell^{2}} \leq M$ for each $n\in\N$.
	Further, let $a_{k}^{(n)} \xrightarrow[n\to\infty]{} a_{k} \in \R$ for each $k\in \N$.
	Then $a \defeq (a_{k})_{k\in\N} \in \ell^{2}$ and $\norm{a}_{\ell^{2}} \leq M$.
\end{lemma}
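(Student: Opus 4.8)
The plan is to verify both conclusions — membership in $\ell^{2}$ and the bound $\norm{a}_{\ell^{2}} \le M$ — on finite truncations first, where coordinatewise convergence suffices to pass to the limit in $n$, and only afterwards let the truncation parameter tend to infinity. This two-stage order is the whole point: an infinite sum need not commute with a merely coordinatewise limit, but a finite sum always does.

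First I would fix an arbitrary $K \in \N$ and look only at the first $K$ coordinates. For each fixed $k$, the scalar convergence $a_{k}^{(n)} \to a_{k}$ yields $(a_{k}^{(n)})^{2} \to a_{k}^{2}$, and since a finite sum of convergent real sequences converges to the sum of the limits,
\[
	\sum_{k=1}^{K} a_{k}^{2}
	=
	\lim_{n \to \infty} \sum_{k=1}^{K} (a_{k}^{(n)})^{2} .
\]
Because $\sum_{k=1}^{K} (a_{k}^{(n)})^{2} \le \norm{a^{(n)}}_{\ell^{2}}^{2} \le M^{2}$ for every $n \in \N$, the limit on the right is at most $M^{2}$, and hence $\sum_{k=1}^{K} a_{k}^{2} \le M^{2}$.

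Second, I would let $K \to \infty$. The partial sums $\sum_{k=1}^{K} a_{k}^{2}$ are nondecreasing in $K$ and uniformly bounded above by $M^{2}$, so they converge and satisfy $\sum_{k \in \N} a_{k}^{2} \le M^{2}$. This is exactly the statement that $a = (a_{k})_{k \in \N} \in \ell^{2}$ with $\norm{a}_{\ell^{2}} \le M$, as claimed.

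There is essentially no genuine obstacle here; the single point requiring care is precisely the one the truncation is designed to finesse, namely that coordinatewise convergence does not by itself interchange with an infinite summation, so one must commit to finitely many coordinates before taking the limit in $n$ and only then invoke monotone convergence of the real partial sums. Equivalently, the same conclusion can be phrased as lower semicontinuity of the $\ell^{2}$-norm under coordinatewise convergence, or obtained in one stroke from Fatou's lemma applied to counting measure on $\N$; I would present the elementary truncation argument since it is self-contained and needs no external machinery.
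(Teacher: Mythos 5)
Your proof is correct and rests on the same core idea as the paper's: restrict to a finite truncation $\sum_{k=1}^{K}$, where coordinatewise convergence can be combined with the uniform bound $\norm{a^{(n)}}_{\ell^{2}}^{2} \leq M^{2}$, and only then let $K \to \infty$. The sole difference is presentational --- the paper argues by contradiction (assuming some finite partial sum exceeds $M^{2} + \varepsilon$ and deriving a contradiction via explicit $\varepsilon/K$ bookkeeping), whereas you pass to the limit in $n$ directly and then invoke monotonicity of the partial sums, which is a slightly cleaner rendering of the same argument.
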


\begin{proof}
	Assume that there exists $K \in \N$ such that $\sum_{k=1}^{K} \absval{a_{k}}^{2} > M^{2} + \varepsilon$ for some $0 < \varepsilon < 1$.
	Since $a_{k}^{(n)} \xrightarrow[n\to\infty]{} a_{k}$ for each $k\in \N$, there exists, for each $k = 1,\dots,K$, a number $N(k)\in\N$ such that, for all $n\geq N(k)$, $\absval{a_{k}^{(n)} - a_{k}} < \frac{\varepsilon}{4K(\absval{a_{k}} + 1)} \ (\leq 1)$.
	Hence, for each $k = 1,\dots,K$ and $N \defeq \max(N(1),\dots,N(K))$,
	\[
		\absval{a_{k}}^{2}
		\leq
		(\absval{a_{k}^{(N)}} + \absval{a_{k}^{(N)} - a_{k}})^{2}
		=
		\absval{a_{k}^{(N)}}^{2}
		+ 2 \underbrace{\absval{a_{k}^{(N)}}}_{\leq \absval{a_{k}}+1}
		\underbrace{\absval{a_{k}^{(N)} - a_{k}}}_{\leq \frac{\varepsilon}{4K(\absval{a_{k}} + 1)}}
		+ \underbrace{\absval{a_{k}^{(N)} - a_{k}}^{2}}_{\leq \frac{\varepsilon}{4K(\absval{a_{k}} + 1)}}
		\leq
		\absval{a_{k}^{(N)}}^{2} + \frac{\varepsilon}{K}.
	\]
	Therefore, $\sum_{k=1}^{K} \absval{a_{k}}^{2} \leq M^{2} + \varepsilon$, yielding a contradiction.
	Hence, our assumption was false and the lemma is proven.
\end{proof}

\begin{remark}
	\Cref{lemma:TechnicalLemmaOnConvergenceInl2} does not state that $\norm{a^{(n)} - a}_{\ell^{2}} \to 0$ and, in fact, this is not true in general.
	A counterexample is provided by $a=0$ and $a^{(n)} = (\delta_{nk})_{k\in\N}$, where $\delta_{nk}$ denotes the Kronecker delta function.
\end{remark}

\subsection{Supporting results for \Cref{sec:MAP_in_BIP}}

Recall that a function $f \colon X \to Y$ between metric spaces $X$ and $Y$ is \defterm{locally uniformly continuous} if, for every $x \in X$, there exists a function $\omega_{f, x} \colon [0, \infty) \to [0, \infty]$, a \defterm{local modulus of continuity} for $\Phi$ near $x$, such that
\begin{align}
	\label{eq:loc_mod_cty_1}
	\text{for all $x' \in X$,}
	& \quad d_{Y} ( f(x'), f(x) ) \leq \omega_{f, x} \bigl( d_{X} (x', x ) \bigr) \\
	\label{eq:loc_mod_cty_2}
	\text{and}
	& \quad \omega_{f, x} (r) \to 0 \text{ as } r \to 0 .
\end{align}
In particular, \eqref{eq:loc_mod_cty_2} implies that, for each $x \in X$, there exists $r_{x} > 0$ such that $\omega_{f, x} (r_{x})$ is finite for all $0 \leq r \leq r_{x}$.
It is no loss of generality to assume that $\omega_{f, x}$ is an increasing function.
Local uniform continuity is slightly but strictly stronger than $f$ being continuous: according to \citet[Theorem~1]{Izzo1994}, on every infinite-dimensional separable normed space there exist bounded, continuous real-valued functions that are nowhere locally uniformly continuous;
however, \citet[Theorem~4]{Izzo1994} also shows that every continuous real-valued function on a metric space can be approximated uniformly by locally uniformly continuous functions.

\begin{lemma}[\ac{OM} functionals for reweighted measures]
	\label{lem:reweighted_OM_loc_unif_cts}
	Let $X$ be a metric space and let $\mu_{0} \in \prob{X}$ have an \ac{OM} functional $I_{0}$ on $E \subseteq X$.
	Let $\Phi \colon X \to \Reals$ be locally uniformly continuous.
	Suppose that $Z \defeq \int_{X} e^{-\Phi(x)} \, \mu_{0} (\rd x) \in (0, \infty)$.
	Then $\mu \in \prob{X}$ defined by $\mu (\rd x) \defeq Z^{-1} e^{-\Phi(x)} \, \mu_{0} (\rd x)$ has $I \defeq \Phi + I_{0}$ as an \ac{OM} functional on $E$.
	If, furthermore, property $M (\mu_{0}, E)$ holds and $I_{0}$ is extended to $I_{0} \colon X \to \eReals$ by setting $I_{0} \equiv + \infty$ on $X \setminus E$, then property $M (\mu, E)$ holds and $\Phi + I_{0}$ is an extended \ac{OM} functional for $\mu$.
\end{lemma}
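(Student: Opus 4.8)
The plan is to treat the two assertions separately: first that $I = \Phi + I_{0}$ is an \ac{OM} functional for $\mu$ on $E$, and then that property $M(\mu, E)$ transfers from $\mu_{0}$. For the first assertion I would begin by observing that, since $e^{-\Phi}$ is strictly positive and finite everywhere, $\mu$ and $\mu_{0}$ are mutually absolutely continuous, so $\supp(\mu) = \supp(\mu_{0}) \supseteq E$; in particular $\mu(\cBall{x}{r}) > 0$ and $\mu_{0}(\cBall{x}{r}) > 0$ for every $x \in E$ and $r > 0$, so the ratios in \eqref{eq:Onsager--Machlup} are well-defined. Fixing $x_{1}, x_{2} \in E$ and cancelling the normalising constant $Z$, I would write
\[
	\frac{\mu(\cBall{x_{1}}{r})}{\mu(\cBall{x_{2}}{r})}
	=
	\frac{\int_{\cBall{x_{1}}{r}} e^{-\Phi} \ud\mu_{0} \,\big/\, \mu_{0}(\cBall{x_{1}}{r})}{\int_{\cBall{x_{2}}{r}} e^{-\Phi} \ud\mu_{0} \,\big/\, \mu_{0}(\cBall{x_{2}}{r})}
	\cdot
	\frac{\mu_{0}(\cBall{x_{1}}{r})}{\mu_{0}(\cBall{x_{2}}{r})} ,
\]
whose second factor converges to $\exp(I_{0}(x_{2}) - I_{0}(x_{1}))$ by the hypothesis that $I_{0}$ is an \ac{OM} functional for $\mu_{0}$.

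The key step is to show that, for each $x \in E$, the local average $\int_{\cBall{x}{r}} e^{-\Phi} \ud\mu_{0} / \mu_{0}(\cBall{x}{r})$ converges to $e^{-\Phi(x)}$ as $r \searrow 0$. Here I would use the local modulus of continuity $\omega_{\Phi, x}$ from \eqref{eq:loc_mod_cty_1}--\eqref{eq:loc_mod_cty_2}: for $r$ below the threshold $r_{x}$ where $\omega_{\Phi,x}$ is finite and for $y \in \cBall{x}{r}$, one has $\absval{\Phi(y) - \Phi(x)} \leq \omega_{\Phi,x}(r)$, whence
\[
	e^{-\Phi(x)} e^{-\omega_{\Phi,x}(r)}
	\leq
	\frac{\int_{\cBall{x}{r}} e^{-\Phi} \ud\mu_{0}}{\mu_{0}(\cBall{x}{r})}
	\leq
	e^{-\Phi(x)} e^{\omega_{\Phi,x}(r)} ,
\]
and letting $r \searrow 0$ together with $\omega_{\Phi,x}(r) \to 0$ squeezes the average to $e^{-\Phi(x)}$. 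Applying this at $x_{1}$ and $x_{2}$, the first factor in the display above converges to $e^{\Phi(x_{2}) - \Phi(x_{1})}$; multiplying the two limits yields $\exp\bigl((\Phi + I_{0})(x_{2}) - (\Phi + I_{0})(x_{1})\bigr)$, which is exactly \eqref{eq:Onsager--Machlup} for $I = \Phi + I_{0}$.

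For the second assertion, property $M(\mu, E)$ follows immediately from \Cref{lem:vanishing_small_ball_prob}\ref{item:vanishing_small_ball_prob_transfer}, and the extension claim is then formal: extending $I_{0}$ to $+\infty$ on $X \setminus E$ and adding the everywhere-finite $\Phi$ gives $I = \Phi + I_{0} \equiv +\infty$ on $X \setminus E$, which matches the canonical extension of the \ac{OM} functional for $\mu$ under property $M(\mu, E)$ as prescribed in \Cref{defn:Onsager--Machlup}. I expect the only mildly delicate point to be reconciling the ``bounded on bounded subsets'' hypothesis of \Cref{lem:vanishing_small_ball_prob} with the weaker local uniform continuity assumed here; this is harmless, because the proof of that lemma only invokes boundedness of $\Phi$ on the two small balls $\cBall{x}{r}$ and $\cBall{x^{\star}}{r}$, and the modulus estimate $\absval{\Phi(y) - \Phi(x)} \leq \omega_{\Phi,x}(r)$ bounds $\Phi$ on every sufficiently small ball, which is all that is required.
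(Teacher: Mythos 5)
Your proof is correct and follows essentially the same route as the paper's: both arguments rest on the sandwich bound $e^{-\Phi(x)-\omega_{\Phi,x}(r)}\,\mu_{0}(\cBall{x}{r}) \leq \int_{\cBall{x}{r}} e^{-\Phi}\,\rd\mu_{0} \leq e^{-\Phi(x)+\omega_{\Phi,x}(r)}\,\mu_{0}(\cBall{x}{r})$ furnished by the local modulus of continuity (your factoring of the ratio into a local-average factor times the $\mu_{0}$-ratio is only a cosmetic reorganisation of the paper's direct sandwich of $\mu(\cBall{u}{r})/\mu(\cBall{v}{r})$), and both dispatch the second claim via \Cref{lem:vanishing_small_ball_prob}\ref{item:vanishing_small_ball_prob_transfer}. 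Your closing remark is in fact more careful than the paper, which cites \Cref{lem:vanishing_small_ball_prob}\ref{item:vanishing_small_ball_prob_transfer} without noting that its ``bounded on bounded subsets'' hypothesis is not implied by local uniform continuity in infinite dimensions; your observation that the cited proof only needs boundedness of $\Phi$ on sufficiently small balls around $x$ and $x^{\star}$ --- which the modulus estimate supplies --- is exactly what is required to close that gap.
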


\begin{proof}
	Let $u, v \in E$ and let $r_{0}$ be small enough that both $\omega_{\Phi, u} (r_{0})$ and $\omega_{\Phi, v} (r_{0})$ are finite;
	without loss of generality we henceforth consider only $0 < r < r_{0}$.
	By \eqref{eq:loc_mod_cty_1}, on $\cBall{u}{r}$, $\Phi$ satisfies the bound
	\[
	\Phi(u) - \omega_{\Phi, u} (r) \leq \Phi (\quark) \leq \Phi(u) + \omega_{\Phi, u} (r) .
	\]
	Since $\mu (A) = Z^{-1} \int_{A} \exp (- \Phi(x)) \, \mu_{0} (\rd x)$ for each measurable set $A \in \Borel{X}$, it follows that
	\[
	Z^{-1} \exp ( - \Phi(u) - \omega_{\Phi, u} (r) ) \mu_{0} ( \cBall{u}{r} ) \leq \mu ( \cBall{u}{r} ) \leq Z^{-1} \exp ( - \Phi(u) + \omega_{\Phi, u} (r) ) \mu_{0} ( \cBall{u}{r} ) .
	\]
	Similar arguments apply mutatis mutandis for $v$ in place of $u$.
	Hence,
	\[
	\frac{ \exp ( - \Phi(u) - \omega_{\Phi, u} (r) ) \mu_{0} ( \cBall{u}{r} ) }{ \exp ( - \Phi(v) + \omega_{\Phi, v} (r) ) \mu_{0} ( \cBall{v}{r} ) }
	\leq
	\frac{ \mu ( \cBall{u}{r} ) }{ \mu ( \cBall{v}{r} ) }
	\leq
	\frac{ \exp ( - \Phi(u) + \omega_{\Phi, u} (r) ) \mu_{0} ( \cBall{u}{r} ) }{ \exp ( - \Phi(v) - \omega_{\Phi, v} (r) ) \mu_{0} ( \cBall{v}{r} ) } ,
	\]
	and so, since $I_{0}$ is an \ac{OM} functional for $\mu_{0}$ and both $\omega_{\Phi, u} (r)$ and $\omega_{\Phi, v} (r)$ tend to $0$ as $r \to 0$,
	\[
	\exp( - \Phi(u) + \Phi(v) + I_{0}(v) - I_{0}(u) )
	\leq
	\lim_{r \searrow 0} \frac{ \mu ( \cBall{u}{r} ) }{ \mu ( \cBall{v}{r} ) }
	\leq
	\exp( - \Phi(u) + \Phi(v) + I_{0}(v) - I_{0}(u) ).
	\]
	which proves the first claim.

	The second claim is an immediate consequence of the first part and \Cref{lem:vanishing_small_ball_prob}\ref{item:vanishing_small_ball_prob_transfer}.
\end{proof}

\begin{lemma}[Continuous convergence of potentials via projection]
	\label{lem:continuous_convergence_potentials}
	Let $X$ be a separable Banach space and let $(X_{n})_{n \in \Naturals}$ be a sequence of (not necessarily nested) finite-dimensional subspaces with surjective uniformly bounded linear projection operators $P_{n} \colon X \to X_{n}$ such that
	\begin{equation}
		\label{eq:convergence_of_projections}
		\text{for all $x \in X$,}
		\quad
		\lim_{n \to \infty} \norm{ P_{n} x - x } = 0 .
	\end{equation}
	Let $\Phi \colon X \to \Reals$ be locally uniformly continuous.
	\begin{enumerate}[label=(\alph*)]
		\item \label{lem:continuous_convergence_potentials_lucts} For each $n \in \Naturals$, $\Phi \circ P_{n}$ is locally uniformly continuous.
		\item \label{lem:continuous_convergence_potentials_cvgt} $\Phi \circ P_{n} \to \Phi$ continuously as $n \to \infty$.
	\end{enumerate}
\end{lemma}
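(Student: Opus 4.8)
The plan is to treat the two claims separately, giving an explicit local modulus of continuity for each composition in the first part and then verifying the neighbourhood formulation of \Cref{defn:continuous_convergence} directly in the second. Throughout I would abbreviate $C \defeq \sup_{n \in \Naturals} \norm{P_n} \in (0, \infty)$ for the uniform operator bound supplied by hypothesis (this is in fact automatic from \eqref{eq:convergence_of_projections} by the uniform boundedness principle on the Banach space $X$, and $C > 0$ since a nonzero surjective projection has norm at least $1$). The only structural facts I need are that each $P_n$ is $C$-Lipschitz by linearity and boundedness, and that $\Phi$ has local moduli of continuity satisfying \eqref{eq:loc_mod_cty_1}--\eqref{eq:loc_mod_cty_2}, which may be taken increasing.

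For the first claim (local uniform continuity of each $\Phi \circ P_n$), fix $n \in \Naturals$ and $x \in X$ and let $\omega_{\Phi, P_n x}$ be an increasing local modulus of continuity for $\Phi$ at the point $P_n x$. I would simply define
\[
    \omega_{\Phi \circ P_n, x}(r) \defeq \omega_{\Phi, P_n x}(C r), \qquad r \geq 0 .
\]
Then for every $x' \in X$, the Lipschitz bound $\norm{P_n x' - P_n x} \leq C \norm{x' - x}$, the inequality \eqref{eq:loc_mod_cty_1} for $\Phi$ at $P_n x$, and monotonicity of $\omega_{\Phi, P_n x}$ combine to give $\absval{\Phi(P_n x') - \Phi(P_n x)} \leq \omega_{\Phi, P_n x}(\norm{P_n x' - P_n x}) \leq \omega_{\Phi \circ P_n, x}(\norm{x' - x})$; and $\omega_{\Phi \circ P_n, x}(r) \to 0$ as $r \to 0$ because $C r \to 0$ and $\omega_{\Phi, P_n x}$ satisfies \eqref{eq:loc_mod_cty_2}. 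This is exactly local uniform continuity of $\Phi \circ P_n$ at $x$.

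For the second claim (continuous convergence), fix $x \in X$ and a neighbourhood $V$ of $\Phi(x)$ in $\eReals$; as $\Phi(x) \in \Reals$, I may assume $V \supseteq (\Phi(x) - \varepsilon, \Phi(x) + \varepsilon)$ for some $\varepsilon > 0$. Using an increasing local modulus $\omega_{\Phi, x}$ of $\Phi$ at $x$, choose $r_{0} > 0$ with $\omega_{\Phi, x}(r) < \varepsilon$ for all $0 \leq r \leq r_{0}$. The crucial estimate is the split
\[
    \norm{P_n x' - x} \leq \norm{P_n x' - P_n x} + \norm{P_n x - x} \leq C \norm{x' - x} + \norm{P_n x - x} ,
\]
whose first summand controls the variation over $x'$ uniformly in $n$ via $C$, while the second is controlled at the single point $x$ by \eqref{eq:convergence_of_projections}. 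Accordingly I would choose $N$ so that $\norm{P_n x - x} < r_{0}/2$ for all $n \geq N$, and set $U \defeq \cBall{x}{r_{0}/(2C)}$; then $n \geq N$ and $x' \in U$ force $\norm{P_n x' - x} < r_{0}$, whence $\absval{\Phi(P_n x') - \Phi(x)} \leq \omega_{\Phi, x}(\norm{P_n x' - x}) < \varepsilon$, i.e.\ $(\Phi \circ P_n)(x') \in V$. This is precisely the condition in \Cref{defn:continuous_convergence}.

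The argument is largely routine, and the one point that genuinely matters is conceptual rather than computational: continuous convergence, unlike mere pointwise convergence, demands uniform control of $P_n x'$ as $x'$ ranges over an entire neighbourhood of $x$ while $n \to \infty$ simultaneously. The triangle-inequality split above is exactly what delivers this, and it is the uniform operator bound $C$ (not just the pointwise convergence of the $P_n$) that provides the neighbourhood-uniform estimate, with \eqref{eq:convergence_of_projections} needed only at the fixed centre $x$. I note finally that the second claim uses only continuity of $\Phi$ at $x$, so full local uniform continuity is more than strictly necessary there; it is nonetheless the natural hypothesis to carry, both for the first claim and for compatibility with \Cref{lem:reweighted_OM_loc_unif_cts} and \Cref{thm:Gamma-convergence_of_posteriors}.
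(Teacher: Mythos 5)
Your proposal is correct and follows essentially the same route as the paper's proof: part~\ref{lem:continuous_convergence_potentials_lucts} by rescaling the modulus of continuity of $\Phi$ at $P_{n} x$ through the uniform operator bound, and part~\ref{lem:continuous_convergence_potentials_cvgt} via the identical triangle-inequality split $\norm{P_{n} x' - x} \leq C \norm{x' - x} + \norm{P_{n} x - x}$ with the same $r_{0}/2$ and $r_{0}/(2C)$ choices (the paper writes $M$ for your $C$ and $r_{\varepsilon}$ for your $r_{0}$). Your side remarks --- that the uniform bound also follows from the uniform boundedness principle, and that part~\ref{lem:continuous_convergence_potentials_cvgt} needs only continuity of $\Phi$ at $x$ --- are accurate but inessential additions.
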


\begin{proof}
	Let $M \geq 1$ be a uniform upper bound for the operator norms $\norm{ P_{n} }$, $n \in \Naturals$.
	(Note that, in the special case that $X$ is a separable Hilbert space with complete orthonormal system $\{ \psi_{n} \}_{n \in \Naturals}$ and $P_{n}$ is the orthogonal projection onto $\spn \{ \psi_{1}, \dots, \psi_{n} \}$, we may take $M = 1$.)

	To show \ref{lem:continuous_convergence_potentials_lucts}, fix $n \in \Naturals$ and $x \in X$ and let $\omega_{\Phi, P_{n} x} \colon [0, \infty) \to [0, \infty]$ be an increasing local modulus of continuity for $\Phi$ near $P_{n} x$.
	Then, for all $x' \in X$,
	\begin{align*}
		\absval{ (\Phi \circ P_{n}) (x') - (\Phi \circ P_{n}) (x) }
		\leq \omega_{\Phi, P_{n} x} \bigl( \norm{ P_{n} x' - P_{n} x } \bigr)
		\leq \omega_{\Phi, P_{n} x} \bigl( \norm{ P_{n} } \norm{ x' - x } \bigr) .
	\end{align*}
	Thus, $\omega_{\Phi \circ P_{n}, x} (r) \defeq \omega_{\Phi, P_{n} x} ( M r )$ is a local modulus of continuity for $\Phi \circ P_{n}$ near $x$.

	To establish \ref{lem:continuous_convergence_potentials_cvgt}, fix $x \in X$ and let $\omega_{\Phi, x} \colon [0, \infty) \to [0, \infty]$ be an increasing local modulus of continuity for $\Phi$ near $x$.
	Let $\varepsilon > 0$ be arbitrary and let $r_{\varepsilon} > 0$ be such that $\omega_{\Phi, x} (r_{\varepsilon}) < \varepsilon$.
	By \eqref{eq:convergence_of_projections}, there exists $N \in \Naturals$ such that, for all $n \geq N$, $\norm{ P_{n} x - x } < r_{\varepsilon} / 2$.
	Then, for $n \geq N$ and $x' \in X$ with $\norm{ x' - x } < r_{\varepsilon} / 2 M$,
	\begin{align*}
		\absval{ (\Phi \circ P_{n}) (x') - \Phi(x) }
		& \leq \omega_{\Phi, x} \bigl( \norm{ P_{n} x' - x } \bigr) \\
		& \leq \omega_{\Phi, x} \bigl( \norm{ P_{n} x' - P_{n} x } + \norm{ P_{n} x - x } \bigr) \\
		& \leq \omega_{\Phi, x} \bigl( M \norm{ x' - x } + \norm{ P_{n} x - x } \bigr) \\
		& \leq \omega_{\Phi, x} (r_{\varepsilon}) < \varepsilon ,
	\end{align*}
	as required.
\end{proof}

\section*{Acknowledgements}
\addcontentsline{toc}{section}{Acknowledgements}

BA and TJS are supported in part by the \ac{DFG} through project \href{https://gepris.dfg.de/gepris/projekt/415980428}{415980428}.
Portions of this work were completed during the employment of BA and TJS at the Freie Universit\"at Berlin and while guests of the Zuse Institute Berlin, and during the employment of IK at the Zuse Institute Berlin.
IK and TJS have been supported in part by the \ac{DFG} through projects TrU-2 and EF1-10 of the Berlin Mathematics Research Centre MATH+ (EXC-2046/1, project \href{https://gepris.dfg.de/gepris/projekt/390685689}{390685689}).
\change{The research of HCL has been partially funded by the \ac{DFG} --- Project-ID \href{https://gepris.dfg.de/gepris/projekt/318763901}{318763901} --- SFB1294.}
\change{The authors thank two anonymous peer reviewers for their helpful suggestions.}

\bibliographystyle{abbrvnat}
\bibliography{references}
\addcontentsline{toc}{section}{References}

\end{document}